\newcommand{\End}{\mathop{\mathrm{End}}}
\newcommand{\pd}{\mathop{\mathrm{pd}}}
\newcommand{\rad}{\mathop{\mathrm{rad}}\nolimits}
\newcommand{\Ind}{\mathop{\mathrm{Ind}}\nolimits}
\newcommand{\Coind}{\mathop{\mathrm{Coind}}\nolimits}
\newcommand{\J}{\mathrel{\mathscr J}} 
\newcommand{\R}{\mathrel{\mathscr R}} 
\newcommand{\eL}{\mathrel{\mathscr L}} 
\newcommand{\inv}{^{-1}}
\newcommand{\p}{\varphi}
\newcommand{\pinv}{{\p \inv}}
\newcommand{\ov}[1]{\ensuremath{\overline {#1}}}
\newcommand{\til}[1]{\ensuremath{\widetilde {#1}}}
\newcommand{\Hom}{\mathop{\mathrm{Hom}}\nolimits}
\newcommand{\Tor}{\mathop{\mathrm{Tor}}\nolimits}
\newcommand{\Ext}{\mathop{\mathrm{Ext}}\nolimits}
\newtheorem{Thm}{Theorem}[section]
\newtheorem{Prop}[Thm]{Proposition}
\newtheorem{Lemma}[Thm]{Lemma}
{\theoremstyle{definition}
}
{\theoremstyle{remark}
\newtheorem{Rmk}[Thm]{Remark}}
\newtheorem{Cor}[Thm]{Corollary}
{\theoremstyle{remark}
}
{\theoremstyle{remark}
}
\theoremstyle{remark}
\theoremstyle{remark}
\theoremstyle{remark}
\theoremstyle{remark}
\newtheorem*{Claim*}{Claim}}
\numberwithin{equation}{section}
\title{Topology and  monoid representations~I:  Foundations}
\author{Benjamin Steinberg}
\address[B.~Steinberg]{%
    Department of Mathematics\\
    City College of New York\\
    Convent Avenue at 138th Street\\
    New York, New York 10031\\
    USA}
\email{bsteinberg@ccny.cuny.edu}
\thanks{The author was supported by a PSC CUNY grant, a Simons Foundation Collaboration Grant, award number 849561, and the Australian Research Council Grant DP230103184.}
\date{\today}
\keywords{Representations of monoids, topology, classifying spaces, group completion, quivers, homological epimorphisms}
\subjclass[2020]{20M25,20M30, 20M50,  16G99, 05E10, 05E45}
\begin{document}

\begin{abstract}
This paper aims to use topological methods to compute $\mathrm{Ext}$ between an irreducible representation of a finite monoid inflated from its group completion and one inflated from its group of units, or more generally coinduced from a maximal subgroup, via a spectral sequence that collapses on the $E_2$-page over fields of good characteristic. 
 As an application, we determine the global dimension of the algebra of the monoid of all affine transformations of a vector space over a finite field.  We provide a topological characterization of when a monoid homomorphism induces a homological epimorphism of monoid algebras and apply it to semidirect products. Topology is used to construct projective resolutions of modules inflated from the group completion for sufficiently nice monoids. A sequel paper will use these results to study the representation theory Hsiao's monoid of ordered $G$-partitions (connected to the Mantaci-Reutenauer descent algebra for the wreath product $G\wr S_n$).
\end{abstract}
\maketitle
\section{Introduction}
The cohomology of a group $G$ with coefficients in a $KG$-module $V$ can be computed as the cohomology of an Eilenberg-Mac Lane space for $G$ with respect to a system of local coefficients. If $K$ is a field, then $\mathrm{Ext}$ between any two $KG$-modules can be computed as group cohomology and hence topologically.  For monoids the situation is quite different.  The cohomology of a monoid $M$ with coefficients in a $KM$-module $V$ can be computed as the cohomology of its classifying space with respect to a system of local coefficients only when $M$ acts by invertible maps on $V$, that is, when $V$ is a $KG(M)$-module where $G(M)$ is the group completion of $M$.  Moreover, even when $K$ is a field, there is in general no hope to compute $\Ext$ between arbitrary modules as monoid cohomology.  The global dimension of $KM$ can be much larger than the projective dimension of the trivial $KM$-module. We remark here that by the cohomology of a monoid, we mean the classical Eilenberg-Mac Lane cohomology.  There is a more sophisticated cohomology theory for monoids, due to Leech~\cite{Leechcohom}, that provides better structural information, but our concern is primarily with the monoid algebra $KM$ and Leech coefficients systems are more general than $KM$-modules, whereas Eilenberg-Mac Lane cohomology has coefficients which are $KM$-modules.

Nonetheless, the author together with Margolis and Saliola showed in~\cite{MSS} that topological methods can be used to compute $\Ext$ between certain simple modules for finite monoids.  If $M$ is a finite monoid, then in addition to the standard trivial module $K$, there is a second ``trivial'' module with underlying vector space $K$ on which invertible elements of $M$ act trivially and noninvertible elements of $M$ act as $0$.  In the case that $M$ has a trivial group of units, we showed that $\Ext$ between these two modules can be computed as the reduced cohomology of a certain infinite dimensional CW complex, up to a dimension shift.  Moreover, if $M$ is von Neumann regular, we showed that this CW complex can be replaced by the order complex of the (finite) poset of proper principal right ideals of $M$.  We then used this result to give a topological description of $\Ext$ between arbitrary simple modules for the algebra of a  left regular band over a field (this was developed further in~\cite{ourmemoirs}).  The representation theory of left regular bands had been used earlier to analyze a number of important Markov chains~\cite{BHR,DiaconisBrown1,Brown1,bjorner2} and is connected with Solomon's descent algebra~\cite{SolomonDescent,BidigareThesis,Brown2,SaliolaDescent}.

Recently, Khovanov \emph{et al.}~\cite{Khovanov} studied $\Ext^1$ between these two trivial modules for an arbitrary finite monoid $M$.  Although they use a different language and state a weaker result~\cite[Lemma~2B.5]{Khovanov}, an analysis of their proof (or some of the proofs in~\cite{DO}) shows that $\Ext^1$ can be interpreted as the $G$-invariants of the degree $0$ reduced cohomology of the order complex of the poset of proper principal right ideals of $M$, where $G$ denotes the group of units of $M$, which acts simplicially on this order complex;  see~\cite[Remark~2B.12]{Khovanov}.    In this paper, we will show that arbitrary $\Ext$ between these two modules can be interpreted as the relative cohomology of a CW pair.

The main results of this paper improve dramatically on the results of our previous work with Margolis and Saliola~\cite{MSS,ourmemoirs}, which were restricted to an important, but quite special class of monoids.  Here,
we are able to compute $\Ext$ between $KM$-modules $V,W$ using topological methods under the hypothesis that $V$ is inflated from the group completion of $M$ and $W$ is inflated from the group of units $G$ of $M$ (or more generally when $W$ is induced or coinduced from a maximal subgroup $G$). Our main result is the existence of a spectral sequence relating $\Ext_{KM}^n(V,W)$ to the homology of a certain $G$-CW complex.  If the characteristic  of $K$ does not divide $|G|$, the spectral sequence collapses on the $E_2$-page on the line $p=0$, and then the computations boil down to decomposing homology representations of $G$ coming from a $G$-CW complex.  In the case that $M$ is regular, the homology representations come from the simplicial action of $G$ on the order complex of a subposet of the poset of principal right ideals.

We also introduce topological techniques to determine whether a monoid homomorphism $\p\colon M\to N$ induces a homological  epimorphism $KM\to KN$ of monoid algebras.  A homological epimorphism is roughly speaking a ring epimorphism  $R\to S$ for which $\Ext$ and $\Tor$ of $S$-modules are the same whether taken over $R$ or $S$.  In particular, we characterize when a semidirect product projection $M\rtimes N\to N$ induces a homological epimorphism in terms of the topology of the classifying space of $M$.

We give several applications of these results in this paper.  Further applications will appear in a sequel paper, where we generalize all of the results of~\cite{MSS,ourmemoirs} to left regular bands of groups via topological means.

We use our topological methods to compute the global dimension of the algebra of the monoid of all affine transformations of an $n$-dimensional vector space over a finite field.  A key role is played by the nontrivial results of Okni\'nski and Putcha~\cite{putchasemisimple} and of Kov\'acs~\cite{Kovacs} on the representation theory of the monoid of $n\times n$ matrices over a finite field, and also by Solomon's work on the analogue of the Steinberg representation for the affine general linear group~\cite{SolomonAffine,SolomonAffineBruhat}.  We also compute the monoid cohomology of any simple module for the affine monoid over the field of complex numbers.

A number of our results work over more general coefficient rings than fields and for more general monoids than finite ones.  Therefore, we impose these assumptions only when necessary. The reader interested in only the case of fields and finite monoids can then skip arguments that trivialize under these assumptions.

The paper is structured as follows.  We begin with a section recalling some homological results that we shall apply a number of times.  Then in Section~\ref{s:sec3} we recall basic notions concerning simplicial sets and classifying spaces of categories before applying the techniques to monoid actions, expanding on results of Nunes~\cite{Nunes} and Margolis, Saliola and the author~\cite{MSS}. Section~\ref{s:sec4} proves our main result concerning $\Ext$ between modules inflated from the group completion and modules induced or coinduced from a maximal subgroup and provides the topological characterization of homological epimorphisms.  We apply the results to determine when the projection from a semidirect product (or more generally a crossed product) induces a homological epimorphism.
The next section, computes the global dimension of the algebra of the monoid of all affine transformations of a vector space over a finite field and computes arbitrary $\Ext$ between simple modules from  two families.  We end with a section computing resolutions of modules inflated from the group completion of a regular monoid by modules which are filtered by standard modules with respect to the canonical quasi-hereditary algebra structure on the algebra of a regular monoid (in good characteristic).  The chain complex is, in fact, the simplicial chain complex of the poset of principal right ideals.  For certain monoids, like the monoid of affine transformations, all standard modules are projective, and so this yields a projective resolution.

\section{The Adams-Rieffel theorem and monoid cohomology}
The following result was proved by Adams and Rieffel~\cite{AdamsRieffel}, in part motivated by monoid cohomology.  It generalizes the classical Eckmann-Shapiro lemma.  We sketch the proof for completeness, as the result is fundamental for us.  Also we state it in the context of $K$-linear abelian categories.

\begin{Thm}\label{t:adams.rieffel}
Let $K$ be a commutative ring and $\mathscr A,\mathscr B$ $K$-linear abelian categories such that $\mathscr B$ has enough injectives.  Suppose that $F\colon \mathscr A\to \mathscr B$ and $G\colon \mathscr B\to \mathscr A$ are $K$-linear adjoint functors such that $\Hom(F(-),-)\cong \Hom (-,G(-))$ and $F,G$ are exact.  Then \[\mathrm{Ext}^n(F(-),-)\cong \mathrm{Ext}^n(-,G(-))\] as $K$-modules for all $n\geq 0$.
\end{Thm}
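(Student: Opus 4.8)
The plan is to mimic the standard proof of the Eckmann–Shapiro lemma: build a resolution in $\mathscr B$, apply the hypotheses to transport it to a resolution in $\mathscr A$, and then use the natural isomorphism of Hom-bifunctors to identify the two $\Ext$-complexes. Fix objects $A \in \mathscr A$ and $B \in \mathscr B$. Since $\mathscr B$ has enough injectives, choose an injective resolution $0 \to B \to I^\bullet$ in $\mathscr B$. I would then argue that $\Ext^n(F(A),B)$ is computed as the cohomology of the complex $\Hom_{\mathscr B}(F(A), I^\bullet)$; this is the definition of $\Ext$ via an injective resolution of the second variable, valid because $\mathscr B$ has enough injectives and $\Ext$ can be computed from either an injective resolution of the target or a projective resolution of the source (or, if $\mathscr A$ lacks enough projectives, one takes the injective-resolution definition as primary and invokes the balancing only on the side where resolutions exist).

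**Next I would** apply $G$ to the resolution $0 \to B \to I^\bullet$. Because $G$ is exact, the complex $0 \to G(B) \to G(I^\bullet)$ is again exact, i.e.\ $G(I^\bullet)$ is a resolution of $G(B)$ in $\mathscr A$. The key point is that each $G(I^k)$ is $\Hom_{\mathscr A}(-, G(I^k))$-acyclic in the appropriate sense: for this I would use the adjunction isomorphism $\Hom_{\mathscr A}(-, G(-)) \cong \Hom_{\mathscr B}(F(-), -)$ together with exactness of $F$. Indeed, for any object $X \in \mathscr A$, $\Hom_{\mathscr A}(X, G(I^k)) \cong \Hom_{\mathscr B}(F(X), I^k)$, and since $F$ is exact and $I^k$ is injective in $\mathscr B$, the functor $X \mapsto \Hom_{\mathscr B}(F(X), I^k)$ is exact; hence $G(I^k)$ behaves like an injective object for the purpose of computing $\Ext$ — more precisely, each $G(I^k)$ is acyclic for the derived functors of $\Hom_{\mathscr A}(A, -)$. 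Therefore $\Ext^n_{\mathscr A}(A, G(B))$ may be computed as the cohomology of $\Hom_{\mathscr A}(A, G(I^\bullet))$.

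**Finally,** I would invoke the natural isomorphism of bifunctors once more, now in the first variable, to get
\[
\Hom_{\mathscr A}(A, G(I^\bullet)) \cong \Hom_{\mathscr B}(F(A), I^\bullet)
\]
as cochain complexes of $K$-modules (naturality in the $\mathscr B$-variable ensures this is an isomorphism of complexes, not merely of graded modules). Taking cohomology of both sides yields $\Ext^n_{\mathscr A}(A, G(B)) \cong \Ext^n_{\mathscr B}(F(A), B)$ for all $n \geq 0$. Naturality of all the isomorphisms involved in $A$ and $B$ gives the isomorphism of functors as claimed.

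**The main obstacle** — really the only subtle point — is justifying that $G(I^\bullet)$ is a legitimate complex for computing $\Ext^n_{\mathscr A}(A,-)$, since $G(I^k)$ need not be injective in $\mathscr A$ and $\mathscr A$ is not assumed to have enough injectives. The resolution is by $\Hom_{\mathscr A}(A,-)$-acyclic objects, so one must appeal to the fact that right derived functors can be computed using any acyclic (not necessarily injective) resolution; alternatively, if $\mathscr A$ does have enough injectives one embeds and uses a comparison/dimension-shifting argument, but the cleanest route is the acyclicity statement. I would also need to be slightly careful about which definition of $\Ext$ is being used on the $\mathscr A$-side when projectives or injectives may be scarce; stating at the outset that $\Ext$ in $\mathscr A$ is taken as the derived functor of $\Hom_{\mathscr A}(A,-)$ computed via acyclic resolutions sidesteps this. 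Everything else is formal manipulation with the adjunction and exactness hypotheses.
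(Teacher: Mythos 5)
Your proof is correct and follows essentially the same route as the paper: resolve $B$ injectively in $\mathscr B$, push the resolution through the exact functor $G$, and use the adjunction to identify the two Hom-complexes. The one simplification you missed is that the exactness of $X\mapsto\Hom_{\mathscr B}(F(X),I^k)\cong\Hom_{\mathscr A}(X,G(I^k))$, which you yourself establish, is precisely the statement that $G(I^k)$ \emph{is} injective in $\mathscr A$; so the ``main obstacle'' about acyclic versus injective resolutions dissolves --- $G$ preserves injectives and $G(I^\bullet)$ is an honest injective resolution of $G(B)$, which is exactly how the paper argues.
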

\begin{proof}
First note that if $I$ is an injective object in $\mathscr B$, then $\Hom(-,G(I))\cong \Hom(F(-),I)$ is exact, as $F$ is exact and $I$ is injective. Therefore, $G(I)$ is injective, that is, $G$ preserves injectives.  Thus if $B\to I_\bullet$ is a injective resolution of $B\in \mathscr B$, then $G(B)\to G(I_{\bullet})$ is an injective resolution as $G$ is exact and preserves injectives.  Thus if $A\in \mathscr A$, then $\Ext^n(A,G(B))\cong H^n(\Hom(A,G(I_{\bullet}))) \cong H^n(\Hom(F(A),I_{\bullet}))\cong \Ext^n(F(A),B)$.  This completes the proof.
\end{proof}

We consider two special cases, one of which appears in~\cite{AdamsRieffel}. These will play an important role in our sequel paper. 
If $e\in R$ is an idempotent with $eR=eRe$, then $eRe$ is a unital ring with identity $e$ and $\rho\colon R\to eR$ given by $\rho(r) = er$ is a surjective homomorphism fixing $eR$.   Hence every $eRe$-module $B$ can be viewed as an $R$-module via $rb=erb$ for $r\in R$ and $b\in B$ (and this agrees with the original action on $eRe$).   Also, if $A$ is an $R$-module, then $eA$ is an $eRe$-module by restricting the action.  Dual remarks apply if $Re=eRe$.

\begin{Cor}\label{c:rings.eRe}
Let $R$ be a $K$-algebra and $e\in R$. Let $A$ be an $R$-module and $B$ an $eRe$-module.
\begin{enumerate}
\item If $Re=eRe$,  then
$\Ext^n_{R}(B,A)\cong \Ext^n_{eRe}(B,eA)$ as $K$-modules.
\item If $eR=eRe$,  then $\Ext^n_R(A,B)\cong \Ext^n_{eRe}(eA,B)$ as $K$-modules.
\end{enumerate}
\end{Cor}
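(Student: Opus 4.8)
The plan is to derive both parts from the Adams--Rieffel theorem (Theorem~\ref{t:adams.rieffel}) by producing, in each case, a pair of $K$-linear exact adjoint functors between $\module{R}$ and $\module{eRe}$ whose adjunction isomorphism has the shape $\Hom(F(-),-)\cong\Hom(-,G(-))$ demanded there. Two observations do all the work. First, the map $\rho\colon R\to eRe$ --- sending $r\mapsto er$ when $eR=eRe$, and $r\mapsto re$ when $Re=eRe$ --- is a surjective unital $K$-algebra homomorphism: when $eR=eRe$ one has $er_1\in eR=eRe$, hence $er_1=er_1e$, so $\rho(r_1)\rho(r_2)=er_1er_2=er_1r_2=\rho(r_1r_2)$, while $\rho(1)=e=1_{eRe}$, and the other case is dual. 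Consequently restriction of scalars along $\rho$ is an exact $K$-linear functor, and it is precisely the ``inflation'' of $eRe$-modules to $R$-modules described just before the statement. Second, the functor $A\mapsto eA$ from $\module{R}$ to $\module{eRe}$ is also exact, since $eR$ (resp.\ $Re$) is a direct summand of $R$ as a right (resp.\ left) $R$-module by the Peirce decomposition $R=eR\oplus(1-e)R$ (resp.\ $R=Re\oplus R(1-e)$), together with the natural isomorphism $eA\cong eR\otimes_R A$ (resp.\ $eA\cong\Hom_R(Re,A)$).

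For part~(2), assuming $eR=eRe$, I would take $F=e(-)\colon\module{R}\to\module{eRe}$ and let $G$ be restriction along $\rho$. The isomorphism $eA\cong eR\otimes_R A$ (via $er\otimes a\mapsto era$) identifies $F$ with extension of scalars along $\rho$ --- here one uses $eR=eRe$ to see $(eRe)_\rho\cong eR$ as right $R$-modules --- so $F$ is left adjoint to $G$ and $\Hom_{eRe}(eA,B)\cong\Hom_R(A,G(B))$ naturally. Both functors are exact and $\module{eRe}$ has enough injectives, so Theorem~\ref{t:adams.rieffel} yields $\Ext^n_{eRe}(eA,B)\cong\Ext^n_R(A,G(B))$; since $G(B)$ is exactly ``$B$ viewed as an $R$-module'' in the sense of the statement, this is the assertion.

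Part~(1) is the mirror image: assuming $Re=eRe$, I would take $F$ to be restriction along $\rho\colon\module{eRe}\to\module{R}$, whose right adjoint is $\Hom_R({}_\rho(eRe),-)$; identifying ${}_\rho(eRe)\cong Re$ as left $R$-modules (using $Re=eRe$) and $\Hom_R(Re,-)\cong e(-)$ (evaluation at $e$), this right adjoint is $G=e(-)\colon\module{R}\to\module{eRe}$. Then $\Hom_R(F(B),A)\cong\Hom_{eRe}(B,eA)$ naturally, which is the required shape; both functors are exact and $\module{R}$ has enough injectives, so Theorem~\ref{t:adams.rieffel} gives $\Ext^n_R(F(B),A)\cong\Ext^n_{eRe}(B,eA)$, i.e.\ $\Ext^n_R(B,A)\cong\Ext^n_{eRe}(B,eA)$ with $B$ inflated along $\rho$. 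The only delicate point I anticipate is bookkeeping: verifying that $\rho$ is well defined and unital (the one place where the hypotheses $eR=eRe$ and $Re=eRe$ are genuinely used), matching the two descriptions of $A\mapsto eA$ with extension and coextension of scalars along $\rho$, and orienting each adjunction so that it fits the $\Hom(F(-),-)\cong\Hom(-,G(-))$ template of Theorem~\ref{t:adams.rieffel}. No step is genuinely hard once the functors are correctly set up.
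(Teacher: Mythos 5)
Your proposal is correct and follows essentially the same route as the paper: in each case it pairs the exact functor $A\mapsto eA$ (identified with $eR\otimes_R-$ or $\Hom_R(Re,-)$) with the exact inflation/restriction along the retraction $R\to eRe$, checks the adjunction has the shape required by Theorem~\ref{t:adams.rieffel}, and applies that theorem. The only cosmetic difference is that you describe the second functor as restriction of scalars along the explicit homomorphism $\rho$ and then match it with (co)extension of scalars, whereas the paper identifies it directly with $Re\otimes_{eRe}-$ resp.\ $\Hom_{eRe}(eR,-)$ using freeness of $eRe$ over itself; the content is identical.
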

\begin{proof}
Assume first that $Re=eRe$.   The functor $A\mapsto eA$ from $R$-modules to $eRe$-modules is isomorphic to $\Hom_R(Re, -)$ and is hence exact because $Re$ is projective. It thus has left adjoint $Re\otimes_{eRe} -$.  Since $Re=eRe$ is a free $eRe$-module on one generator, this left adjoint is exact and is isomorphic to the inclusion of the category of $eRe$-modules into $R$-modules coming from the retraction $R\to Re$.  The Adams-Rieffel theorem then yields $\Ext^n_R(B,A)\cong \Ext^n_{eRe}(B,eA)$ for all $n\geq 0$.

Dually, if $eR=eRe$, then the exact functor $A\mapsto eA$ is isomorphic to $eR\otimes_R -$ and so has right adjoint $\Hom_{eRe}(eR,-)$.  But since $eR=eRe$, it is a free $eRe$-module on one generator, and so $\Hom_{eRe}(eR,-)$ is exact and is isomorphic to the inclusion of the category of $eRe$-modules into that of $R$-modules coming from the retraction $R\to eR$.  Thus, by the Adams-Rieffel theorem, we have that $\Ext^n_{eRe}(eA,B)\cong \Ext^n_R(A,B)$.
\end{proof}

If $K$ is a commutative ring and $M$ is a monoid, then $KM$ denotes the monoid algebra of $M$ over $K$.  We shall use $K$ to denote the trivial $KM$-module.  The cohomology of $M$ with respect to a $KM$-module $V$ is defined by $H^n(M,V) = \Ext^n_{KM}(K,V)$. Monoid homology for a right $KM$-module $V$ is defined dually by $H_n(M,V) = \Tor_n^{KM}(V,K)$.  We remark that if $K$ is an $L$-algebra, then, for any $KM$-module $V$, one has that $\Ext^n_{KM}(K,V)\cong \Ext^n_{LM}(L,V)$ as $L$-modules, and so $H^n(M,V)$ is unambiguous, and similarly for $H_n(M,V)$ for a right $KM$-module.  This follows from the standard bar resolutions used to compute monoid (co)homology (cf. Corollary~\ref{c:standard.res} below).

The first item of the following corollary is in~\cite{AdamsRieffel} and the second is essentially in~\cite{rrbg}.  See also~\cite{Nicocohom1}.

\begin{Cor}\label{c:monoid.er}
Let $M$ be a monoid, $K$ a commutative ring and $e\in M$ an idempotent.
\begin{enumerate}
\item If $Me=eMe$, then $H^n(M,A)\cong H^n(eMe,eA)$ for any $KM$-module $A$.
\item If $eM=eMe$, then $H^n(M,A)\cong H^n(eMe,A)$ for any $K[eMe]$-module $A$.
\end{enumerate}
\end{Cor}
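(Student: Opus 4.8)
The plan is to reduce both statements to Corollary~\ref{c:rings.eRe} applied to the $K$-algebra $R=KM$ and the idempotent $e\in M\subseteq KM$, so the real work is two preliminary identifications. First, the set $\{eme:m\in M\}$ is precisely the subset $eMe$ of the $K$-basis $M$ of $KM$; hence $eMe$ is a $K$-basis of $e(KM)e$, and since the product of basis elements $(eme)(em'e)=emem'e$ is the monoid product in $eMe$, one gets a canonical identification $e(KM)e\cong K[eMe]$ of $K$-algebras with identity $e$. Second, the monoid hypotheses translate to the ring hypotheses of Corollary~\ref{c:rings.eRe}: if $Me=eMe$ then every $m\in M$ has $me\in eMe$, whence $me=e(me)=eme$, and expanding a general element of $(KM)e$ shows $(KM)e\subseteq e(KM)e$, so $(KM)e=e(KM)e$; dually, $eM=eMe$ forces $em=eme$ for all $m$ and hence $e(KM)=e(KM)e$.

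For part~(1), I would apply Corollary~\ref{c:rings.eRe}(1), taking its $eRe$-module to be the trivial $K[eMe]$-module $K$ and its $R$-module to be the given $KM$-module $A$; this yields $\Ext^n_{KM}(K,A)\cong\Ext^n_{K[eMe]}(K,eA)$. To see that this reads $H^n(M,A)\cong H^n(eMe,eA)$, the one point to verify is that $K$, regarded as a $KM$-module via the retraction $KM\to(KM)e$, $r\mapsto re$, is the trivial $KM$-module: for $m\in M$ the induced action is $m\cdot 1=1$, because $me=eme$ is sent to $1$ by the augmentation of $K[eMe]$. Part~(2) is dual: in Corollary~\ref{c:rings.eRe}(2) take the $R$-module to be the trivial $KM$-module $K$ and the $eRe$-module to be the given $K[eMe]$-module $A$; this gives $\Ext^n_{KM}(K,A)\cong\Ext^n_{K[eMe]}(eK,A)$, and $eK=K$ is the trivial $K[eMe]$-module since $e$ acts as the identity on the trivial $KM$-module. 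Hence $H^n(M,A)\cong H^n(eMe,A)$.

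Once Corollary~\ref{c:rings.eRe} is in hand the argument is essentially bookkeeping, so I do not expect a genuine obstacle; the only places needing care are keeping straight, in each of the two cases, which of the operations ``inflation along $KM\to e(KM)e$'' and ``restriction $A\mapsto eA$'' is in play, and confirming that the two incarnations of the one-dimensional trivial module correspond under these operations. That identification of trivial modules, together with the algebra identification $e(KM)e\cong K[eMe]$, is really the entire content beyond the citation.
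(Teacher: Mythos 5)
Your proposal is correct and follows essentially the same route as the paper: both reduce to Corollary~\ref{c:rings.eRe} applied to $R=KM$ with the identification $e(KM)e\cong K[eMe]$, and both hinge on checking that the trivial $K[eMe]$-module inflates to the trivial $KM$-module (for part (1)) and that $eK$ is the trivial $K[eMe]$-module (for part (2)). You simply spell out in more detail the translation of the hypotheses $Me=eMe$ and $eM=eMe$ into their linearized forms, which the paper leaves implicit.
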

\begin{proof}
The first item follows from Corollary~\ref{c:rings.eRe}(1) after noting that the trivial $K[eMe]$-module inflates to the trivial $KM$-module.  The second item follows from Corollary~\ref{c:rings.eRe}(2) after noting that if $K$ is the trivial $KM$-module, then $eK$ is the trivial $K[eMe]$-module.
\end{proof}

The last homological result that we shall need is a well-known spectral sequence~\cite[Theorem~10.62]{Rotmanhom} and its corollary~\cite[Corollary~10.63]{Rotmanhom}.

\begin{Thm}\label{t:ext.dual.tor}
Let $R,S$ be $K$-algebras and suppose that $B$ is an $S$-$R$-bimodule with the same $K$-action on both sides (i.e., an $S\otimes_K R^{op}$-module).  Assume that $B,C$ satisfy $\Ext_S^i(B\otimes_R P,C)=0$ for all $i\geq 1$ whenever $P$ is a projective $R$-module.  Then there is a spectral sequence of $K$-modules
\[\Ext^p_S(\Tor_q^R(B,A),C)\Rightarrow_p \Ext^n_R(A,\Hom_S(B,C)).\]
In particular,  if $C$ is an injective $S$-module, then there is an isomorphism of $K$-modules
\[\Ext^n_R(A,\Hom_S(B,C))\cong \Hom_S(\Tor_n^{R}(B,A),C)\] natural in the $R$-module $A$.
\end{Thm}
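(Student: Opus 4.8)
The plan is to recognize the right-hand side as the derived functors of a composite and then build the spectral sequence from a double complex. The key identification is the tensor--hom adjunction: for a left $R$-module $A$ there is an isomorphism $\Hom_R(A,\Hom_S(B,C))\cong\Hom_S(B\otimes_R A,C)$, natural in $A$ and $K$-linear since $B$ carries the same $K$-action on both sides. Thus the functor $A\mapsto\Hom_R(A,\Hom_S(B,C))$ on $\module R$ is the composite of the right-exact functor $B\otimes_R-\colon\module R\to\module S$, whose left derived functors are $\Tor^R_\bullet(B,-)$, followed by the contravariant left-exact functor $\Hom_S(-,C)\colon\module S\to\module K$, whose right derived functors are $\Ext^\bullet_S(-,C)$. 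The hypothesis that $\Ext^i_S(B\otimes_R P,C)=0$ for all $i\geq1$ and all projective $R$-modules $P$ is precisely the statement that $B\otimes_R-$ sends projectives to $\Hom_S(-,C)$-acyclic objects, which is exactly the input needed to run the Grothendieck spectral sequence of this composite.

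Concretely, I would fix a projective resolution $P_\bullet\to A$ in $\module R$ and an injective resolution $C\to I^\bullet$ in $\module S$, and form the first-quadrant double complex $D^{p,q}=\Hom_S(B\otimes_R P_q,I^p)$; both differentials raise the total degree $p+q$, because $\Hom_S(-,-)$ is contravariant in the first and covariant in the second variable. Filtering so as to take cohomology in the $I$-direction first, for each fixed $q$ the relevant cohomology is $H^p(\Hom_S(B\otimes_R P_q,I^\bullet))=\Ext^p_S(B\otimes_R P_q,C)$, which vanishes for $p\geq1$ by hypothesis and equals $\Hom_S(B\otimes_R P_q,C)\cong\Hom_R(P_q,\Hom_S(B,C))$ for $p=0$; hence this spectral sequence collapses onto the line $p=0$, and taking cohomology in the remaining direction its abutment is $H^n$ of $\Hom_R(P_\bullet,\Hom_S(B,C))$, that is, $\Ext^n_R(A,\Hom_S(B,C))$. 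Filtering the other way, taking cohomology in the $P$-direction first, each $I^p$ is injective in $\module S$, so $\Hom_S(-,I^p)$ is exact and commutes with homology, giving $\Hom_S(H_q(B\otimes_R P_\bullet),I^p)=\Hom_S(\Tor^R_q(B,A),I^p)$; taking cohomology in the remaining direction yields $E_2^{p,q}=\Ext^p_S(\Tor^R_q(B,A),C)$. Both spectral sequences converge to the cohomology of $\mathrm{Tot}(D)$, which is exactly the asserted spectral sequence $\Ext^p_S(\Tor^R_q(B,A),C)\Rightarrow_p\Ext^n_R(A,\Hom_S(B,C))$.

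For the final statement, take $C$ injective in $\module S$. Then the standing hypothesis holds automatically, and $\Ext^p_S(-,C)=0$ for $p\geq1$, so the $E_2$-page above is concentrated on the line $p=0$ and the spectral sequence degenerates, giving $\Ext^n_R(A,\Hom_S(B,C))\cong E_2^{0,n}=\Hom_S(\Tor^R_n(B,A),C)$. Equivalently, and without invoking the spectral sequence: since $\Hom_S(-,C)$ is exact it commutes with homology, so $\Ext^n_R(A,\Hom_S(B,C))=H^n(\Hom_R(P_\bullet,\Hom_S(B,C)))=H^n(\Hom_S(B\otimes_R P_\bullet,C))=\Hom_S(H_n(B\otimes_R P_\bullet),C)=\Hom_S(\Tor^R_n(B,A),C)$, and naturality in $A$ is clear because every step is functorial in the chosen projective resolution, which is itself functorial up to homotopy.

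The work here is bookkeeping rather than conceptual. The main point to get right is the variance: because $\Hom_S(-,C)$ is contravariant in its first argument, the double complex built from a chain resolution of $A$ and a cochain resolution of $C$ really is first-quadrant, so both filtrations give convergent spectral sequences and there is no sign or quadrant subtlety. The second point to check carefully is that the hypothesis as stated---acyclicity of $B\otimes_R P$ only for \emph{projective} $P$, not for all $P$---suffices, which it does precisely because the collapsing filtration only ever evaluates $\Ext^\bullet_S(B\otimes_R-,C)$ on the terms $P_q$ of a projective resolution. A minor point is to confirm that the adjunction isomorphism $\Hom_R(P_q,\Hom_S(B,C))\cong\Hom_S(B\otimes_R P_q,C)$ is natural in $P_q$, so that applying it degreewise is an isomorphism of cochain complexes.
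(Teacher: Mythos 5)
Your proof is correct and is essentially the standard argument: the paper does not prove this result but cites Rotman (Theorem 10.62 and Corollary 10.63), and the double-complex proof you give — collapsing one filtration via the acyclicity hypothesis on $B\otimes_R P_q$ and reading off $\Ext^p_S(\Tor^R_q(B,A),C)$ from the other using exactness of $\Hom_S(-,I^p)$ — is exactly the proof found there. Your variance and convergence checks, and the direct verification of the degenerate case when $C$ is injective, are all sound.
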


\section{Simplicial sets, classifying spaces and monoid actions}\label{s:sec3}

We consider here some basic tools from topology and category theory and apply them to monoid actions.  First of all, we adopt the convention that $\til H_{-1}(\emptyset,K)=K=\til H^{-1}(\emptyset,K)$ and $\til H_n(\emptyset, K)=0=\til H^n(\emptyset,K)$ for  $n\geq 0$. Of course, $\til H_{-1}(X,K)=0=\til H^{-1}(X,K)$ if $X\neq \emptyset$.  This corresponds to allowing singular $(-1)$-simplices  $\emptyset\to X$ for a space $X$.  The advantage of this convention is that the reduced version of the long exact sequence in relative (co)homology works for any pair of spaces $(X,A)$, including $A=\emptyset$.

\subsection{Simplicial sets}
We briefly recall some basic facts about simplicial sets and their geometric realizations.   Details can be found in~\cite{may,jardine}.

If $n\geq 0$, put $[n]=\{0,\ldots, n\}$.  The \emph{simplex category} $\Delta$ is the category with objects $[n]$ with $n\geq 0$ and arrows the order-preserving maps. A \emph{simplicial set} is a contravariant functor $X$ from $\Delta$ to the category of sets.   The simplicial set $X$ is determined by the data $X_n=X([n])$, the face maps  $d_i\colon X_n\to X_{n-1}$ for $i=0,\ldots,n$ and the degeneracy maps $s_i\colon X_n\to X_{n+1}$ for $i=0,\ldots, n$ corresponding to the images under $X$ of the injective mapping $[n-1]\to [n]$ missing $i$ from its image and the surjective mapping $[n+1]\to [n]$ collapsing $i,i+1$, respectively. These maps, of course, must satisfy certain identities.   Elements of $X_n$ are called \emph{$n$-simplices}.

A morphism of simplicial sets, called a \emph{simplicial mapping}, is just a natural transformation $f\colon X\to X'$ of functors.  Equivalently, it is collection of mappings $f_n\colon X_n\to X_n'$ that commute with the face and degeneracy maps.

An order-preserving mapping $[k]\to [n]$ induces a simplicial mapping of simplicial complexes $\Delta_k\to \Delta_n$ where $\Delta_q$ is the standard $q$-simplex.  There results a covariant functor $F$ from $\Delta$ to the category of compactly generated Hausdorff spaces.  If we view each set as a discrete compactly generated Hausdorff space, then we can take the tensor product (or Kan extension) of $F$ with a simplicial set $X$ to obtain a compactly generated Hausdorff space $|X|$ known as the \emph{geometric realization} of $X$.

Concretely, \[|X|=\left(\coprod_{n\geq 0} X_n\times \Delta_n\right)/{\sim}\] where $\sim$ is the least equivalence relation with $(X(f)(\sigma),t) \sim (\sigma,F(f)(t))$ for $f\colon [q]\to [n]$ order preserving, $\sigma\in X_n$ and $t\in \Delta_q$.  It is well known that $|X|$ is a CW complex whose closed $q$-cells are given by the images in $|X|$ of the $\{\sigma\}\times \Delta_q$ with $\sigma$ a non-degenerate $q$-simplex, that is, a simplex not in the image of any degeneracy.
Geometric realization is functorial with respect to simplicial mappings.

If $X$ is a simplicial set and $K$ is a commutative ring, one can build a chain complex $M_\bullet(X,K)$ of $K$-modules, called the \emph{Moore complex}, with  $M_n(X,K) = KX_n$ and with boundary map $\partial_n\colon KX_n\to KX_{n-1}$ given by $\partial(\sigma) = \sum_{i=0}^n (-1)^id_i(\sigma)$ for an $n$-simplex $\sigma$.  Let $D_n(X,K)$ be the submodule spanned by the degenerate $n$-simplices.  Then $D_\bullet(X,K)$ is a subcomplex of $M_\bullet(X,K)$ and $C_\bullet(X,K)=M_\bullet(X,K)/D_\bullet(X,K)$ is called the \emph{normalized chain complex} of $X$ with coefficients in $K$.  One then has that $C_\bullet(X,K)$ is isomorphic to the cellular chain complex of $|X|$ with coefficients in $K$ with respect to the cell structure described above. In particular, $H_n(C_\bullet(X,K))\cong H_n(|X|,K)$.  Also it is known that the projection $M_\bullet(X,K)\to C_\bullet(X,K)$ induces an isomorphism on homology.

Similarly, one can define a normalized cochain complex $C^\bullet(X,K)$ with $C^n(X,K)$ consisting of those mappings $f\colon X_n\to K$ vanishing on degenerate simplices and with coboundary the dual of the boundary map above.  One can identify $C^\bullet(X,K)$ with the cellular cochain complex of $|X|$ and hence $H^n(|X|,K)\cong H^n(C^\bullet(X,K))$.

\subsection{Classifying spaces of categories}
There is a simplicial set $\mathcal N\mathscr C$ associated to any category $\mathscr C$ called the \emph{nerve} of $\mathscr C$.  The geometric realization of $\mathcal N\mathscr C$ is denoted $\mathcal B\mathscr C$ and is called the \emph{classifying space} of $\mathscr C$.  See~\cite{GSegal,Quillen,Rosenberg} for details.  We shall sometimes call a $q$-simplex of $\mathcal N\mathscr C$ a $q$-simplex of $\mathcal B\mathscr C$.  Since a monoid $M$ can be viewed as a category with one object, we can also consider the classifying space $\mathcal BM$ of $M$.

Associated to any poset $P$ is a category with object set $P$ and a unique arrow $(p,q)$ whenever $p\leq q$ with the composition $(p,q)(r,p)=(r,q)$ for $r\leq p\leq q$; we shall, abusing notation, also denote this category by $P$.  Functors between posetal categories correspond precisely to order-preserving maps of posets.

 Since $[n]$ is a poset for each $n\geq 0$, given a fixed category $\mathscr C$, we have a simplicial set $\mathcal N\mathscr C$, given by $[n]\mapsto \Hom([n],\mathscr C)$, i.e.,  obtained by composing the inclusion of $\Delta$ into the category of categories with the hom functor into $\mathscr C$.  Concretely, a $0$-simplex of $\mathcal N\mathscr C$ is an object of $\mathscr C$ and a $q$-simplex of $\mathcal N\mathscr C$, for $q\geq 1$, is a sequence \[c_0\xrightarrow{\,f_1\,} c_1\xrightarrow{\,f_2\,}c_2\longrightarrow \cdots \longrightarrow c_{q-1}\xrightarrow{\,f_q\,}c_q\] of composable arrows, which we denote $(f_q,\ldots, f_1)$.  The face map $d_i$ omits the object $c_i$ and if $1\leq i\leq q-1$ composes $f_i$ and $f_{i+1}$.  The degeneracy map $s_i$  inserts the identity map $1_{c_i}$ at $c_i$.  The degenerate simplices are precisely those with some entry an identity.

For example, if $P$ is a poset, then a $q$-simplex corresponds to a sequence $p_0\leq p_1\leq\cdots\leq p_q$ of elements of $P$ and the $q$-simplex is non-degenerate if and only if the chain is strict.   From this, it is easy to see that any face of a non-degenerate simplex of $\mathcal NP$ is non-degenerate, a simplex is determined by its vertices and  $\mathcal BP$ is isomorphic as a CW complex to the order complex $\Delta(P)$  of $P$, that is,  the simplicial complex with vertex set $P$ and simplices the finite chains in $P$.

The nerve construction is functorial, taking a functor between categories to a simplicial map.  Indeed, if $F\colon \mathscr C\to \mathscr D$ and $(f_q,\ldots,f_1)$ is a $q$-simplex of $\mathcal N\mathscr C$, then one has that $(F(f_q),\ldots, F(f_1))$ is a $q$-simplex of $\mathcal N\mathscr D$.  The geometric realization of this simplicial mapping is denoted $\mathcal BF$.
 It is well known that $\mathcal B\mathscr C\cong \mathcal B\mathscr C^{op}$ for any category $\mathscr C$.

A key result of Segal~\cite{GSegal} is the following (cf.~\cite[Lemma~5.3.17]{Rosenberg}).

\begin{Lemma}[Segal]\label{l:SegalLemma}
If $F,G\colon \mathscr C\to \mathscr D$ are functors between small categories and there is a natural transformation $F\Rightarrow G$, then $\mathcal BF$ and $\mathcal BG$ are homotopic.
\end{Lemma}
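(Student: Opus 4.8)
The plan is to repackage the natural transformation as a single functor out of a product category, and then transport it to spaces using the fact that the classifying space functor turns products into products.

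First I would note that a natural transformation $\eta\colon F\Rightarrow G$ is precisely the same data as a functor $H\colon \mathscr C\times[1]\to\mathscr D$, where $[1]=\{0<1\}$ is the two-element chain, with $H(-,0)=F$, $H(-,1)=G$, and with $H$ sending the unique nonidentity arrow $(1_c,0\to 1)\colon (c,0)\to(c,1)$ to $\eta_c\colon F(c)\to G(c)$. The point is that for $f\colon c\to c'$ in $\mathscr C$ the arrow $(f,0\to 1)$ of $\mathscr C\times[1]$ factors both as $(1_{c'},0\to1)\circ(f,0\to 0)$ and as $(f,1\to1)\circ(1_c,0\to1)$, so demanding that $H$ be well defined on it forces $H(f,0\to1)=\eta_{c'}\circ F(f)=G(f)\circ\eta_c$, which is exactly the naturality square; conversely, naturality makes this assignment a functor. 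Writing $j_0,j_1\colon\mathscr C\to\mathscr C\times[1]$ for the inclusions $c\mapsto(c,0)$ and $c\mapsto(c,1)$, we have $H\circ j_0=F$ and $H\circ j_1=G$.

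Next I would combine two standard facts. The nerve preserves products, $\mathcal N(\mathscr C\times\mathscr D)\cong\mathcal N\mathscr C\times\mathcal N\mathscr D$ as simplicial sets, because $\Hom([n],\mathscr C\times\mathscr D)=\Hom([n],\mathscr C)\times\Hom([n],\mathscr D)$ naturally in $[n]$; and geometric realization preserves finite products, $|X\times Y|\cong|X|\times|Y|$, provided the product on the right is taken in compactly generated Hausdorff spaces---which is the category we have been working in throughout. Since $\mathcal B[1]$ is the order complex of the chain $0<1$, a single edge, it is homeomorphic to $\Delta_1\cong[0,1]$. Putting these together yields a homeomorphism $\mathcal B(\mathscr C\times[1])\cong\mathcal B\mathscr C\times[0,1]$ under which $\mathcal Bj_0$ and $\mathcal Bj_1$ become the inclusions $x\mapsto(x,0)$ and $x\mapsto(x,1)$.

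Finally I would take $\mathcal BH\colon\mathcal B\mathscr C\times[0,1]\to\mathcal B\mathscr D$: this is a homotopy whose restriction to $\mathcal B\mathscr C\times\{0\}$ is $\mathcal B(H\circ j_0)=\mathcal BF$ and whose restriction to $\mathcal B\mathscr C\times\{1\}$ is $\mathcal B(H\circ j_1)=\mathcal BG$, so $\mathcal BF$ and $\mathcal BG$ are homotopic. The one genuinely nontrivial ingredient is the product formula for geometric realization, which fails in the ordinary category of topological spaces but holds in compactly generated spaces; since we are working there, it is available off the shelf. If one wished to avoid it, one could instead construct an explicit simplicial homotopy from the standard triangulation of $\Delta_n\times\Delta_1$ into its $n+1$ top-dimensional simplices, but the product argument is cleaner.
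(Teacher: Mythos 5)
Your proof is correct and is precisely the standard argument: the paper itself gives no proof of this lemma, citing Segal and \cite[Lemma~5.3.17]{Rosenberg}, and the route you take (encoding the natural transformation as a functor $\mathscr C\times[1]\to\mathscr D$, using that the nerve preserves products and that geometric realization preserves finite products in compactly generated Hausdorff spaces, where $\mathcal B[1]\cong[0,1]$) is exactly the proof found in those references. Your care in checking that naturality is what makes $H$ well defined on the arrows $(f,0\to1)$, and your remark that the product formula requires the compactly generated setting, are both on point.
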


In particular, if $\mathscr C$ and $\mathscr D$ are naturally equivalent, then
$\mathcal B\mathscr C$ and $\mathcal B\mathscr D$ are homotopy equivalent.  In fact, if a functor $F\colon \mathscr C\to \mathscr D$ has an adjoint, then $\mathcal B\mathscr C$ and $\mathcal B\mathscr D$ are homotopy equivalent~\cite[Corollary 3.7]{ktheory}. In particular,  a category with an initial or terminal object has a contractible classifying space.

One of the most powerful tools for working with classifying spaces of categories is Quillen's Theorem~A~\cite{Quillen}.  If $F\colon \mathscr C\to \mathscr D$ is a functor and $d$ is an object of $\mathscr D$, then the category $F/d$ has objects all pairs $(c,f)$ with $c$ an object of $\mathscr C$ and  $f\colon F(c)\to d$ an arrow of $\mathscr D$.  If $(c,f)$ and $(c',f')$ are objects of $F/d$, then a morphism from $(c,f)$ to $(c',f')$ is an arrow $g\colon c\to c'$ of $\mathscr C$ such that
\[\begin{tikzcd}
  F(c)\ar[rr, "F(g)"]\ar[rd,swap, "f"] & &  F(c')\ar[ld, "f'"]\\ &d&
  \end{tikzcd}\]
commutes. The composition of arrows is that of $\mathscr C$.

\begin{Thm}[Quillen's Theorem~A]
Let $F\colon \mathscr C\to \mathscr D$ be a functor such that $\mathcal B(F/d)$ is contractible for all objects $d$ of $\mathscr D$.  Then $\mathcal BF\colon \mathcal B\mathscr C\to \mathcal B\mathscr D$ is a homotopy equivalence.
\end{Thm}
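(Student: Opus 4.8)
The plan is to realize $\mathcal B\mathscr C$ and $\mathcal B\mathscr D$ as the two ``edge'' realizations of a single bisimplicial set built from $F$ and the comma categories $F/d$, feeding the hypothesis in at exactly one of the two edges. First I would form the bisimplicial set $X$ whose $(p,q)$-simplices are pairs consisting of a $p$-chain $c_0\to\cdots\to c_p$ in $\mathscr C$ together with a $q$-simplex $F(c_p)\to d_0\to\cdots\to d_q$ of $\mathcal N\mathscr D$ issuing from $F(c_p)$; equivalently, for fixed $q$, one has $X_{\bullet,q}=\coprod_{(d_0\to\cdots\to d_q)\in\mathcal N_q\mathscr D}\mathcal N(F/d_0)$. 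The $p$-direction operators act on the $c$-chain (the top face composing $F(c_p)\to d_0$ with $F(c_{p-1})\to F(c_p)$) and the $q$-direction operators act on the $d$-chain (the bottom face composing $F(c_p)\to d_0\to d_1$). There are evident projections of bisimplicial sets $\pi_{\mathscr C}\colon X\to\underline{\mathcal N\mathscr C}$ and $\pi_{\mathscr D}\colon X\to\underline{\mathcal N\mathscr D}$ to the bisimplicial sets constant in the $q$- and $p$-directions at $\mathcal N\mathscr C$ and $\mathcal N\mathscr D$ respectively, whose diagonals realize to maps $|\mathrm{diag}\, X|\to\mathcal B\mathscr C$ and $|\mathrm{diag}\, X|\to\mathcal B\mathscr D$.

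Next I would analyze the two edges. Fixing $q$ and realizing in the $p$-direction, $|X_{\bullet,q}|=\coprod\mathcal B(F/d_0)$ is a disjoint union of contractible spaces by the hypothesis, hence weakly equivalent to the discrete set $\mathcal N_q\mathscr D$ compatibly with $\pi_{\mathscr D}$; the standard bisimplicial realization lemma --- a map of bisimplicial sets that is a weak equivalence in each row induces a weak equivalence on diagonals --- then shows $\pi_{\mathscr D}\colon|\mathrm{diag}\, X|\to\mathcal B\mathscr D$ is a weak, hence (everything in sight being a CW complex) a homotopy equivalence. Symmetrically, fixing $p$ and a $c$-chain and realizing in the $q$-direction, the resulting simplicial set is the nerve of the category of objects of $\mathscr D$ under $F(c_p)$ (arrows $F(c_p)\to d$, with the obvious morphisms), which has the initial object $\mathrm{id}_{F(c_p)}$ and therefore, by the fact recalled above that a category with an initial object has a contractible classifying space, a contractible realization; so $\pi_{\mathscr C}\colon|\mathrm{diag}\, X|\to\mathcal B\mathscr C$ is a homotopy equivalence. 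Note that the hypothesis on $F$ is used only in the first of these; the second is automatic.

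Finally I must check that the composite agrees with $\mathcal BF$, that is, $\mathcal BF\circ\pi_{\mathscr C}\simeq\pi_{\mathscr D}$, after which $\mathcal BF\simeq\pi_{\mathscr D}\circ\pi_{\mathscr C}^{-1}$ is a homotopy equivalence. For this I would bring in the Grothendieck construction $\mathscr S$ with objects $(c,d,f\colon F(c)\to d)$ and morphisms $(g\colon c\to c',\,h\colon d\to d')$ satisfying $f'F(g)=hf$, together with the forgetful functors $w\colon\mathscr S\to\mathscr C$, $(c,d,f)\mapsto c$, and $u\colon\mathscr S\to\mathscr D$, $(c,d,f)\mapsto d$. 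The family of arrows $f\colon F(c)\to d$ is a natural transformation $F\circ w\Rightarrow u$ --- naturality is precisely the defining relation of morphisms in $\mathscr S$ --- so Segal's Lemma gives $\mathcal B(Fw)\simeq\mathcal Bu$. Moreover, sending a diagonal $n$-simplex $(c_0\to\cdots\to c_n;\,F(c_n)\to d_0\to\cdots\to d_n)$ of $X$ to the ladder of commuting squares whose $i$th rung is $F(c_i)\to d_0\to d_i$ defines a simplicial map $\theta\colon\mathrm{diag}\, X\to\mathcal N\mathscr S$ through which $\pi_{\mathscr C}$ factors as $\mathcal Bw\circ|\theta|$ and $\pi_{\mathscr D}$ as $\mathcal Bu\circ|\theta|$; then $\mathcal BF\circ\pi_{\mathscr C}=\mathcal B(Fw)\circ|\theta|\simeq\mathcal Bu\circ|\theta|=\pi_{\mathscr D}$, as desired.

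The genuine homotopy-theoretic content is concentrated in the bisimplicial realization lemma invoked twice in the second step (equivalently, that the diagonal of a bisimplicial set models its homotopy colimit); the remainder is bookkeeping, and the only two places where contractibility of a space is actually used are the hypothesis $\mathcal B(F/d)\simeq\ast$ and the triviality of the classifying space of an under-category. The fussiest verification is that the ladder assignment $\theta$ in the last step is genuinely simplicial --- the subtle operators being the top $p$-face and the bottom $q$-face, which rewrite the arrow $F(c_p)\to d_0$ by pre- and postcomposition --- but this is routine once the face identities are written out.
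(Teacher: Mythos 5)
Your argument is correct, and it is essentially Quillen's original proof of Theorem~A (which the paper quotes without proof, citing~\cite{Quillen}; see also~\cite{ktheory}): the bisimplicial set with $(p,q)$-simplices $\bigl(c_0\to\cdots\to c_p;\,F(c_p)\to d_0\to\cdots\to d_q\bigr)$, the two edgewise weak equivalences via the realization lemma, and the comparison with $\mathcal BF$ through the comma category $\mathscr S$ and Segal's Lemma are exactly the standard ingredients. The only point deserving the explicit verification you already flag is that $\theta$ commutes with the extreme faces $d_0$ and $d_n$ on the diagonal, which follows from the fact that the structure arrow $F(c_i)\to d_i$ of the $i$th object of $\theta(\sigma)$ is the full composite $F(c_i)\to F(c_n)\to d_0\to\cdots\to d_i$ and is therefore unchanged by deleting either end of the ladder.
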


\subsection{Classifying spaces of monoid actions}
Let $M$ be a monoid.  Then by a (right) \emph{$M$-set}, we mean a set $X$  equipped with a right action of $M$.  By an $M$-set pair $(X,Y)$, we mean an $M$-set $X$ and an $M$-invariant subset $Y$. There is an evident category of $M$-set pairs and we write $\End_M(X,Y)$ for the endomorphism monoid of $(X,Y)$ in this category; so it consists of all $M$-equivariant mappings $f\colon X\to X$ with $f(Y)\subseteq Y$.  Of course the automorphism group is denoted $\mathrm{Aut}_M(X,Y)$.

 If $K$ is a commutative ring, then associated to an $M$-set $X$ is a right $KM$-module $KX$ and a left $KM$-module $K^X$.  The former is defined by extending $K$-linearly the action of $M$ on $X$ and the latter by $(mf)(x) =f(xm)$ for $m\in M$ and $x\in X$.  If $(X,Y)$ is an $M$-set pair, then the set of mappings $f\colon X\to K$ that vanish on $Y$ is a $KM$-submodule of $K^X$ that we denote $K^{X\setminus Y}$ since such a mapping is uniquely determined by its restriction to $X\setminus Y$ (with inverse extension by $0$ to $Y$).  We then have exact sequences
\begin{gather*}
0\longrightarrow KY\longrightarrow KX\longrightarrow KX/KY\longrightarrow 0\\
0\longrightarrow K^{X\setminus Y}\longrightarrow K^X\longrightarrow K^Y\longrightarrow 0
\end{gather*}
where the second exact sequence comes from restricting mappings from $X$ to $Y$.   Note that $KX/KY$ has basis in bijection with $X\setminus Y$ and under this identification, the action becomes \[x\cdot m = \begin{cases} xm, & \text{if}\ xm\notin Y\\ 0, & \text{if}\ xm\in Y.\end{cases}\]  Thus we will often write $K[X\setminus Y]$ for $KX/KY$.  Notice that $K[X\setminus Y]$ is naturally a left $K\End_M(X,Y)$-module and $K^{X\setminus Y}$ is naturally a right $K\End_M(X,Y)$-module.

If $X$ is an $M$-set, then a basis for $X$ is a subset $T\subseteq X$ such that each $x\in X$ has a unique expression of the form $x=tm$ with $m\in M$ and $t\in T$.  An $M$-set with basis $T$ is said to be a free $M$-set on $T$.  For example, if $G$ is a group then a $G$-set is free if and only if all the point stabilizers are trivial, in which case any set of orbit representatives is a basis.   Note that $T\times M$ is a free $M$-set with basis $T\times \{1\}$ in bijection with $T$ via the action $(t,m_0)m = (t,m_0m)$. Moreover, if $X$ is an $M$-set and $T\subseteq X$, then $T$ is a basis for $X$ if and only if the canonical mapping $T\times M\to X$ given by $(t,m)\mapsto tm$ is an isomorphism of $M$-sets. Of course, dual notions apply to left $M$-sets.  If $X$ is a free (left) $M$-set with basis $T$, then $KX$ is a free (left) $KM$-module with basis $T$, a fact that we shall use frequently without comment.

Nunes~\cite{Nunes} used the category of elements of an $M$-set $X$ to study the homology of $KX$.  The corresponding theory for the cohomology of $K^X$ was considered by Margolis, Saliola and the author in~\cite{MSS}. Here we develop the theory for $M$-set pairs, which was only partially developed in~\cite{MSS}.

If $X$ is an $M$-set, then the \emph{category of elements} $X\rtimes M$ of $M$ has object set $X$ and arrow set $X\times M$.  The arrow $(x,m)$ goes from $xm$ to $x$ and composition is given by $(x,m)(xm,n)= (x,mn)$. The identity at $x$ is $(x,1)$. We note that the opposite of this category is used in~\cite{Nunes}, but we follow the convention from category theory.

Note that a $q$-simplex in the nerve of $X\rtimes M$ is of the form
\begin{gather*}
((x,m_q),(xm_q,m_{q-1}),\ldots, (xm_q\cdots m_2,m_1)), \\
xm_q\cdots m_1\xrightarrow{\,m_1\,} xm_q\cdots m_2\xrightarrow{\,m_2\,}\cdots\xrightarrow{m_{q-2}}xm_qm_{q-1}\xrightarrow{m_{q-1}}xm_q\xrightarrow{\,m_q\,} x
\end{gather*}
 and hence is uniquely determined by the $(q+1)$-tuple $(x,m_q,\ldots, m_1)$.  That is, the set of $q$-simplices of $\mathcal \mathcal B(X\rtimes M)$ can be identified with the set $X\times M^q$.  The degenerate simplices are precisely those containing an entry $m_i=1$.   We also record here that in the normalized chain complex $C_q(\mathcal B(X\rtimes M),K)$ one has
\begin{equation}\label{eq:boundary.cat.els}
\begin{split}
d(x,m_q,\ldots, m_1) &= (x,m_q,\ldots, m_2)+\sum_{i=1}^{q-1} (-1)^i(x,m_q,\ldots, m_{i+1}m_i,\ldots, m_1)\\  &\qquad +(-1)^q(xm_q,m_{q-1},\ldots, m_1)
\end{split}
\end{equation}
where degenerate simplices are treated as zero (so the above formula is the computation in the Moore complex without this convention).

We shall say that an $M$-set $X$ is \emph{contractible} if $\mathcal B(X\rtimes M)$ is contractible.  Notice that if $X$ is the one-point set, then $X\rtimes M$ is the one-object category $M$, and so $\mathcal B(X\rtimes M) = \mathcal BM$.  If $U(M)$ denotes the underlying set of $M$, then one puts $\mathcal EM=\mathcal B(U(M)\rtimes M)$.   Note that $M$ acts on the left of $U(M)\rtimes M$ via left multiplication on objects and $m(x,m') = (mx,m')$ on arrows.  Thus $M$ acts by simplicial mappings on the nerve $\mathcal N(U(M)\rtimes M)$ and hence cellularly on $\mathcal EM$.   The set of nondegenerate $q$-simplices of $\mathcal EM$ can be identified with $M\times (M\setminus \{1\})^q$ with action $m(m_q,m_{q-1},\ldots, m_0)= (mm_q,m_{q-1},\ldots, m_0)$ and hence is a free left $M$-set with basis $\{1\}\times (M\setminus \{1\})^q$, which is in bijection with $(M\setminus \{1\})^q$.  Therefore, an element of $M\times (M\setminus \{1\})^q$ will be written as $m(m_q,\ldots, m_1)$ instead of $(m,m_q,\ldots, m_1)$.     In fact,  $\mathcal EM$ is a free contractible $M$-CW complex and is uniquely determined by this property up to $M$-homotopy equivalence~\cite{GS1}.   For convenience of the reader, we prove the contractibility of $\mathcal EM$ and record its consequences, although this is not by any means new.

\begin{Prop}\label{p:EM}
Let $M$ be a monoid.  Then $\mathcal EM$ is contractible.  Moreover, if $K$ is any commutative ring, then a free resolution of the trivial module $K$ is given by the augmented normalized chain complex of $\mathcal EM$, that is, $C_q(\mathcal EM,K)$ is a free $KM$-module with basis $(M\setminus \{1\})^q$. The boundary map is given on the basis by
\begin{align*}
d_q(m_q,\ldots, m_1) &= (m_q,\ldots, m_2) +\sum_{i=1}^{q-1} (-1)^i(m_q,\ldots, m_{i+1}m_i,\ldots, m_1)\\ &\qquad +(-1)^{q}m_q(m_{q-1},\ldots, m_1)
\end{align*}
where degenerate simplices are treated as $0$,  and the augmentation sends $m\in M$ to $1$ (identifying the free $KM$-module on $M^0$ with $KM$).
\end{Prop}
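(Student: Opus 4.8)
The plan is to prove contractibility by a soft categorical argument, then read the claimed free resolution directly off the cellular structure of $\mathcal{B}(U(M)\rtimes M)$ described above, and finally deduce exactness of the augmented complex from contractibility.

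First I would observe that $1\in M$, regarded as an object of the category of elements $U(M)\rtimes M$, is a \emph{terminal} object: an arrow of $U(M)\rtimes M$ with target $1$ has the form $(1,m)$ and its source is $1\cdot m=m$, so for each object $x$ there is exactly one arrow $x\to 1$, namely $(1,x)$. Since a category with a terminal object has a contractible classifying space (as recalled above), $\mathcal{E}M=\mathcal{B}(U(M)\rtimes M)$ is contractible. (Equivalently one could apply Segal's Lemma~\ref{l:SegalLemma} to the natural transformation from the identity functor of $U(M)\rtimes M$ to the constant functor at $1$ whose component at $x$ is $(1,x)\colon x\to 1$; naturality is just the composition identity $(1,x)(x,m)=(1,xm)$.)

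Next I would identify the augmented normalized chain complex. By the description of $\mathcal{B}(X\rtimes M)$ preceding the statement, specialized to $X=U(M)$: the nondegenerate $q$-simplices of $\mathcal{E}M$ are the tuples $(x,m_q,\ldots,m_1)$ with all $m_i\in M\setminus\{1\}$; the left $M$-action is $m\cdot(x,m_q,\ldots,m_1)=(mx,m_q,\ldots,m_1)$, which is free with the set of simplices having $x=1$ (i.e.\ $(M\setminus\{1\})^q$) as a basis; hence $C_q(\mathcal{E}M,K)$ is the free $KM$-module on $(M\setminus\{1\})^q$. Since $M$ acts on $U(M)\rtimes M$ by functors it acts by simplicial maps on the nerve, so the boundary maps are $KM$-linear, and substituting $x=1$ into~\eqref{eq:boundary.cat.els}, then abbreviating $(1,m_q,\ldots,m_1)$ as $(m_q,\ldots,m_1)$ and the last face $(m_q,m_{q-1},\ldots,m_1)$ as $m_q(m_{q-1},\ldots,m_1)$, yields exactly the displayed formula for $d_q$, with terms containing a degenerate simplex dropped. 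In degree $0$ we have $C_0(\mathcal{E}M,K)=KM$, and the map $\varepsilon\colon KM\to K$ sending each $m$ to $1$ is $KM$-linear for the trivial action and satisfies $\varepsilon\circ d_1=0$ since $d_1(m_1)=1-m_1$.

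Finally, since $\mathcal{E}M$ is contractible its reduced homology vanishes in all degrees, and reduced homology is computed by the augmented normalized chain complex $\cdots\to C_1(\mathcal{E}M,K)\to C_0(\mathcal{E}M,K)\xrightarrow{\ \varepsilon\ }K\to 0$ (using that $\varepsilon$ is surjective and the conventions on $\til{H}_{-1}$ fixed at the start of the section); hence this augmented complex is exact, and as each term is a free $KM$-module it is a free resolution of the trivial module $K$. I do not expect a genuine obstacle here; the only points requiring care are bookkeeping ones — checking that the arrows $(x,m)\colon xm\to x$ make $1$ terminal rather than initial, that the $M$-action is free on the first coordinate with the stated basis, and that the abbreviated notation $m(m_q,\ldots,m_1)$ is compatible with the face maps.
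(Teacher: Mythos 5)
Your proof is correct and follows essentially the same route as the paper's: identify $1$ as a terminal object of $U(M)\rtimes M$ to get contractibility, read the free $KM$-module structure off the free left $M$-action on the nondegenerate simplices, specialize \eqref{eq:boundary.cat.els} at $x=1$ for the boundary formula, and deduce exactness of the augmented complex from vanishing of reduced homology. You supply a bit more bookkeeping (the Segal-lemma alternative, the check that $\varepsilon\circ d_1=0$) than the paper's terser argument, but the substance is identical.
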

\begin{proof}
The category $U(M)\rtimes M$ has terminal object $1$ since $(1,m)$ is the unique arrow from $m$ to $1$. Thus
 $\mathcal EM$ is contractible, and so
 the augmented normalized chain complex of $\mathcal EM$ is
 a resolution of $K$.  The formula for the boundary map is immediate from~\eqref{eq:boundary.cat.els} with $x=1$.
\end{proof}

We may then deduce the standard chain and cochain complexes for monoid homology and cohomology and ``independence'' from the ground ring.

\begin{Cor}\label{c:standard.res}
Let $M$ be a monoid and $K$ a commutative ring with unit.
\begin{enumerate}
\item If $V$ is a left $KM$-module, then $H^q(M,V)$ is the $q^{th}$-cohomology of the cochain complex $C^\bullet(M,V)$ with $C^q(M,V) = V^{(M\setminus \{1\})^q}$ (identified with those mappings in $V^{M^q}$ vanishing on $q$-tuples containing $1$) with coboundary map $\delta\colon C^q(M,V)\to C^{q+1}(M,V)$ given by
    \begin{align*}
    \delta(f)(m_q,\ldots, m_0) &= f(m_q,\ldots, m_1)+\sum_{i=0}^{q-1}(-1)^{i+1} f(m_q,\ldots, m_{i+1}m_i,\cdots m_0)\\ &\qquad +(-1)^{q+1}m_qf(m_{q-1},\cdots, m_0).
    \end{align*}
      In particular, if $K$ is an $L$-algebra, then $\Ext^q_{KM}(K,V)\cong \Ext^q_{LM}(L,V)$ as $L$-modules.
\item If $V$ is a right $KM$-module, then $H_q(M,V)$ is the $q^{th}$-homology of the chain complex $C_{\bullet}(M,V)$ where $C_q(M,V) = V\otimes_K K[(M\setminus \{1\})^q]$.  The boundary map $\delta\colon C_q(M,V)\to C_{q-1}(M,V)$ is given by
    \begin{align*}
    d(v\otimes (m_q,\ldots, m_1)) &= v\otimes (m_q,\ldots, m_2)+ \sum_{i=1}^{q-1}(-1)^iv\otimes (m_q,\ldots, m_{i+1}m_i,\ldots, m_1)\\ &\qquad  +(-1)^qvm_q\otimes (m_{q-1},\ldots, m_1)
    \end{align*}
     where we interpret $v\otimes (a_{q-1},\ldots, a_1)=0$ if some $a_i=1$.   In particular, if $K$ is an $L$-algebra, then $\Tor_q^{KM}(V,K)\cong \Tor_q^{LM}(V,L)$ as $L$-modules.
\end{enumerate}
\end{Cor}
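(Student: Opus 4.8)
The plan is to obtain both standard complexes by feeding the free resolution of the trivial $KM$-module $K$ furnished by Proposition~\ref{p:EM} --- the augmented normalized chain complex of $\mathcal EM$ --- into the two exact functors $\Hom_{KM}(-,V)$ and $V\otimes_{KM}-$, and then reading off the result; contractibility of $\mathcal EM$ guarantees the resolution computes the relevant derived functors.

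For part (1), I would use that $C_q(\mathcal EM,K)$ is free over $KM$ on the basis $(M\setminus\{1\})^q$: evaluation at this basis gives a natural isomorphism $\Hom_{KM}(C_q(\mathcal EM,K),V)\cong V^{(M\setminus\{1\})^q}$, which I identify with the set of mappings in $V^{M^q}$ vanishing on tuples containing $1$, consistent with the normalization. The coboundary is then the transpose $\Hom_{KM}(d_{q+1},V)$ of the boundary map of Proposition~\ref{p:EM}; after the relabeling of indices and the tracking of signs, this produces exactly the displayed formula for $\delta$, with the last term $(-1)^{q+1}m_q f(m_{q-1},\ldots,m_0)$ coming from dualizing the term $(-1)^q m_q(m_{q-1},\ldots,m_1)$ and from the fact that $\Hom_{KM}$ converts the left $KM$-structure on $C_q(\mathcal EM,K)$ into the $M$-action on $V$. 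Since $\mathcal EM$ is contractible, the cohomology of $C^\bullet(M,V)$ computes $\Ext^q_{KM}(K,V)=H^q(M,V)$.

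For part (2), I would apply $V\otimes_{KM}-$, using $C_q(\mathcal EM,K)\cong KM\otimes_K K[(M\setminus\{1\})^q]$ as a left $KM$-module to get a natural isomorphism $V\otimes_{KM}C_q(\mathcal EM,K)\cong V\otimes_K K[(M\setminus\{1\})^q]$; the differential $1\otimes d_q$ then becomes the displayed boundary map, the last term $(-1)^q vm_q\otimes(m_{q-1},\ldots,m_1)$ arising because tensoring over $KM$ moves the leftmost monoid factor across to act on $V$ on the right. Contractibility of $\mathcal EM$ again yields that the homology of $C_\bullet(M,V)$ computes $\Tor_q^{KM}(V,K)=H_q(M,V)$. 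Finally, for the two ``independence of the ground ring'' statements, I would observe that when $K$ is an $L$-algebra and $V$ is a $KM$-module, hence an $LM$-module, the complexes $C^\bullet(M,V)$ and $C_\bullet(M,V)$ just constructed depend, as complexes of $L$-modules, only on the underlying $L$-module $V$ and the action of the elements of $M$: the scalars enter solely through $V$, and every differential is an $L$-linear combination of translates by monoid elements. Thus the complexes built from $\mathcal EM$ viewed over $K$ and over $L$ coincide as complexes of $L$-modules, giving $\Ext^q_{KM}(K,V)\cong\Ext^q_{LM}(L,V)$ and $\Tor_q^{KM}(V,K)\cong\Tor_q^{LM}(V,L)$ as $L$-modules.

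The only real labor is the index/sign bookkeeping needed to pass between the convention of Proposition~\ref{p:EM}, where $q$-simplices are written $(m_q,\ldots,m_1)$, and that of the present statement, where $(q+1)$-tuples are written $(m_q,\ldots,m_0)$; there is no genuine mathematical obstacle.
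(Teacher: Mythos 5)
Your proposal is correct and follows exactly the route the paper intends: the corollary is stated without a written proof precisely because it is meant to be read off by applying $\Hom_{KM}(-,V)$ and $V\otimes_{KM}-$ to the free resolution of Proposition~\ref{p:EM}, with the ground-ring independence following because the resulting complexes of $L$-modules depend only on the underlying $L$-module and the $M$-action. Your sign/index bookkeeping and the freeness-based identifications $\Hom_{KM}(C_q(\mathcal EM,K),V)\cong V^{(M\setminus\{1\})^q}$ and $V\otimes_{KM}C_q(\mathcal EM,K)\cong V\otimes_K K[(M\setminus\{1\})^q]$ are all as intended.
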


Another corollary of Proposition~\ref{p:EM} is the following (which appears in~\cite{MSS}).

\begin{Cor}\label{c:contract.proj}
Let $M$ be a monoid and $e\in M$ an idempotent.  Then $eM$ is a contractible right $M$-set, that is, $\mathcal B(eM\rtimes M)$ is contractible.
\end{Cor}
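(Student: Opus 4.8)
The plan is to realise $eM$ as an $M$-set retract of the underlying set $U(M)$ of $M$ and then transport contractibility through the category-of-elements construction and geometric realisation. The elementary fact driving everything is that $em=m$ for all $m\in eM$: if $m=en$, then $em=e^2n=en=m$. Hence the inclusion $\iota\colon eM\hookrightarrow U(M)$ of the right ideal $eM$ is $M$-equivariant, as is the map $\rho\colon U(M)\to eM$ defined by $\rho(m)=em$, and $\rho\circ\iota=\mathrm{id}_{eM}$.

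Next I would record that $X\mapsto X\rtimes M$ is a functor from $M$-sets to small categories: an $M$-equivariant map $f\colon X\to X'$ induces the functor that is $f$ on objects and sends the arrow $(x,m)\colon xm\to x$ to $(f(x),m)\colon f(x)m\to f(x)$, which is well defined and functorial precisely because $f(xm)=f(x)m$. Applying this to $\iota$ and $\rho$ exhibits $eM\rtimes M$ as a retract of $U(M)\rtimes M$ in the category of small categories; since the nerve and geometric realisation are both functors, $\mathcal B(eM\rtimes M)$ is a retract of $\mathcal B(U(M)\rtimes M)=\mathcal EM$ in the category of spaces.

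Finally, Proposition~\ref{p:EM} says $\mathcal EM$ is contractible, and a retract of a contractible space is contractible: composing a homotopy from $\mathrm{id}_{\mathcal EM}$ to a constant map on either side with $\mathcal B\iota$ and $\mathcal B\rho$ yields a homotopy from $\mathrm{id}_{\mathcal B(eM\rtimes M)}$ to a constant map. This gives the claim.

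This statement is essentially formal given Proposition~\ref{p:EM}, so I do not anticipate a genuine obstacle; the one point needing a moment's care is the well-definedness of the induced functors, which is exactly the identity $em=m$ on $eM$. If one prefers to bypass Proposition~\ref{p:EM} entirely, Segal's Lemma~\ref{l:SegalLemma} gives a direct alternative: the constant functor $c_e$ on $eM\rtimes M$ at the object $e$ (sending every arrow to $1_e$) admits a natural transformation $\mathrm{id}\Rightarrow c_e$ with component $(e,x)\colon x\to e$ at $x\in eM$ — a legitimate arrow because $ex=x$, with naturality for $(x,m)\colon xm\to x$ amounting to the composition identity $(e,x)(x,m)=(e,xm)$ in $eM\rtimes M$ — whence $\mathrm{id}=\mathcal B(\mathrm{id})$ is homotopic to the constant map $\mathcal B(c_e)$, and $\mathcal B(eM\rtimes M)$ is contractible.
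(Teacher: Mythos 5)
Your main argument is exactly the paper's proof: left multiplication by $e$ gives an $M$-set retraction of $U(M)$ onto $eM$, so $\mathcal B(eM\rtimes M)$ is a retract of the contractible space $\mathcal EM$ and hence contractible. Both this and your alternative via Segal's lemma (the natural transformation $\mathrm{id}\Rightarrow c_e$ with components $(e,x)$) are correct; the latter is just a direct version of the terminal-object argument underlying Proposition~\ref{p:EM}.
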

\begin{proof}
Note that left multiplication by $e$ induces a retraction of $M$-sets of $U(M)$ onto $eM$.  Thus $\mathcal B(eM\rtimes M)$ is a retract of $\mathcal EM$ and hence contractible as a retract of a contractible space is contractible.
\end{proof}

The first part of the following proposition can be found in~\cite{Nunes} (see also the appendix of~\cite{Loday} for a corresponding result for categories),  and the second part is essentially in~\cite{MSS} at the level of cohomology.  The final part seems to be new.

\begin{Prop}\label{p:pass.to.relative}
Let $M$ be a monoid and $K$ a commutative ring with unit.
\begin{enumerate}
\item There is an isomorphism $C_{\bullet}(M,KX)\cong C_{\bullet}(\mathcal B(X\rtimes M),K)$ for each $M$-set $X$, natural in $X$.
\item  There is an isomorphism $C^{\bullet}(M,K^X)\cong C^{\bullet}(\mathcal B(X\rtimes M),K)$ for each $M$-set $X$, natural in $X$.
\item If $(X,Y)$ is an $M$-set pair, then there are isomorphisms, natural in the pair $(X,Y)$, of (co)chain complexes
\begin{gather*}
C_{\bullet}(M,K[X\setminus Y])\to C_{\bullet}(\mathcal B(X\rtimes, M),\mathcal B(Y\rtimes M),K),\ \text{and}\\\ C^\bullet(M, K^{X\setminus Y})\to C^\bullet(\mathcal B(X\rtimes M), \mathcal B(Y\rtimes M),K),\end{gather*} where the relative (co)homology is with respect to the natural CW structure on a classifying space.  In particular, there are isomorphisms of $K\End_M(X,Y)$-modules
\begin{align*}
H_n(M, K[X\setminus Y])& \cong H_n(\mathcal B(X\rtimes M),\mathcal B(Y\rtimes M),K)\\  H^n(M, K^{X\setminus Y}) &\cong H^n(\mathcal B(X\rtimes M),\mathcal B(Y\rtimes M),K)
\end{align*}
for all $n\geq 0$.
\end{enumerate}
\end{Prop}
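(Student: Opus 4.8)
\textit{Proof plan.} The plan is to deduce all three parts from one combinatorial identification of bases, with parts (1) and (2) arising as the special case $Y=\emptyset$ of part (3) (so that $\mathcal B(\emptyset\rtimes M)=\emptyset$, $K[X\setminus\emptyset]=KX$, $K^{X\setminus\emptyset}=K^X$, and the relative complexes reduce to the absolute ones). Throughout I would work at the level of normalized (co)chain complexes, using that these are the cellular (co)chain complexes of the classifying space as recalled above, and that a simplicial subset gives a CW subcomplex whose normalized chains form a subcomplex of cellular chains, the quotient computing relative (co)homology.

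So fix an $M$-set pair $(X,Y)$. Since $Y$ is $M$-invariant, $Y\rtimes M$ is a subcategory of $X\rtimes M$ with object set $Y$ and arrow set $Y\times M$, so $\mathcal N(Y\rtimes M)$ is a simplicial subset of $\mathcal N(X\rtimes M)$ and $(\mathcal B(X\rtimes M),\mathcal B(Y\rtimes M))$ is a CW pair. Under the identification of non-degenerate $q$-simplices of $\mathcal B(X\rtimes M)$ with $X\times(M\setminus\{1\})^q$ recalled above, the simplex $(x,m_q,\ldots,m_1)$ has vertex set consisting of $x$ together with the elements $xm_q\cdots m_j$ for $1\le j\le q$; since $Y$ is $M$-invariant these all lie in $Y$ exactly when $x$ does, and such a simplex belongs to $\mathcal N(Y\rtimes M)$ if and only if $x\in Y$. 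Hence the non-degenerate $q$-simplices of $\mathcal B(X\rtimes M)$ outside $\mathcal B(Y\rtimes M)$ are parametrized by $(X\setminus Y)\times(M\setminus\{1\})^q$, which is precisely a $K$-basis of $C_q(M,K[X\setminus Y])=K[X\setminus Y]\otimes_K K[(M\setminus\{1\})^q]$ from Corollary~\ref{c:standard.res}(2); dually, $C^q(M,K^{X\setminus Y})$ is the $K$-module of $K$-valued functions on the same set, matching the degree-$q$ part of the kernel of the restriction $C^\bullet(\mathcal B(X\rtimes M),K)\to C^\bullet(\mathcal B(Y\rtimes M),K)$. I would then verify that the bijection $x\otimes(m_q,\ldots,m_1)\leftrightarrow(x,m_q,\ldots,m_1)$ is a chain map by comparing the boundary formula of Corollary~\ref{c:standard.res}(2) with the cellular boundary~\eqref{eq:boundary.cat.els}: the first $q-1$ summands agree verbatim (with the common convention that a tuple with an entry $1$ is zero), and the last summand $(-1)^q(x\cdot m_q)\otimes(m_{q-1},\ldots,m_1)$ matches $(-1)^q(xm_q,m_{q-1},\ldots,m_1)$, the partial action $x\cdot m_q=0$ when $xm_q\in Y$ being exactly the statement that this last face, whose final vertex is $xm_q\in Y$, vanishes in the quotient by $C_\bullet(\mathcal B(Y\rtimes M),K)$. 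Dualizing and using Corollary~\ref{c:standard.res}(1) gives the cochain statement.

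It remains to record naturality and the $K\End_M(X,Y)$-equivariance. Naturality in $(X,Y)$ is immediate from the explicit formulas, since a morphism of $M$-set pairs simultaneously induces a functor of categories of elements (hence a cellular map of pairs) and a morphism of the corresponding $KM$-modules, and the displayed bijection intertwines them. For the ``in particular'' clause, an element of $\End_M(X,Y)$ is an endomorphism of the pair $(X,Y)$, hence acts on $K[X\setminus Y]$ (resp.\ $K^{X\setminus Y}$) by $KM$-module endomorphisms and on the pair $(\mathcal B(X\rtimes M),\mathcal B(Y\rtimes M))$ by cellular self-maps; by functoriality of $\Tor_n^{KM}(-,K)$ (resp.\ $\Ext^n_{KM}(K,-)$) and of relative (co)homology these induce $\End_M(X,Y)$-actions on $H_n(M,K[X\setminus Y])$ and on $H_n(\mathcal B(X\rtimes M),\mathcal B(Y\rtimes M),K)$ (and their cohomological counterparts), and the chain-level naturality already established shows these actions agree under the isomorphism; linearizing yields the asserted $K\End_M(X,Y)$-module isomorphisms. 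The single delicate point, which really drives the whole argument, is the identification of $\mathcal N(Y\rtimes M)$ inside $\mathcal N(X\rtimes M)$ as exactly the simplices whose final vertex lies in $Y$ --- equivalently, the matching of the partial action's rule ``$x\cdot m_q=0$ when $xm_q\in Y$'' with the vanishing of the last face in the relative complex; the rest is a line-by-line comparison of the two explicit (co)boundary formulas already in hand.
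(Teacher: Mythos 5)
Your proposal is correct and follows essentially the same route as the paper: identify the $K$-basis $(X\setminus Y)\times(M\setminus\{1\})^q$ on both sides, match the boundary/coboundary formulas of Corollary~\ref{c:standard.res} with \eqref{eq:boundary.cat.els} (the vanishing of the last face when $xm_q\in Y$ corresponding exactly to the partial action on $K[X\setminus Y]$), and observe naturality to get the $K\End_M(X,Y)$-equivariance. The only cosmetic difference is that you obtain (1) and (2) as the case $Y=\emptyset$ of (3), whereas the paper proves the absolute statements first and then passes to the relative ones.
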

\begin{proof}
A $K$-basis for $C_q(M,KX)$ consists of all elements of the form $x\otimes (m_q,\ldots, m_1)$ with $x\in X$ and $m_1,\ldots, m_q\in M\setminus \{1\}$, and a $K$-basis for $C_q(\mathcal B(X\rtimes M),K)$ consists of all elements of the form $(x,m_q,\ldots, m_1)$ with $x\in X$ and $m_1,\ldots, m_q\in M\setminus \{1\}$.  It's immediate from \eqref{eq:boundary.cat.els} and Corollary~\ref{c:standard.res}(2) that the maps sending $x\otimes (m_q,\ldots, m_1)$ to $(x,m_q,\ldots, m_1)$ for $q\geq 0$ yield an isomorphism of chain complexes.  Naturality of the isomorphism is clear.

We have that $C^q(M,K^X) = (K^X)^{(M\setminus \{1\})^q}$ and $C^q(\mathcal B(X\rtimes M),K) = K^{X\times (M\setminus \{1\})^q}$. These are isomorphic $K$-modules via currying, that is, the map sending $f\in (K^X)^{(M\setminus \{1\})^q}$ to $F\in K^{X\times (M\setminus \{1\})^q}$ with $F(x,m_q,\ldots, m_1) = f(m_q,\ldots, m_1)(x)$. A routine computation shows that this assignment is a chain map, natural in $X$.

If $(X,Y)$ is an $M$-set pair, then it is clear that the currying isomorphism $C^q(M,K^X)\to C^q(\mathcal B(X\rtimes M),K)$ has the property that $f(m_1,\ldots, m_q)$ vanishes on $Y$ for all $m_1,\ldots, m_q\in M\setminus \{1\}$, if and only if the associated mapping $F\in K^{X\times (M\setminus \{1\})^q}$ vanishes on  $Y\times (M\setminus \{1\})^q$, and hence it restricts to an isomorphism of cochain complexes $C^\bullet(M,K^{X\setminus Y})\to C^\bullet(\mathcal B(X\rtimes, M), \mathcal B(Y\rtimes M),K)$.  The naturality is immediate.

Note that  $C_q(M,KX)/C_q(M,KY)$ has $K$-basis consisting of cosets of the form $x\otimes (m_q,\ldots, m_1)+C_q(M,KY)$ with $x\in X\setminus Y$, and hence maps isomorphically to $C_q(M,K[X\setminus Y])$ via the map sending $x\otimes (m_q,\ldots, m_1)+C_n(M,KY)$ to $(x+KY)\otimes (m_q,\ldots, m_1)$ for $x\in X\setminus Y$, as the elements $(x+KY)\otimes (m_q,\ldots, m_1)$ with $x\in X\setminus Y$ form a $K$-basis for $C_q(M,K[X\setminus Y])$.  The result follows from this and the naturality in (1), which then identifies $C_q(M,KX)/C_q(M,KY)$ with $C_q(\mathcal B(X\rtimes M),\mathcal B(Y\rtimes M),K)$.
\end{proof}

From the special case in which $X$ is a one-point set, and so $\mathcal B(X\rtimes M)=\mathcal BM$, we recover the classical fact that $H_n(M,K)\cong H_n(\mathcal BM,K)$ and $H^n(M,K)\cong H^n(\mathcal BM,K)$ for any commutative ring $K$.

\subsection{Group actions and reduction to posets}
An $M$-set $X$ is \emph{cyclic} if $X=xM$ for some $x\in X$; in particular, $X\neq \emptyset$.  To each $M$-set $X$, we have its associated poset $\Omega_M(X)$ of cyclic sub-$M$-sets.  So $\Omega_M(X) =\{xM\mid x\in X\}$ ordered by inclusion; if $M$ is understood, we just write $\Omega(X)$.  Note that $\Omega_M(U(M))$ is the poset $M/{\mathscr R}$ of principal right ideals.  More generally, if $R\subseteq M$ is a right ideal, then $\Omega(R)$ is the subposet of principal right ideals of $M$ contained in $R$.

For an $M$-set $X$, there is an evident functor $\Phi_X\colon X\rtimes M\to \Omega(X)$ sending $x$ to $xM$ on objects and sending $(x,m)\colon xm\to x$ to the arrow $(xmM,xM)$.

\begin{Rmk}\label{r:path.comp.bij}
The functor $\Phi_X\colon X\rtimes M\to \Omega(X)$ is surjective on objects and there is an arrow from $x$ to $y$ in $X\rtimes M$ if and only if $xM\subseteq yM$.  Hence, $\Phi_X$ induces a bijection of the sets of path components of the classifying spaces of these two categories, natural in $X$.
\end{Rmk}

The following was proved in~\cite[Theorem~6.15]{MSS} as an application of Quillen's Theorem~A~\cite{Quillen} (see~\cite{Nunes} for related results).  The key observation is that the category $\Phi_X/xM$ is isomorphic to $xM\rtimes M$.

\begin{Thm}\label{t:equaltoposet}
Suppose that $X$ is a right $M$-set such that each cyclic $M$-subset $xM$ with
$x\in X$ is contractible (i.e., $\mathcal B(xM\rtimes M)$ is contractible).  Then $\Phi_X\colon X\rtimes M\to \Omega(X)$ induces
a homotopy equivalence of classifying spaces $\mathcal B\Phi_X\colon \mathcal B(X\rtimes M)\to \Delta(\Omega(X))$.
\end{Thm}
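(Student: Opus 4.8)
The plan is to apply Quillen's Theorem~A to the functor $\Phi_X\colon X\rtimes M\to \Omega(X)$. Both categories are small, since their objects and arrows are built from the sets $X$ and $M$, and $\Omega(X)$ is a poset, so $\mathcal B\Omega(X)\cong \Delta(\Omega(X))$ by the discussion of posetal categories above. Thus it suffices to show that $\mathcal B(\Phi_X/yM)$ is contractible for every object $yM$ of $\Omega(X)$.

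The key point is to identify the comma category $\Phi_X/yM$ with $yM\rtimes M$. An object of $\Phi_X/yM$ is a pair $(z,f)$ with $z\in X$ and $f\colon \Phi_X(z)=zM\to yM$ an arrow of $\Omega(X)$, i.e.\ $zM\subseteq yM$; since $zM\subseteq yM$ is equivalent to $z\in yM$, and the arrow $f$ is then unique as $\Omega(X)$ is a poset, the objects of $\Phi_X/yM$ are precisely the elements of $yM$, i.e.\ the objects of $yM\rtimes M$. A morphism $(z,f)\to (z',f')$ in $\Phi_X/yM$ is an arrow $g\colon z\to z'$ of $X\rtimes M$ whose image in $\Omega(X)$ makes the requisite triangle commute; such $g$ has the form $(z',m)$ with $z'm=z$, and the commutativity condition is automatic because $\Omega(X)$ has at most one arrow between any two of its objects. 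Hence the morphisms $z\to z'$ in $\Phi_X/yM$ are exactly the arrows $z\to z'$ of $yM\rtimes M$, and identities and composition agree on the nose. Therefore $\Phi_X/yM\cong yM\rtimes M$.

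By hypothesis $\mathcal B(yM\rtimes M)$ is contractible for each $y\in X$, so every comma category $\Phi_X/yM$ has contractible classifying space, and Quillen's Theorem~A yields that $\mathcal B\Phi_X\colon \mathcal B(X\rtimes M)\to \mathcal B\Omega(X)=\Delta(\Omega(X))$ is a homotopy equivalence. There is no genuine obstacle here beyond carefully unwinding the definition of the comma category under the paper's convention that $(x,m)\colon xm\to x$ in $X\rtimes M$ — this is exactly what makes the sources and targets of morphisms match up in the isomorphism $\Phi_X/yM\cong yM\rtimes M$ — together with the (elementary but essential) observation that $zM\subseteq yM$ forces $z\in yM$, which is what pins down the object set of the comma category.
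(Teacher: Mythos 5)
Your proof is correct and follows exactly the route the paper takes: the paper cites~\cite[Theorem~6.15]{MSS} and records precisely your key observation, namely that Quillen's Theorem~A applies because $\Phi_X/yM\cong yM\rtimes M$, whose classifying space is contractible by hypothesis. Your careful unwinding of the comma category (including the point that $zM\subseteq yM$ iff $z\in yM$) supplies the details the paper leaves to the reference.
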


A monoid $M$ is said to be \emph{right p.p.}~(principally projective) if each principal right ideal of $M$ is a projective right $M$-set or, equivalently, is isomorphic to an $M$-set of the form $eM$ with $e=e^2$.   Recall that an element $m$ of a monoid $M$ is \emph{(von Neumann) regular} if there exists $n\in M$ with $mnm=m$.  One says that $M$ is \emph{regular} if each element of $M$ is regular.  For a regular monoid $M$, each principal right ideal is generated by an idempotent and hence $M$ is right p.p.  Fountain proved that $M$ is right p.p.\ if and only if each $\mathscr L^*$-class of $M$ contains an idempotent, cf.~\cite{fountain}.  Recall that $m\eL^* m'$ if, for all $x,y\in M$, one has $mx=my\iff m'x=m'y$, or equivalently there is an $M$-set isomorphism $mM\to m'M$ taking $m$ to $m'$. Of course, left p.p.\ is defined dually.
The following consequence of Theorem~\ref{t:equaltoposet} and Corollary~\ref{c:contract.proj} is~\cite[Corollary~6.18]{MSS}.

\begin{Cor}\label{c:rightppcase}
Let $M$ be a right p.p.\ monoid (e.g., a regular monoid) and let $R$ be
a right ideal of $M$.  Then $\mathcal B\Phi_R\colon \mathcal B(R\rtimes M)\to \Delta(\Omega(R))$ is a homotopy equivalence.
\end{Cor}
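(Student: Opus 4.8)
The plan is to apply Theorem~\ref{t:equaltoposet} directly with $X=R$, so the only thing to verify is that every cyclic sub-$M$-set of $R$ is contractible. First I would note that the cyclic sub-$M$-sets of $R$ are precisely the principal right ideals $xM$ with $x\in R$: since $R$ is a right ideal we have $xM\subseteq R$ for every $x\in R$, and conversely any cyclic sub-$M$-set of $R$ has the form $xM$. Thus $\Omega(R)$ is exactly the subposet of $M/{\mathscr R}$ consisting of principal right ideals of $M$ contained in $R$, and it suffices to show $\mathcal B(xM\rtimes M)$ is contractible for each such $x$.

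Next I would invoke the right p.p.\ hypothesis: each principal right ideal $xM$ is a projective right $M$-set and hence is isomorphic, as a right $M$-set, to $eM$ for some idempotent $e=e^2\in M$. Since the assignment $X\mapsto X\rtimes M$ is functorial (an $M$-set isomorphism induces an isomorphism of categories of elements) and the nerve and geometric realization are functorial, an $M$-set isomorphism $xM\cong eM$ induces a homeomorphism $\mathcal B(xM\rtimes M)\cong \mathcal B(eM\rtimes M)$. By Corollary~\ref{c:contract.proj}, $\mathcal B(eM\rtimes M)$ is contractible, hence so is $\mathcal B(xM\rtimes M)$. Therefore every cyclic sub-$M$-set of $R$ is contractible.

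With this verified, Theorem~\ref{t:equaltoposet} applied to the $M$-set $R$ immediately gives that $\mathcal B\Phi_R\colon \mathcal B(R\rtimes M)\to \Delta(\Omega(R))$ is a homotopy equivalence. For the parenthetical regular case, one only has to recall that in a regular monoid every principal right ideal is generated by an idempotent, so a regular monoid is right p.p.\ and the general statement applies. I do not expect any genuine obstacle here: the argument is a short composition of the two cited results, and the single point requiring (routine) care is that contractibility of an $M$-set is invariant under $M$-set isomorphism, which is immediate from the functoriality of the category-of-elements and nerve constructions.
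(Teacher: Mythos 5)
Your proof is correct and follows exactly the route the paper intends: the paper states this corollary as an immediate consequence of Theorem~\ref{t:equaltoposet} and Corollary~\ref{c:contract.proj}, which is precisely the combination you carry out, with the only (routine) extra observation being that contractibility of $xM$ is preserved under the $M$-set isomorphism $xM\cong eM$ furnished by the right p.p.\ hypothesis.
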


Let $G$ be a group.  By a \emph{$G$-category} (\emph{$G$-poset}), we mean a category (poset) with an action of $G$ by automorphisms.  A functor $F\colon \mathscr C\to \mathscr D$ of $G$-categories is $G$-equivariant if it commutes with the $G$-actions.
The classifying space of a $G$-category (order complex of a $G$-poset) is easily checked to be a $G$-CW complex and, in fact, the classifying space construction yields a functor from the category of $G$-categories to the category of $G$-CW complexes with appropriate equivariant maps.   Recall here that a $G$-CW complex is a CW complex with an action of $G$ by cellular maps such that the $G$-action permutes the cells of each dimension and the setwise stabilizer of a cell is its pointwise stabilizer. As is usual, if $X$ is a set/space/module with a $G$-action, then $X^G$ will denote the $G$-invariants (or fixed points) of $X$.

If $X$ is a right $M$-set and $G=\mathrm{Aut}_M(X)$, then $X\rtimes M$ is a $G$-category where $G$ acts on objects via its original action and on arrows by $g(x,m) = (g(x),m)$.  Also $\Omega(X)$ is a $G$-poset via $g(xM) = g(x)M$.  It is evident that $\Phi_X$ is $G$-equivariant.  Also, $G\backslash X$ is an $M$-set with action $(Gx)m = G(xm)$.

\begin{Cor}\label{c:equivariant.business}
Let $M$ be a right p.p.~monoid (e.g., regular) and $R$ a right ideal of $M$.  Let $K$ be a commutative ring with unit.  Then $\Phi_R\colon R\rtimes M\to \Omega(R)$ induces isomorphisms  $H_n(\mathcal B(R\rtimes M),K)\to H_n(\Delta(\Omega(R)),K)$ and $H^n(\Delta(\Omega(R)),K)\to H^n(\mathcal B(R\rtimes M),K)$ of $K\mathrm{Aut}_M(R)$-modules.
\end{Cor}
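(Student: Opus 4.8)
The plan is to combine the $G$-equivariance of $\Phi_R$, where $G=\mathrm{Aut}_M(R)$, with the homotopy equivalence furnished by Corollary~\ref{c:rightppcase}. The underlying triviality we exploit is that a bijective $KG$-module homomorphism is automatically an isomorphism of $KG$-modules (its set-theoretic inverse is $K$-linear and $G$-equivariant); so it suffices to produce the claimed maps as bijective $KG$-module homomorphisms.

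First I would recall, as noted just before the statement, that $\Phi_R\colon R\rtimes M\to\Omega(R)$ is a $G$-equivariant functor of $G$-categories, so by functoriality of the nerve and of geometric realization $\mathcal B\Phi_R\colon \mathcal B(R\rtimes M)\to\Delta(\Omega(R))$ is a $G$-equivariant cellular map of $G$-CW complexes. Consequently, on the normalized cellular (co)chain complexes it induces a chain map of complexes of $KG$-modules $C_\bullet(\mathcal B(R\rtimes M),K)\to C_\bullet(\Delta(\Omega(R)),K)$ and dually a cochain map of complexes of $KG$-modules $C^\bullet(\Delta(\Omega(R)),K)\to C^\bullet(\mathcal B(R\rtimes M),K)$; passing to (co)homology yields $KG$-module homomorphisms $H_n(\mathcal B(R\rtimes M),K)\to H_n(\Delta(\Omega(R)),K)$ and $H^n(\Delta(\Omega(R)),K)\to H^n(\mathcal B(R\rtimes M),K)$ for all $n\geq 0$. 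Second, Corollary~\ref{c:rightppcase} asserts that $\mathcal B\Phi_R$ is a homotopy equivalence of the underlying spaces, so these same induced maps on ordinary homology and cohomology with coefficients in $K$ are isomorphisms of $K$-modules. Being simultaneously $KG$-linear and bijective, they are $KG$-module isomorphisms, which proves the corollary.

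There is essentially no obstacle here beyond bookkeeping; the one point deserving a moment's care is that we are \emph{not} claiming that $\mathcal B\Phi_R$ is a $G$-homotopy equivalence (that would be a genuinely stronger statement, since a $G$-map can be a homotopy equivalence without admitting an equivariant homotopy inverse), and we do not need it: equivariance of the map $\mathcal B\Phi_R$ alone forces the induced maps on (co)homology to be $KG$-linear, and a $KG$-linear bijection is a $KG$-isomorphism regardless. If desired, one may also combine this with Proposition~\ref{p:pass.to.relative}(3) (taking $Y=\emptyset$, so that the relative (co)homology reduces to ordinary (co)homology) to restate the conclusion as $K\mathrm{Aut}_M(R)$-module isomorphisms $H_n(M,KR)\cong H_n(\Delta(\Omega(R)),K)$ and $H^n(M,K^R)\cong H^n(\Delta(\Omega(R)),K)$.
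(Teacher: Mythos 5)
Your proof is correct and follows essentially the same route as the paper: equivariance of $\Phi_R$ yields the $K\mathrm{Aut}_M(R)$-module homomorphisms on (co)homology, and Corollary~\ref{c:rightppcase} shows they are bijective, hence isomorphisms. Your remark that no $G$-homotopy equivalence is needed is a sensible clarification but does not change the argument.
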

\begin{proof}
Equivariance of $\Phi_R$ provides the desired $K\mathrm{Aut}_M(R)$-module homomorphisms.  These are isomorphisms  by Corollary~\ref{c:rightppcase} since $\mathcal B\Phi_R$ is a homotopy equivalence.
\end{proof}

Recall that a monoid $M$ is \emph{Dedekind-finite} if each left invertible element of $M$ is  right invertible.  This is easily checked to be equivalent to the nonunits of $M$ forming a two-sided ideal. Of course, every finite monoid is Dedekind-finite.   For a Dedekind-finite monoid $M$ with group of units $G$ and ideal of singular elements $S=M\setminus G$,  we have a surjective homomorphism $\rho\colon KM\to KG$ given by the identity map on $G$ and the zero map on $S$ (in fact, $KM/KS\cong KG$). Hence any (simple) $KG$-module can be inflated to a (simple) $KM$-module.  For the moment, we are interested in the case of the trivial $KG$-module.  To avoid notational confusion with the trivial $KM$-module, we shall write $K_{(G)}$ for the inflation of the trivial $KG$-module to a $KM$-module.

Retaining the above notation, we write $S\mathcal EM$ for $\mathcal B(S\rtimes M)$ as it consists of all simplices of $\mathscr EM$ of the form $s\sigma$ with $s\in S$.  Since $S$ is a two-sided ideal, $(U(M),S)$ is an $M$-set pair, and one has $M=\End_M(U(M),S)$ and $G=\mathrm{Aut}_M(U(M),S)$.  Hence $U(M)\rtimes M$ and $S\rtimes M$ are $G$-categories.  Note that taking $G$-orbits turns $(G\backslash U(M),G\backslash S)$ into an $M$-set pair.  It is immediate from the definitions that the CW pairs $(G\backslash \mathscr EM, G\backslash S\mathscr EM)$ and $(\mathcal B(G\backslash U(M)\rtimes M),\mathcal B(G\backslash S\rtimes M))$ are isomorphic.   Hence we have the following.

\begin{Thm}\label{t:two.trivials}
Let $M$ be a Dedekind-finite monoid with group of units $G$ and $K$ a commutative ring with unit.  Then \[\Ext^n_{KM}(K,K_{(G)})\cong H^n(G\backslash\mathscr EM,G\backslash S\mathscr EM,K)\] for all $n\geq 0$.
\end{Thm}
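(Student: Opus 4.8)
The plan is to deduce the statement from Proposition~\ref{p:pass.to.relative}(3), applied to the $M$-set pair $(G\backslash U(M),G\backslash S)$. By the identification of CW pairs recorded just before the statement, $(G\backslash\mathscr EM,G\backslash S\mathscr EM)$ is isomorphic to the pair $(\mathcal B(G\backslash U(M)\rtimes M),\mathcal B(G\backslash S\rtimes M))$, so Proposition~\ref{p:pass.to.relative}(3) gives
\[
H^n\bigl(M,K^{(G\backslash U(M))\setminus (G\backslash S)}\bigr)\cong H^n\bigl(\mathcal B(G\backslash U(M)\rtimes M),\mathcal B(G\backslash S\rtimes M),K\bigr)=H^n(G\backslash\mathscr EM,G\backslash S\mathscr EM,K).
\]
Since $\Ext^n_{KM}(K,K_{(G)})=H^n(M,K_{(G)})$ by the definition of monoid cohomology, the whole proof reduces to exhibiting a $KM$-module isomorphism $K^{(G\backslash U(M))\setminus (G\backslash S)}\cong K_{(G)}$.

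To set this up I would first pin down the $G$-orbits on $U(M)$. The left action of $M$ on $U(M)$ is by left multiplication, so $G=\mathrm{Aut}_M(U(M))$ acts on $U(M)$ by left multiplication. Because $S$ is a two-sided ideal it is $G$-invariant, whence the canonical map $G\backslash S\to G\backslash U(M)$ is injective and identifies $G\backslash S$ with the set of $G$-orbits contained in $S$. Since $U(M)=G\sqcup S$ and $G=G\cdot 1$ is a single orbit, the complement $(G\backslash U(M))\setminus (G\backslash S)$ consists of exactly one element, namely the orbit $G\cdot 1$.

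It then remains to compute the $KM$-module structure on $K^{(G\backslash U(M))\setminus (G\backslash S)}$. Its underlying $K$-module is free of rank one, since a function $f$ in it is determined by its value $c=f(G\cdot 1)$. For $m\in M$, the module action is $(mf)(G\cdot 1)=f((G\cdot 1)m)=f(Gm)$: if $m\in G$ then $Gm=G$, so this is $c$; if $m\in S$ then $Gm$ is an orbit contained in $S$, hence lies in $G\backslash S$, where $f$ vanishes, so it is $0$. Thus the units act as the identity and the singular elements act as zero, which is precisely the module $K_{(G)}$; combining the three steps yields the desired isomorphism. There is no genuine obstacle in this argument: it is a direct application of Proposition~\ref{p:pass.to.relative}(3) followed by an elementary orbit-and-action computation. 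The only point requiring care is the bookkeeping in the last step — tracking the action $(mf)(x)=f(xm)$ through the orbit identification so that the roles of $G$ and $S$ come out on the correct sides.
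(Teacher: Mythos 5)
Your proposal is correct and follows essentially the same route as the paper: apply Proposition~\ref{p:pass.to.relative}(3) to the $M$-set pair $(G\backslash U(M),G\backslash S)$, use the recorded identification of CW pairs, and check that $K^{(G\backslash U(M))\setminus(G\backslash S)}\cong K_{(G)}$ via $f\mapsto f(G)$, with units acting trivially and singular elements acting as zero. No gaps.
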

\begin{proof}
Put $X=G\backslash U(M)$ and $Y=G\backslash S$.  Then $X\setminus Y =\{G\}$ and $K^{X\setminus Y}\cong K_{(G)}$ via $f\mapsto f(G)$ because if $f\in K^{X\setminus Y}$ and $s\in S$, then $(sf)(G) = f(Gs) =0$ and if $g\in G$, then $(gf)(G) = f(Gg)=f(G)$.  The result then follows from Proposition~\ref{p:pass.to.relative}(3) and the discussion above.
\end{proof}

There is a dual result in the case that $M$ is finite and $K$ is a field, computing $\Ext$ from $K_{(G)}$ to $K$.  In order to state this, we need to recall some standard facts about finite dimensional algebras~\cite{assem,benson}.

\begin{Rmk}\label{r:duality}
If $A$ is a finite dimensional algebra over a field $K$, then the vector space dual functor $D$ gives a contravariant equivalence between the categories of finite dimensional left $A$-modules and finite dimensional left $A^{op}$-modules (which we shall view as right $A$-modules).  It follows that $\Ext^n_A(B,C)\cong \Ext^n_{A^{op}}(D(C),D(B))$ for any finite dimensional $A$-modules $B,C$.
\end{Rmk}

If $M$ is a monoid, then $(KM)^{op}\cong K[M^{op}]$.  Notice that  $G^{op}\cong G$ via inversion.    If $L$ is a left ideal of $M$, then $\Omega_{M^{op}}(L)$ is the poset of principal left ideals of $M$ contained in $L$.  Also note that $M$ acts naturally on the right of $\mathcal EM^{op}$ by cellular mappings (induced from the corresponding action on the nerve of $U(M)\rtimes M^{op}$ by simplicial mappings) and $\mathcal B(S\rtimes M^{op})=\mathcal EM^{op}S$, i.e., consists of those simplices of the form $\sigma s$ with $s\in S$.

\begin{Cor}\label{c:two.trivials.dual}
Let $M$ be a finite monoid with group of units $G$ and $K$ a field.  Then $\Ext^n_{KM}(K_{(G)},K)\cong H^n(\mathscr EM^{op}/G,\mathscr EM^{op}S/G,K)$ for all $n\geq 0$.
\end{Cor}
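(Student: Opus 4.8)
The plan is to deduce this from Theorem~\ref{t:two.trivials} by passing to the opposite algebra via the duality of Remark~\ref{r:duality}. Since $M$ is finite and $K$ is a field, $KM$ is a finite dimensional $K$-algebra, so $D$ gives a contravariant equivalence between finite dimensional left $KM$-modules and finite dimensional right $KM$-modules, i.e.\ left $K[M^{op}]$-modules. Hence $\Ext^n_{KM}(K_{(G)},K)\cong \Ext^n_{K[M^{op}]}(D(K),D(K_{(G)}))$. The first step is to identify the two dual modules: $D(K)$ is the trivial $K[M^{op}]$-module (the trivial module is one-dimensional, and $D$ of a one-dimensional module on which $M$ acts by a character is the module on which $M^{op}$ acts by the same scalars, here all $1$), and $D(K_{(G)})$ is $K_{(G^{op})}$, the inflation to $K[M^{op}]$ of the trivial $KG^{op}$-module — because $K_{(G)}$ is one-dimensional with $G$ acting trivially and $S$ acting as $0$, and dualizing keeps the underlying dimension $1$ while turning the $KM$-action into the analogous $K[M^{op}]$-action; note $S$ is also the singular ideal of $M^{op}$ and $G^{op}$ its group of units.

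The second step is to apply Theorem~\ref{t:two.trivials} to the Dedekind-finite monoid $M^{op}$ (every finite monoid is Dedekind-finite) with group of units $G^{op}$ and singular ideal $S$. This yields
\[\Ext^n_{K[M^{op}]}(K,K_{(G^{op})})\cong H^n\bigl(G^{op}\backslash \mathscr E M^{op},\, G^{op}\backslash S\mathscr E M^{op},\,K\bigr).\]
Here $\mathscr E M^{op}=\mathcal B(U(M^{op})\rtimes M^{op})$ and $S\mathscr E M^{op}=\mathcal B(S\rtimes M^{op})$, with $G^{op}$ acting on the \emph{left} of these categories (in the sense of Theorem~\ref{t:two.trivials}, the left action of the automorphism group of the right $M^{op}$-set $U(M^{op})$).

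The third step is purely cosmetic: reconcile notation. Via the isomorphism $G^{op}\cong G$ (inversion), the left $G^{op}$-action on $\mathcal B(U(M)\rtimes M^{op})$ is exactly the right $G$-action used in the statement, so $G^{op}\backslash \mathscr E M^{op}$ is the same CW complex as $\mathscr E M^{op}/G$; similarly $G^{op}\backslash S\mathscr E M^{op}=\mathscr E M^{op}S/G$ once one observes (as recorded just before the corollary) that $\mathcal B(S\rtimes M^{op})$ consists of the simplices $\sigma s$ with $s\in S$, i.e.\ is what the statement denotes $\mathscr E M^{op}S$. Combining the three steps gives $\Ext^n_{KM}(K_{(G)},K)\cong H^n(\mathscr E M^{op}/G,\mathscr E M^{op}S/G,K)$ for all $n\ge 0$.

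I do not expect a serious obstacle here; the only point requiring a little care — and the one I would write out most carefully — is the identification of the dual modules in step one, in particular checking that $D$ sends the inflation $K_{(G)}$ of the trivial $KG$-module to the inflation $K_{(G^{op})}$ of the trivial $KG^{op}$-module rather than to some twist, and that the singular ideal and group of units of $M^{op}$ coincide with those of $M$ as subsets. Both are immediate from the definitions, but they are the substantive content of the argument.
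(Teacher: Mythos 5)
Your proposal is correct and follows essentially the same route as the paper: dualize via Remark~\ref{r:duality}, identify $D(K)$ with the trivial $K[M^{op}]$-module and $D(K_{(G)})$ with the inflation of the trivial $KG^{op}$-module, and then apply Theorem~\ref{t:two.trivials} to $M^{op}$. The extra care you take with the module identifications and the left/right action bookkeeping is exactly the content the paper leaves implicit.
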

\begin{proof}
Observe that $D(K)$ is the trivial $KM^{op}$-module and $D(K_{(G)})$ is inflation of the trivial $KG^{op}$-module to $KM^{op}$.  The result then follows from Remark~\ref{r:duality} and Theorem~\ref{t:two.trivials}.
\end{proof}

The case $n=1$ of Theorem~\ref{t:two.trivials} and Corollary~\ref{c:two.trivials.dual} can be simplified  using the long exact sequence in cohomology and group cohomology to prove $\Ext^1_{KM}(K,K_{(G)})\cong \til H^0(\Delta(\Omega_M(S))),K)^G$ and $\Ext^1_{KM}(K_{(G)},K)\cong \til H^0(\Delta(\Omega_{M^{op}}(S)),K)^G$, generalizing a result
stated in~\cite[Remark~2B.12]{Khovanov}.  However, we shall deduce this later (Corollary~\ref{c:ext1.two.trivs}) from a more general result.

\subsection{Tor and homological epimorphisms}
The goal in this subsection is to give a topological characterization of when a homomorphism $\p\colon M\to N$ of monoids induces a homological epimorphism $\ov\p\colon KM\to KN$ of monoid algebras for a commutative ring $K$.     To do so, we generalize Proposition~\ref{p:pass.to.relative}(1) to a method to compute the functor $\mathrm{Tor}$ between $KX$ and $KY$ where $X$ is a right $M$-set and $Y$ is a left $M$-set.  This makes use of the classifying space of a certain category whose nerve is the corresponding two-sided bar construction. These results will only be used to prove  Corollary~\ref{c:homological.epi} and its consequence Corollary~\ref{c:ext.inflate.mn} and so can be omitted on a first reading.

Let $X$ be a right $M$-set and $Y$ a left $M$-set.  Define a category \mbox{$X\rtimes M\ltimes Y$} with object set $X\times Y$ and arrow set $X\times M\times Y$.  The arrow $(x,m,y)$ goes from $(xm,y)$ to $(x,my)$. We often draw the arrow $(x,m,y)$ as $(xm,y)\xrightarrow{\,m\,}(x,my)$.   The product of two composable arrows is given by \[(x,m,m'y )(xm ,m',y) = (x,mm',y).\] The identity arrow at $(x,y)$ is $(x,1,y)$.  This construction is functorial in the pair $(X,Y)$.  The classifying space $\mathcal B(X\rtimes M\ltimes Y)$ will be denoted $\mathcal B(X,M,Y)$ for brevity and to agree with standard notation for two-sided bar constructions.  The $0$-simplices of the nerve are the elements of $X\times Y$.  A $q$-simplex, for $q\geq 1$, is of the form
\begin{equation}\label{eq:twosidedbar}
\begin{split}
(xm_q\cdots m_1,y)\xrightarrow{m_1} (xm_q\cdots m_2,m_1y)\xrightarrow{m_2} \cdots\\ \qquad \xrightarrow{m_{q-1}} (xm_q,m_{q-1}\cdots m_1y) \xrightarrow{m_q} (x,m_q\cdots m_1y).
\end{split}
\end{equation}
The $q$-simplex in \eqref{eq:twosidedbar} will be denoted $(x,m_q,\ldots, m_1,y)$ for convenience, and hence we may identity the set of $q$-simplices with $X\times M^q\times Y$ for $q\geq 0$. A $q$-simplex $(x,m_q,\ldots, m_1,y)$ is degenerate if and only if $m_i=1$ for some $i$.  If $K$ is a commutative ring, then the boundary map $d\colon C_q(\mathcal B(X,M,Y),K)\to C_{q-1}(\mathcal B(X,M,Y),K)$ for $q\geq 1$ is given by
\begin{equation}\label{eq:boundary.twosided}
\begin{split}
d(x,m_q,\ldots, m_1,y) &= (x,m_q,\ldots, m_2, m_1y)+\sum_{i=1}^{q-1}(-1)^i(x,m_q,\ldots, m_{i+1}m_i,\ldots, m_1, y)\\ &\quad  +(-1)^q(xm_q,m_{q-1},\ldots,m_1,y)
\end{split}
\end{equation}
where, as usual, a degenerate simplex is identified with $0$.

Classical two-sided bar resolutions imply that \[\mathrm{Tor}^{KM}_q(KX,KY)\cong H_n(\mathcal B(X,M,Y),K)\] for $q\geq 0$.  Let us give a short topological proof.  Let $Y$ be a left $M$-set and again denote by $U(M)$ the underlying set of $M$, which is a right $M$-set via multiplication.   Note that $M$ has a left action on  $U(M)\rtimes M\ltimes Y$ via $m'(m,y) = (m'm,y)$ on objects and $m'(m_0,m,y) = (m'm_0,m,y)$ on arrows.  Therefore, we have that $C_\bullet(\mathcal B(U(M),M,Y),K)$  is a chain complex of $KM$-modules.  Moreover, $C_q(\mathcal B(U(M),M,Y),K)$ has $K$-basis $M\times (M\setminus \{1\})^q\times Y$, and is, in fact, a free $KM$-module with basis $\{1\}\times (M\setminus \{1\})^q\times Y$ (which we identify with $(M\setminus \{1\})^q\times Y$).  From now on we write $m(m_q,\ldots, m_1,y)$ for the $q$-simplex $(m,m_q,\ldots, m_1,y)$.  It follows from \eqref{eq:boundary.twosided} that on the $KM$-basis $(M\setminus \{1\})^q\times Y$ for $C_q(\mathcal B(U(M),M,U(M)),K)$, we have that
\begin{equation}\label{eq:boundary.twosided.bar.regular}
\begin{split}
d_q(m_q,\ldots, m_1,y) &= (m_q,\ldots, m_2,m_1y)+\sum_{i=1}^{q-1}(-1)^i(m_q,\ldots, m_{i+1}m_i,\ldots, m_1,y)\\ &\quad +(-1)^qm_q(m_{q-1},\ldots,m_1,y)
\end{split}
\end{equation}
with the usual convention that degenerate simplices are treated as $0$.

\begin{Prop}\label{p:free.res}
Let $M$ be a monoid,  $Y$ a left $M$-set and $K$ a commutative ring.  Then the augmented chain complex \[C_\bullet(\mathcal B(U(M),M,Y),K)\to KY\] is a free resolution of $KY$ as a $KM$-module, where the augmentation \[\epsilon\colon C_0(\mathcal B(U(M),M,Y),K)\to KY\] is given by $\epsilon(m,y)= my$ for $(m,y)\in U(M)\times Y$.
\end{Prop}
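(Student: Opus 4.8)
The plan is to combine the freeness already recorded in the text with a topological computation of the homology of $\mathcal B(U(M),M,Y)$. Since it has been observed that $C_q(\mathcal B(U(M),M,Y),K)$ is a free $KM$-module with basis $(M\setminus\{1\})^q\times Y$, what remains is to check that the augmented complex $\cdots\to C_1(\mathcal B(U(M),M,Y),K)\to C_0(\mathcal B(U(M),M,Y),K)\xrightarrow{\epsilon} KY\to 0$ is exact; this amounts to computing $H_n(\mathcal B(U(M),M,Y),K)$ and identifying $H_0$ with $KY$ via $\epsilon$.

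The key observation is that there is a functor $P\colon U(M)\rtimes M\ltimes Y\to Y$, where $Y$ is regarded as a discrete category, sending the object $(x,y)$ to $xy$; this is well defined on arrows because the arrow $(x,m,y)\colon (xm,y)\to(x,my)$ has both its source and its target sent to $xmy$ (using $(xm)y=x(my)$), so $P$ carries it to $\mathrm{id}_{xmy}$. I would then apply Quillen's Theorem~A to $P$. For $y\in Y$, the comma category $P/y$ is, since $Y$ has only identity arrows, the full subcategory of $U(M)\rtimes M\ltimes Y$ spanned by the objects $(x,z)$ with $xz=y$. This category has a terminal object, namely $(1,y)$: for any object $(m,w)$ of $P/y$ (so $mw=y$) the arrow $(1,m,w)\colon (m,w)\to(1,y)$ is the unique arrow from $(m,w)$ to $(1,y)$ in $U(M)\rtimes M\ltimes Y$. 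Hence $\mathcal B(P/y)$ is contractible, and Quillen's Theorem~A gives that $\mathcal BP\colon \mathcal B(U(M),M,Y)\to Y$ is a homotopy equivalence.

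Consequently $H_n(\mathcal B(U(M),M,Y),K)\cong H_n(Y,K)$, which vanishes for $n\geq 1$ and equals $KY$ for $n=0$; moreover $\pi_0(\mathcal B(U(M),M,Y))\to Y$ is the bijection induced by $P$, sending the path component of the $0$-simplex $(x,y)$ to $xy$. Since $H_0$ of a classifying space is the free $K$-module on its set of path components, and the augmentation sends $(m,y)$ to $my$, the composite $C_0(\mathcal B(U(M),M,Y),K)\twoheadrightarrow H_0(\mathcal B(U(M),M,Y),K)\xrightarrow{\ \cong\ }KY$ is exactly $\epsilon$. Therefore $\epsilon$ is surjective with kernel the image of $d_1$, while $\ker d_q=\mathrm{im}\,d_{q+1}$ for $q\geq 1$; together with the freeness of the $C_q$ as $KM$-modules (and $KM$-linearity of $\epsilon$, which is immediate), this shows the augmented complex is a free resolution of $KY$.

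I do not anticipate a genuine obstacle here: the only points needing care are the verification that $P$ is a well-defined functor (immediate from the associativity identity $(xm)y=x(my)$) and the identification of the induced map on $H_0$ with $\epsilon$ (a matter of tracing definitions through the isomorphism $H_0(Z,K)\cong K[\pi_0(Z)]$). If one prefers to avoid Quillen's Theorem~A, one can instead observe that $y\mapsto(1,y)$ is a right adjoint to $P$ and invoke the fact that a functor with an adjoint induces a homotopy equivalence of classifying spaces, or argue directly that each path component of $U(M)\rtimes M\ltimes Y$ has a terminal object; a purely algebraic alternative is to exhibit the contracting homotopy inserting a $1$ in the leftmost slot (with degenerate simplices set to zero), but the topological argument is cleaner and matches the surrounding development.
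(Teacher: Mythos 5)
Your proof is correct and is essentially the paper's argument: both rest on the observation that the full subcategory of $U(M)\rtimes M\ltimes Y$ on the objects $(m,w)$ with $mw=y$ has terminal object $(1,y)$, hence contractible classifying space, and that $\epsilon$ restricts on each such piece to the standard augmentation. The paper simply notes directly that $U(M)\rtimes M\ltimes Y$ is the disjoint union of these subcategories rather than packaging the decomposition as an application of Quillen's Theorem~A to the functor $P$ onto the discrete category $Y$, but this is a cosmetic difference.
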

\begin{proof}
Clearly $\epsilon$ is $M$-equivariant. Fix $y\in Y$ and let $\mathscr C_y$ be the full subcategory of $U(M)\rtimes M\ltimes Y$ on the objects $(m,y')$ with $my'=y$.  Note that an arrow $(m_0,m,y)$ belongs to $\mathscr C_{m_0my}$, and so $U(M)\rtimes M\ltimes Y=\coprod_{y\in Y}\mathscr C_y$.  Moreover, if $(m,y')$ is an object of $\mathscr C_y$, then $\epsilon(m,y')=y$.  Thus the restriction of $\epsilon$ to $C_0(\mathcal BC_y,K)$ is the standard augmentation map from topology, multiplied  on the right by $y$.  Therefore, it suffices to show that $\mathcal B\mathscr C_y$ is contractible for all $y\in Y$. But $\mathscr C_y$ has terminal object $(1,y)$ since if $my'=y$, then $(1,m,y')$ is the unique arrow $(m,y')\to (1,y)$.    Thus $\mathcal B\mathscr C_y$ is contractible, completing the proof.
\end{proof}

We may now prove a generalization of Proposition~\ref{p:pass.to.relative} (which is the special case where $Y$ is a one-point set).  The proof is nearly identical, and so we shall be brief.

\begin{Thm}\label{t:compute.tor}
Let $M$ be a monoid and $K$ a commutative ring. Let $X$ be a right $M$-set and $Y$ a left $M$-set.
\begin{enumerate}
\item  $\Tor^{KM}_i(KX,KY)\cong H_i(\mathcal B(X,M,Y),K)$ for all $i\geq 0$.
\item  $\Ext_{KM}^i(KY,K^X)\cong H^i(\mathcal B(X,M,Y),K)$ for all $i\geq 0$.
\end{enumerate}
\end{Thm}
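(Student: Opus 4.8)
The plan is to feed the free $KM$-resolution of $KY$ produced in Proposition~\ref{p:free.res} into the two balanced functors. Recall that $C_\bullet(\mathcal B(U(M),M,Y),K)\to KY$ is a resolution of $KY$ by $KM$-modules in which $C_q(\mathcal B(U(M),M,Y),K)$ is $KM$-free with basis identified with $(M\setminus\{1\})^q\times Y$ and boundary map given on this basis by~\eqref{eq:boundary.twosided.bar.regular}. For (1) I would apply $KX\otimes_{KM}-$ to this resolution and compute $\Tor$; for (2) I would apply $\Hom_{KM}(-,K^X)$ and compute $\Ext$. In both cases the point is to recognize the resulting (co)chain complex, on the nose, as the (co)chain complex of $\mathcal B(X,M,Y)$ recorded in the preamble to this theorem.

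For (1): since $C_q(\mathcal B(U(M),M,Y),K)$ is $KM$-free on $(M\setminus\{1\})^q\times Y$, the $K$-module $KX\otimes_{KM}C_q(\mathcal B(U(M),M,Y),K)$ has $K$-basis $X\times(M\setminus\{1\})^q\times Y$, where $x\otimes m(m_q,\ldots,m_1,y)$ is rewritten as $xm\otimes(m_q,\ldots,m_1,y)$ using the tensor relation; this matches the $K$-basis of $C_q(\mathcal B(X,M,Y),K)$. Comparing~\eqref{eq:boundary.twosided.bar.regular} (with the left $KM$-action absorbed into the right $M$-action on $X$, so that the trailing term $(-1)^q m_q(m_{q-1},\ldots,m_1,y)$ becomes $(-1)^q(xm_q,m_{q-1},\ldots,m_1,y)$) with~\eqref{eq:boundary.twosided} shows that the assignment $x\otimes(m_q,\ldots,m_1,y)\mapsto(x,m_q,\ldots,m_1,y)$, sending degenerate simplices to $0$, is an isomorphism of chain complexes, manifestly natural in $X$ and $Y$. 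Passing to homology gives $\Tor^{KM}_i(KX,KY)\cong H_i(\mathcal B(X,M,Y),K)$.

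For (2): applying $\Hom_{KM}(-,K^X)$ to the same resolution and using freeness of $C_q(\mathcal B(U(M),M,Y),K)$ on $(M\setminus\{1\})^q\times Y$ identifies $\Hom_{KM}(C_q(\mathcal B(U(M),M,Y),K),K^X)$ with $(K^X)^{(M\setminus\{1\})^q\times Y}$, which currying identifies with $K^{X\times(M\setminus\{1\})^q\times Y}=C^q(\mathcal B(X,M,Y),K)$, exactly as in the proof of Proposition~\ref{p:pass.to.relative}(2); a routine check, dual to the boundary computation above, shows this is a cochain isomorphism natural in $X$ and $Y$, whence $\Ext^i_{KM}(KY,K^X)\cong H^i(\mathcal B(X,M,Y),K)$. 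The only labor is the bookkeeping: tracking which side of $X$ (and, in (2), of the hom) the module action lands on, matching the three families of terms in the boundary and coboundary formulas, and keeping the degenerate-simplex convention consistent. No genuine obstacle arises, since the substantive content — that $\mathcal B(U(M),M,Y)$ computes a free resolution of $KY$ — was already settled by the terminal-object argument in Proposition~\ref{p:free.res}; this is why one can afford to be brief, and why I would not route the cohomology statement through Theorem~\ref{t:ext.dual.tor} (which would impose unwanted hypotheses on $K$).
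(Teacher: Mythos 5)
Your proposal is correct and follows essentially the same route as the paper: apply $KX\otimes_{KM}-$ (respectively $\Hom_{KM}(-,K^X)$, via currying) to the free resolution from Proposition~\ref{p:free.res} and identify the result with the (co)chain complex of $\mathcal B(X,M,Y)$ by comparing \eqref{eq:boundary.twosided} with \eqref{eq:boundary.twosided.bar.regular}. The paper likewise avoids routing the $\Ext$ statement through Theorem~\ref{t:ext.dual.tor}.
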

\begin{proof}
We begin with the first item.
Note $KX\otimes_{KM} C_q(\mathcal B(U(M),M,Y),K)$ is a free $K$-module with basis the elementary tensors $x\otimes (m_q,\ldots, m_1,y)$ with $x\in X$, $y\in Y$ and $m_i\in M\setminus \{1\}$ for $i=1,\ldots, q$.  This is isomorphic as a $K$-module to $C_q(\mathcal B(X,M,Y),K)$ via $x\otimes (m_q,\ldots, m_1,y)\mapsto (x,m_q,\ldots, m_1,y)$.  A comparison of \eqref{eq:boundary.twosided} and \eqref{eq:boundary.twosided.bar.regular} shows these maps yield an isomorphism of chain complexes.  Since $C_q(\mathcal B(U(M),M,Y),K)\to KY$ is a free resolution of $KY$ as a $KM$-module thanks to  Proposition~\ref{p:free.res}, we deduce that $\Tor^{KM}_i(KX,KY)\cong H_i(\mathcal B(X,M,Y),K)$ for all $i\geq 0$.

For the second item, note that $\Hom_{KM}(C_q(B(UM,M,Y),K),K^X)\cong (K^X)^{(M\setminus \{1\})^q\times Y}\cong K^{X\times (M\setminus \{1\})^q\times Y}\cong C^q(\mathcal B(X,M,Y),K)$ via the currying isomorphism.  The proof that this yields an isomorphism of cochain complexes is straightforward from \eqref{eq:boundary.twosided} and \eqref{eq:boundary.twosided.bar.regular}, and is nearly identical to the proof of Proposition~\ref{p:pass.to.relative}(2), and so we omit it.  As $C_q(\mathcal B(U(M),M,Y),K)\to KY$ is a free resolution of $KY$ as a $KM$-module by Proposition~\ref{p:free.res}, we conclude that $\Ext_{KM}^i(KY, K^X)\cong H^i(\mathcal B(X,M,Y),K)$ for all $i\geq 0$.
\end{proof}

Recall that an \emph{epimorphism} in a category $\mathscr C$ is an arrow $f$ such that $gf=hf$ implies that $g=h$, that is, $f$ is right cancellable.  Every surjective homomorphism of monoids (respectively, rings) is an epimorphism but in both these categories there are epimorphisms that are not surjective.  By Isbell's zig-zag theorem, a ring homomorphism $\p\colon R\to S$ is an epimorphism if and only if the multiplication map $S\otimes_R S\to S$ is an $S$-bimodule isomorphism.  There is an analogous result in the monoid context, also due to Isbell.

A ring homomorphism $\p\colon R\to S$ is called a \emph{homological epimorphism}~\cite{homologicalepi} if it is an epimorphism and if $\Tor^R_i(S,S)=0$ for all $i\geq 1$.  Note that $\Tor^R_0(S,S)\cong S\otimes _R S\cong S$ since $\p$ is an epimorphism.   The following theorem is part of~\cite[Theorem~4.4]{homologicalepi} and explains the importance of homological epimorphisms.

\begin{Thm}\label{t:homological.epi}
Let $\p\colon R\to S$ be a ring homomorphism.  Then the following are equivalent.
\begin{enumerate}
\item $\p$ is a homological epimorphism.
\item For all right $S$-modules $U$ and left $S$-modules $V$, the natural map $\Tor^R_i(U,V)\to \Tor^S_i(U,V)$ is an isomorphism for all $i\geq 0$.
\item For all left $S$-modules $U,V$, the natural map $\Ext_S^i(U,V)\to \Ext_R^i(U,V)$ is an isomorphism for all $i\geq 0$.
\item For all right $S$-modules $U,V$, the natural map $\Ext_S^i(U,V)\to \Ext_R^i(U,V)$ is an isomorphism for all $i\geq 0$.
\end{enumerate}
\end{Thm}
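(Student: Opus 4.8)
The plan is to establish $(1)\Rightarrow(2)$, $(1)\Rightarrow(3)$, $(1)\Rightarrow(4)$ together with the three easy converses $(2)\Rightarrow(1)$, $(3)\Rightarrow(1)$, $(4)\Rightarrow(1)$; since $(4)$ and $(4)\Rightarrow(1)$ are just $(3)$ and $(3)\Rightarrow(1)$ applied to $\varphi^{op}\colon R^{op}\to S^{op}$ (which is again a homological epimorphism, the defining conditions being left--right symmetric), the real content is the two forward statements for left modules plus two short converses. The whole argument rests on one reduction: if $\varphi$ is a homological epimorphism and $Q_\bullet\to S$ is a projective resolution of $S$ as a left $R$-module (via $\varphi$), then $S\otimes_R Q_\bullet\to S$ is a projective resolution of $S$ as a left $S$-module. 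Indeed each $S\otimes_R Q_j$ is a direct summand of a free $S$-module, hence projective; the homology of $S\otimes_R Q_\bullet$ is $\Tor^R_\bullet(S,S)$, which vanishes in positive degrees because $\varphi$ is a homological epimorphism and equals $S$ in degree $0$ because $\varphi$ is an epimorphism; and the induced augmentation $S\otimes_R Q_0\to S$ is the multiplication map, which is precisely the isomorphism $\Tor^R_0(S,S)\cong S$. The same argument, tensoring on the right, shows $P_\bullet\otimes_R S\to S$ is an $S$-projective resolution of $S$ for any projective resolution $P_\bullet\to S$ by right $R$-modules.

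For $(1)\Rightarrow(3)$: for any left $S$-module $V$ the extension--restriction adjunction gives $\Hom_R(Q_j,V)\cong\Hom_S(S\otimes_R Q_j,V)$ compatibly with the differentials, so the reduction above yields $\Ext^i_R(S,V)\cong\Ext^i_S(S,V)=0$ for $i\ge1$; taking coproducts and summands, $\Ext^i_R(P,V)=0$ for every projective left $S$-module $P$ and $i\ge1$. Hence, given left $S$-modules $U,V$, a projective resolution $P_\bullet\to U$ over $S$ consists of $\Hom_R(-,V)$-acyclic $R$-modules and so computes $\Ext^i_R(U,V)=H^i\Hom_R(P_\bullet,V)$; since $\varphi$ is an epimorphism, restriction of scalars from left $S$-modules to left $R$-modules is fully faithful, so $\Hom_R(P_\bullet,V)=\Hom_S(P_\bullet,V)$ and $\Ext^i_R(U,V)\cong\Ext^i_S(U,V)$, the isomorphism being readily checked to be the canonical comparison map. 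The argument for $(1)\Rightarrow(2)$ is entirely parallel on the Tor side: from $U\otimes_R Q_j\cong U\otimes_S(S\otimes_R Q_j)$ one gets $\Tor^R_i(U,S)=0$ for $i\ge1$ and a right $S$-module $U$, hence $\Tor^R_i(U,P)=0$ for projective left $S$-modules $P$; a projective $S$-resolution of $V$ then computes $\Tor^R_i(U,V)$, and $U\otimes_R P\cong U\otimes_S P$ on projective $S$-modules $P$ (again because $\varphi$ is an epimorphism) identifies it with $\Tor^S_i(U,V)$ via the natural map.

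For the converses, $(2)\Rightarrow(1)$ is immediate on setting $U=V=S$: the case $i=0$ says the multiplication map $S\otimes_R S\to S$ is an isomorphism, i.e.\ $\varphi$ is an epimorphism by the Isbell criterion recalled above, and the cases $i\ge1$ give $\Tor^R_i(S,S)\cong\Tor^S_i(S,S)=0$. For $(3)\Rightarrow(1)$, the case $i=0$ of $(3)$ says every $R$-linear map between left $S$-modules is $S$-linear, i.e.\ restriction of scalars is fully faithful, and this forces $\varphi$ to be an epimorphism (two ring maps out of $S$ agreeing after $\varphi$ make the identity an $R$-linear, hence $S$-linear, self-map of a suitable module, whence they agree). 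For the Tor vanishing, apply Theorem~\ref{t:ext.dual.tor} with $B=S$ regarded as an $S$-$R$-bimodule --- its hypothesis holds since $S\otimes_R P$ is a projective $S$-module for every projective $R$-module $P$ --- taking $A=S$ and $C=I$ an arbitrary injective left $S$-module, to obtain $\Ext^n_R(S,I)\cong\Hom_S(\Tor^R_n(S,S),I)$. By $(3)$ the left-hand side is $\Ext^n_S(S,I)=0$ for $n\ge1$, so $\Hom_S(\Tor^R_n(S,S),I)=0$ for every injective $I$; embedding $\Tor^R_n(S,S)$ into an injective then forces $\Tor^R_n(S,S)=0$ for $n\ge1$, so $\varphi$ is a homological epimorphism. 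As noted, $(4)$ and $(4)\Rightarrow(1)$ follow by replacing $\varphi$ with $\varphi^{op}$.

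I expect the main obstacle to be the converse $(3)\Rightarrow(1)$, and in particular recovering the vanishing of $\Tor^R_\bullet(S,S)$ --- a statement about balanced derived functors --- from an $\Ext$-isomorphism: this is where the $\Ext$--$\Tor$ duality of Theorem~\ref{t:ext.dual.tor} over injective coefficients, together with the fact that every module embeds in an injective, is essential. A secondary but persistent nuisance will be bookkeeping the various left, right, and bimodule structures carried by $S$, by $\Hom_S(S,I)$, and by $\Tor^R_\bullet(S,S)$, and checking at each stage that the isomorphisms produced agree with the canonical comparison maps rather than being merely abstract.
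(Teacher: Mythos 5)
Your proof is correct. There is, however, no in-paper argument to compare it against: the paper states this theorem as a quotation of \cite[Theorem~4.4]{homologicalepi} (Geigle--Lenzing) and gives no proof, so the only meaningful comparison is with the standard literature argument, which yours essentially reproduces. The load-bearing observation --- that for a homological epimorphism $\p$ the complex $S\otimes_R Q_\bullet$ is an $S$-projective resolution of $S$, so that $\Ext$ and $\Tor$ against projective $S$-modules vanish over $R$ in positive degrees and $S$-projective resolutions become acyclic resolutions over $R$ --- is exactly the Geigle--Lenzing mechanism, and your handling of the degree-zero data (Isbell's criterion for (2)$\Rightarrow$(1), full faithfulness of restriction of scalars for (3)$\Rightarrow$(1), with the two-module-structures trick showing full faithfulness forces $\p$ to be an epimorphism) is accurate. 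The one genuinely local touch is closing the loop in (3)$\Rightarrow$(1) by invoking Theorem~\ref{t:ext.dual.tor} with $B=S$ and injective coefficients to convert the $\Ext$-isomorphism into $\Hom_S(\Tor^R_n(S,S),I)=0$ for all injectives $I$, hence $\Tor^R_n(S,S)=0$; its hypothesis is satisfied because $S\otimes_R P$ is $S$-projective for $P$ an $R$-projective, so this is a legitimate and tidy way to recover the $\Tor$-vanishing from an $\Ext$-statement. The reduction of (4) to (3) via $\p^{op}$ is also sound, since both the epimorphism condition and the vanishing of $\Tor^R_{\geq 1}(S,S)$ are invariant under passing to opposite rings.
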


We can now give a topological characterization of when a monoid homomorphism induces a homological epimorphism.  This is analogous to a result of Nunes~\cite{Nunes}, who was interested in monoid cohomology and therefore used a one-sided bar construction.

Let $\p\colon M\to N$ be a homomorphism of monoids.  Then we can view $N$ as both a right and left $M$-set via $\p$.  The category $N\rtimes M\ltimes N$ was considered by Rhodes and Tilson~\cite{Kernel} as a substitute for the kernel of a monoid homomorphism and is called the kernel category of $\p$ (actually they call a certain quotient of this category the kernel category).  In~\cite{Cats2} this category is denoted $\ker(\p)$, and we shall use that notation here. Note that for a group homomorphism $\p\colon G\to H$, the category $\ker (\p)$ is a groupoid equivalent to the disjoint union of $|H|$ of copies of the group theoretic kernel of $\p$ (which we denote by $\ker \p$ without parentheses).

\begin{Thm}\label{t:homological.epi.monoids}
Let $\p\colon M\to N$ be a monoid homomorphism and $K$ a commutative ring.  Then the extension $\ov\p\colon KM\to KN$ is a homological epimorphism if and only if $\p$ is a monoid epimorphism and $H_i(\mathcal B(N,M,N),K)=0$ for all $i\geq 1$.
\end{Thm}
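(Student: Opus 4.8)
The plan is to combine the characterization of homological epimorphisms in Theorem~\ref{t:homological.epi} with the topological computation of $\Tor$ in Theorem~\ref{t:compute.tor}(1). First I would record that $\overline\p\colon KM\to KN$ is a homological epimorphism if and only if it is a ring epimorphism and $\Tor^{KM}_i(KN,KN)=0$ for all $i\geq 1$; this is the definition, but one should observe that the $\Tor_0$ condition $KN\otimes_{KM}KN\cong KN$ is automatic from the epimorphism condition. So the statement splits into two halves: the ring-epimorphism half and the higher-$\Tor$ vanishing half.

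For the higher-$\Tor$ half, I would apply Theorem~\ref{t:compute.tor}(1) with $X=Y=N$ viewed as a right (respectively left) $M$-set via $\p$; this gives $\Tor^{KM}_i(KN,KN)\cong H_i(\mathcal B(N,M,N),K)$ for all $i\geq 0$, where $\mathcal B(N,M,N)=\mathcal B(N\rtimes M\ltimes N)$ is the classifying space of the kernel category $\ker(\p)$. Hence $\Tor^{KM}_i(KN,KN)=0$ for all $i\geq 1$ if and only if $H_i(\mathcal B(N,M,N),K)=0$ for all $i\geq 1$. So the content of the theorem reduces to showing that, \emph{given} this homology vanishing, the condition that $\overline\p$ is a ring epimorphism is equivalent to the condition that $\p$ is a monoid epimorphism; and conversely, that if $\overline\p$ is a homological epimorphism then $\p$ is a monoid epimorphism (the homology vanishing being handled by the isomorphism just established).

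The main obstacle, then, is the interplay between ``$\p$ is a monoid epimorphism'' and ``$\overline\p$ is a ring epimorphism.'' One direction is easy: a monoid epimorphism $\p\colon M\to N$ induces a ring epimorphism $KM\to KN$, since any two ring homomorphisms out of $KN$ agreeing on the image of $M$ agree on the image of $N$ by the monoid epimorphism property, hence agree on all of $KN$ by $K$-linearity. For the converse I would use the $n=0$ case of the Tor computation: $\overline\p$ is a ring epimorphism iff the multiplication map $KN\otimes_{KM}KN\to KN$ is an isomorphism (Isbell's zig-zag for rings), and $KN\otimes_{KM}KN\cong C_0(\mathcal B(N,M,N),K)/\mathrm{im}\, d_1 = H_0(\mathcal B(N,M,N),K)$ as the augmented chain complex $C_\bullet(\mathcal B(U(M),M,N),K)\to KN$ is a free resolution (Proposition~\ref{p:free.res}), so $KN\otimes_{KM}KN\to KN$ being an isomorphism is exactly $H_0(\mathcal B(N,M,N),K)\cong KN$ via the augmentation. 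On the other hand, by Remark~\ref{r:path.comp.bij} (applied to the kernel category, whose path components biject with those of $\mathcal B(\Omega\cdots)$, or more directly by inspecting $\pi_0$ of $\mathcal B(N,M,N)$), the set of path components of $\mathcal B(N,M,N)$ is exactly $N\otimes_M N$ as a set (the quotient of $N\times N$ by the relation generated by $(x\p(m),y)\sim(x,\p(m)y)$), and $H_0$ with $K$ coefficients is the free $K$-module on $\pi_0$; so $H_0(\mathcal B(N,M,N),K)\cong KN$ (via the augmentation, which on $\pi_0$ is induced by $(x,y)\mapsto xy$) precisely when the map $N\otimes_M N\to N$ is a bijection, which by Isbell's zig-zag theorem for monoids is precisely the condition that $\p$ is a monoid epimorphism. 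I would therefore structure the write-up as: (1) reduce via Theorem~\ref{t:compute.tor}(1) to the statement that $\overline\p$ is a homological epimorphism iff $\overline\p$ is a ring epimorphism and $H_i=0$ for $i\geq1$; (2) identify $H_0(\mathcal B(N,M,N),K)$ with $KN\otimes_{KM}KN$ and its augmentation with the multiplication map; (3) invoke the two versions of Isbell's zig-zag theorem (for rings and for monoids) together with the identification of $\pi_0(\mathcal B(N,M,N))$ with $N\otimes_M N$ to conclude that, in the presence of the higher vanishing, $\overline\p$ ring epimorphism $\iff$ $\p$ monoid epimorphism. The delicate point to get right is that one wants ``$\p$ monoid epi'' in the final statement rather than the a priori weaker ``$\overline\p$ ring epi,'' and the above shows these coincide because the relevant $\Tor_0$/$H_0$ computation is visibly the same object as the monoid tensor product $N\otimes_M N$.
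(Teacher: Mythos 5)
Your proof is correct, and its skeleton --- reducing via Theorem~\ref{t:compute.tor}(1) and the definition of homological epimorphism to the single remaining claim that ``$\p$ is a monoid epimorphism iff $\ov\p$ is a ring epimorphism'' --- is exactly the paper's. The difference lies in how that equivalence is proved. Your ``monoid epi $\Rightarrow$ ring epi'' direction is the paper's equalizer argument in compressed form (when writing it up, note explicitly that the restrictions to $N$ of two ring maps out of $KN$ are monoid homomorphisms into the multiplicative monoid of the target, and that agreement on $K\cdot 1$ comes from agreement on $\ov\p(KM)$). For ``ring epi $\Rightarrow$ monoid epi'' the paper just linearizes a pair of monoid homomorphisms $\alpha,\beta\colon N\to N'$ with $\alpha\p=\beta\p$ and uses that $N'$ embeds in $KN'$ --- a two-line argument --- whereas you route through the tensor-product characterizations of epimorphisms of rings and of monoids together with the identifications $\pi_0(\mathcal B(N,M,N))\cong N\otimes_M N$ and $H_0(\mathcal B(N,M,N),K)\cong K[\pi_0]$. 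That route is valid (your appeal to Remark~\ref{r:path.comp.bij} is not quite the right citation, since that remark concerns $X\rtimes M\to\Omega(X)$, but your direct inspection of $\pi_0$ of the kernel category is correct), and it has the virtue of exhibiting the $\Tor_0$ condition and the monoid epimorphism condition as literally the same $\pi_0$ computation. On the other hand it is heavier: it invokes the Isbell-type characterization for monoids that the paper only mentions in passing, and the step ``the $K$-linearization of a set map is an isomorphism iff the map is a bijection'' requires $K\neq 0$ --- an assumption implicit in the theorem itself (for $K=0$ the right-hand side can fail while the left-hand side holds vacuously), and one the paper's elementary argument also uses when deducing $\alpha=\beta$ from $\ov\alpha=\ov\beta$.
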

\begin{proof}
In light of Theorem~\ref{t:compute.tor}, it suffices to show that $\p$ is an epimorphism if and only if $\ov \p$ is one.  Trivially, if $\ov\p$ is an epimorphism and $\alpha,\beta\colon N\to N'$ are monoid homomorphisms with $\alpha\p=\beta\p$, then $\ov\alpha\ov\p=\ov \beta\ov \p$ and so $\ov \alpha=\ov\beta$.  But this implies $\alpha=\beta$.  Conversely, suppose that $\p$ is a monoid epimorphism and $\alpha,\beta\colon KN\to R$ are ring homomorphisms such that $\alpha \ov \p=\beta\ov \p$.  Let $E=\{a\in KN\mid \alpha(a)=\beta(a)\}$ be the equalizer of $\alpha$ and $\beta$.  Then $E$ is a subring of $KN$ and we aim to show that $E=KN$.  By hypothesis, $K\cdot 1\subseteq \ov \p(KM)\subseteq E$, and so $E$ is a $K$-subalgebra.  It therefore, suffices to show that $N\subseteq E$.  Since $\p$ is an epimorphism and $\alpha|_N\p = (\alpha\ov \p)|_M=(\beta \ov \p)|_M = \beta|_N\p$, we deduce that $\alpha|_N=\beta|_N$, that is, $N\subseteq E$, as required.  This completes the proof.
\end{proof}

\subsection{Applications to semidirect and crossed products}

Using an idea from~\cite{Kernel}, we show that if $N$ is a monoid acting on the left of a monoid $M$ by endomorphisms and $\pi\colon M\rtimes N\to N$ is the semidirect product projection, then the induced homomorphism $\ov \pi\colon K[M\rtimes N]\to KN$ is a homological epimorphism if and only if the classifying space $\mathcal BM$ of $M$ is $K$-acyclic.  This should be compared with~\cite{Nunes} where an analogue for monoid cohomology was proved (and which is a consequence of our result).  It turns out that we can work with a more general construction than the semidirect product, which is the monoid analogue of the group crossed product~\cite[Chapter IV.8]{MacHomology}.  I presume this is folklore, but the only explicit reference that I  found was~\cite{crossedproduct}, which gives few details.

By a \emph{normalized crossed system} over the pair of monoids $M,N$, we mean a $4$-tuple $(\alpha, c,M,N)$ where $\alpha\colon N\to \End(M)$ and $c\colon N\times N\to M$ are mappings such that for all $m\in M$ and $n,n_1,n_2,n_3\in N$:
\begin{itemize}
\item [(C1)] $\alpha_{n_1}(\alpha_{n_2}(m))c(n_1,n_2) = c(n_1,n_2)\alpha_{n_1n_2}(m)$;
\item [(C2)] $c(n_1,n_2)c(n_1n_2,n_3) = \alpha_{n_1}(c(n_2,n_3))c(n_1,n_2n_3)$;
\item [(C3)] $\alpha_1(m)=m$ (i.e., $\alpha_1=1_M$);
\item [(C4)] $c(1,n)=1=c(n,1)$;
\end{itemize}
where we write $\alpha_n$ for $\alpha(n)$.
One calls $\alpha$ a weak action and $c$ an $\alpha$-cocycle.  Note that if $c$ is trivial (i.e., $c(n_1,n_2)=1$ for all $n_1,n_2$), then (C1) and (C3) imply that $\alpha$ is a homomorphism.  However, we do not in general require $\alpha$ to be a homomorphism.

Let $G$ be the group of units of $M$.  Then $G^{N\setminus \{1\}}$ acts on the set of normalized crossed systems with respect to $M,N$. If $f\in G^{N\setminus \{1\}}$, then $f\cdot (\alpha,c,M,N)= (\alpha',c',M,N)$ is defined as follows.  Put $f(1)=1$ for convenience.   Then $\alpha'_n(m) = f(n)\alpha_n(m)f(n)\inv$ and \[c'(n_1,n_2) = f(n_1)\alpha_{n_1}(f(n_2))c(n_1,n_2)f(n_1n_2)\inv.\]  We say that two crossed systems are cohomologous if they are in the same orbit of this action.  Notice that if $M$ is an abelian group, then (C1) and (C3) imply that $\alpha$ is a homomorphism, and so one can identify an equivalence class of crossed systems over $M,N$ with a $\mathbb ZN$-module structure on $M$ together with a cohomology class in $H^2(N,M)$ with respect to this module structure on $M$ (as $2$-cocycles in the normalized cochain complex for $M$ are maps satisfying (C2) and (C4), and the two notions of being cohomologous coincide).

Let $(\alpha,c,M,N)$ be a normalized crossed system.  Then we can define the \emph{crossed product} $M\rtimes_{\alpha,c} N$ to be the monoid with underlying set $M\times N$, product
\[(m_1,n_1)(m_2,n_2) = (m_1\alpha_{n_1}(m_2)c(n_1,n_2), n_1n_2)\]
and identity element $(1,1)$.  It is straightforward to verify that conditions (C1)--(C4) are equivalent to this product being associative with identity $(1,1)$.  In the case that $\alpha$ is a homomorphism and $c$ is trivial, we obtain the semidirect product $M\rtimes_{\alpha} N$ (or simply $M\rtimes N$ if $\alpha$ is understood).  Note that $\pi\colon M\rtimes_{\alpha,c} N\to N$ given by $\pi(m,n)=n$ is a homomorphism and $\pi\inv(1) = M\times \{1\}\cong M$.

\begin{Prop}\label{p:equiv.crossed}
If $(\alpha,c,M,N)$ and $(\alpha',c',M,N)$ are equivalent normalized crossed systems, then $M\rtimes_{\alpha,c} N\cong M\rtimes_{\alpha',c'} N$ via an isomorphism commuting with the projection to $N$.
\end{Prop}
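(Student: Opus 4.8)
The statement is the monoid analogue of the classical fact that cohomologous group $2$-cocycles yield isomorphic group extensions, and I would prove it by writing down the expected isomorphism explicitly. Since the two systems are cohomologous, fix $f\in G^{N\setminus\{1\}}$, extended by the convention $f(1)=1$, with $f\cdot(\alpha,c,M,N)=(\alpha',c',M,N)$, so that $\alpha'_n(m)=f(n)\alpha_n(m)f(n)^{-1}$ and $c'(n_1,n_2)=f(n_1)\alpha_{n_1}(f(n_2))c(n_1,n_2)f(n_1n_2)^{-1}$. Then I would define $\Psi\colon M\rtimes_{\alpha,c}N\to M\rtimes_{\alpha',c'}N$ by $\Psi(m,n)=(mf(n)^{-1},n)$. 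This map visibly commutes with the projections onto $N$ (it fixes the second coordinate), it sends $(1,1)$ to $(1,1)$ because $f(1)=1$, and it is a bijection of underlying sets: for each fixed $n$, right multiplication by the unit $f(n)^{-1}$ is a bijection of $M$, so the two-sided inverse of $\Psi$ is $(m,n)\mapsto(mf(n),n)$. Hence once multiplicativity is checked, $\Psi$ is the required isomorphism.

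\textbf{Main step.}
The only real content is verifying $\Psi\big((m_1,n_1)(m_2,n_2)\big)=\Psi(m_1,n_1)\,\Psi(m_2,n_2)$, which I would do by expanding both sides. The left-hand side has first coordinate $m_1\alpha_{n_1}(m_2)c(n_1,n_2)f(n_1n_2)^{-1}$. For the right-hand side, using that $\alpha_{n_1}\in\End(M)$ is multiplicative and carries the unit $f(n_2)$ to a unit with $\alpha_{n_1}(f(n_2))^{-1}=\alpha_{n_1}(f(n_2)^{-1})$, one gets $\alpha'_{n_1}(m_2f(n_2)^{-1})=f(n_1)\alpha_{n_1}(m_2)\alpha_{n_1}(f(n_2)^{-1})f(n_1)^{-1}$; substituting this and the formula for $c'(n_1,n_2)$ into $m_1f(n_1)^{-1}\,\alpha'_{n_1}(m_2f(n_2)^{-1})\,c'(n_1,n_2)$, the pairs $f(n_1)^{-1}f(n_1)$ and $\alpha_{n_1}(f(n_2)^{-1})\alpha_{n_1}(f(n_2))$ collapse to $1$, leaving exactly $m_1\alpha_{n_1}(m_2)c(n_1,n_2)f(n_1n_2)^{-1}$. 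So the two sides agree. (Note that the axioms (C1)--(C4) for $(\alpha,c)$ are not used in this computation; they enter only through the hypothesis that both $(\alpha,c,M,N)$ and $(\alpha',c',M,N)$ are crossed systems, which is what makes source and target of $\Psi$ into monoids in the first place.)

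\textbf{Expected obstacle.}
I do not anticipate a genuine obstacle here; the argument is a direct, if slightly fiddly, bookkeeping exercise. The one point that deserves care is that all the inverses appearing above genuinely live inside $M$: this holds because $f$ takes values in the group of units $G$ and monoid endomorphisms preserve invertibility, so expressions such as $f(n)^{-1}$ and $\alpha_{n_1}(f(n_2))^{-1}$ are legitimate elements of $M$ and behave as formal inverses, which is exactly what the cancellations in the main step rely on. The degenerate cases where some $n_i=1$ require no separate treatment, since the convention $f(1)=1$ together with axioms (C3) and (C4) makes the formulas for $\alpha'$ and $c'$ reduce correctly and the computation above goes through verbatim.
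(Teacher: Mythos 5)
Your proof is correct and is essentially the paper's argument: the paper defines the same twist-by-$f$ map (written in the opposite direction, $(m,n)\mapsto(mf(n),n)$ from $M\rtimes_{\alpha',c'}N$ to $M\rtimes_{\alpha,c}N$) and leaves the multiplicativity check as a routine computation, which you have carried out correctly.
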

\begin{proof}
Suppose that $(\alpha',c',M,N) = f\cdot (\alpha,c,M,N)$ with $f\in G^{N\setminus \{1\}}$ and extend $f$ to $N$ by $f(1)=1$.  Then it is a routine computation to verify that $M\rtimes_{\alpha',c'} N\to M\rtimes_{\alpha,c} N$ given by $(m,n)\mapsto (mf(n),n)$ is an isomorphism.
\end{proof}

Crossed products also enjoy an internal characterization.
  If $(\alpha,c,M,N)$ is a normalized crossed system, then $s\colon N\to M\rtimes_{\alpha,c}N$ given by $s(n) =(1,n)$ is a set-theoretic section to the projection \[\pi\colon M\rtimes_{\alpha,c} N\to N\] with $s(1)=(1,1)$, $\pi\inv(1)\cong M$ and $s(N)$ a basis for the left $M$-set $M\rtimes_{\alpha,c} N$ (with action $m(m_1,n_1) = (mm_1,n_1)$).  Note that $s$ is a homomorphism if and only if $c$ is trivial (i.e., $M\rtimes_{\alpha,c} N$ is a semidirect product).  Conversely, such a section gives rise to a crossed product decomposition.

\begin{Prop}\label{p:crossed.abstract}
Let $\p\colon L\to N$ be a surjective homomorphism of monoids and put $M=\pinv(1)$.  Suppose that there is a set-theoretic section $s\colon N\to L$ of $\p$ such that $s(N)$ is a basis for $L$ as a left $M$-set and $s(1)=1$.  Then there is a normalized crossed system $(\alpha, c,M,N)$ with an isomorphism $\gamma\colon M\rtimes_{\alpha,c} N\to L$  given by $\gamma(m,n) = ms(n)$ over $N$.  Replacing $s$ by a different section with the same properties results in an equivalent normalized crossed system.  Moreover, if $s$ is a homomorphism, then $c$ can be taken to be trivial, and hence $L$ is isomorphic to a semidirect product of $M$ and $N$.
\end{Prop}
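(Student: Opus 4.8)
The plan is to read the crossed system $(\alpha,c,M,N)$ straight off the section $s$, using the basis property to make the definitions unambiguous, and then to verify the axioms (C1)--(C4) by computing the products $s(n_1)s(n_2)m$ and $s(n_1)s(n_2)s(n_3)$ in two different ways.

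First observe that $M=\pinv(1)$ is a submonoid of $L$ (it contains $1$ and is closed under products since $\p$ is a homomorphism), and that the hypothesis says precisely that every $\ell\in L$ is uniquely of the form $\ell=m\,s(n)$ with $m\in M$ and $n\in N$; applying $\p$ forces $n=\p(\ell)$. I would then define $\alpha_n\in\End(M)$ by the relation $s(n)m=\alpha_n(m)\,s(n)$ (the right-hand side being the basis expression of $s(n)m\in\pinv(n)$) and $c\colon N\times N\to M$ by $s(n_1)s(n_2)=c(n_1,n_2)\,s(n_1n_2)$. That each $\alpha_n$ is an endomorphism, and that (C3) ($\alpha_1=1_M$) and (C4) ($c(1,n)=1=c(n,1)$) hold, follow at once from $s(1)=1$ and the uniqueness of basis expressions. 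For (C1) I would expand $s(n_1)\bigl(s(n_2)m\bigr)$ and $\bigl(s(n_1)s(n_2)\bigr)m$ and equate the coefficients of $s(n_1n_2)$ in the two resulting basis expressions; for (C2), do the same for the two bracketings of $s(n_1)s(n_2)s(n_3)$.

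For the isomorphism, $\gamma(m,n)=m\,s(n)$ is a bijection exactly because $s(N)$ is a basis, it satisfies $\p\gamma(m,n)=n$, and the computation $m_1 s(n_1)\cdot m_2 s(n_2)=m_1\alpha_{n_1}(m_2)\,s(n_1)s(n_2)=m_1\alpha_{n_1}(m_2)c(n_1,n_2)\,s(n_1n_2)$ shows it transports the crossed-product multiplication to that of $L$; since it sends $(1,1)$ to $1$, it is an isomorphism over $N$.

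For the uniqueness clause, given a second section $s'$ with the same properties, I would write $s'(n)=f(n)\,s(n)$ with $f(n)\in M$ and $f(1)=1$. The one place the full hypothesis is needed is in checking $f(n)\in G$: since $s'(N)$ is also a basis, write $s(n)=g(n)\,s'(n)$ and conclude $f(n)g(n)=1=g(n)f(n)$ from uniqueness, so $f(n)\inv=g(n)$. Substituting $s'(n)=f(n)s(n)$ and $s(n)=f(n)\inv s'(n)$ into the defining relations for the crossed system of $s'$ then reproduces exactly $\alpha'_n(m)=f(n)\alpha_n(m)f(n)\inv$ and $c'(n_1,n_2)=f(n_1)\alpha_{n_1}(f(n_2))c(n_1,n_2)f(n_1n_2)\inv$, i.e.\ the system of $s'$ is $f\cdot(\alpha,c,M,N)$. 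Finally, if $s$ is a homomorphism then $s(n_1)s(n_2)=s(n_1n_2)$ gives $c\equiv 1$, so the crossed product collapses to the semidirect product $M\rtimes_{\alpha}N$ (and then (C1), (C3) make $\alpha$ a homomorphism). The argument is essentially all bookkeeping; the only points requiring a little care are the invertibility of $f(n)$ and keeping the order of the factors correct in the equivalence formulas.
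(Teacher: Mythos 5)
Your proposal is correct and follows essentially the same route as the paper's own (sketched) proof: define $\alpha_n$ and $c$ from the unique basis expressions $s(n)m=\alpha_n(m)s(n)$ and $s(n_1)s(n_2)=c(n_1,n_2)s(n_1n_2)$, derive (C1)--(C4) from associativity in $L$ and $s(1)=1$, and for the equivalence clause deduce $f(n)\in G$ from the mutual inverses $f(n)g(n)=1=g(n)f(n)$ supplied by the two basis properties. No gaps.
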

\begin{proof}
We only sketch the straightforward argument.  As usual, $G$ will denote the group of units of $M$.  First note that we have $\pinv(n) = Ms(n)$ since if $a\in L$, then $a=ms(n)$ for some $m\in M$, $n\in N$, and  $\p(a) =\p(m)\p(s(n)) = n$.   In particular, if $m\in M$ and $n\in N$, then $\p(s(n)m) = n$, and so $s(n)m =\alpha_n(m)s(n)$ for a unique element $\alpha_n(m)\in M$.  We claim that $\alpha_n\in \End(M)$.  Indeed, $\alpha_n(mm')s(n) = s(n)(mm') = (s(n)m)m' = \alpha_n(m)(s(n)m') = \alpha_n(m)\alpha_n(m')s(n)$, whence $\alpha_n(mm')=\alpha_n(m)\alpha_n(m')$.   Also, $s(n)1=1s(n)$ implies $\alpha_n(1)=1$.  Trivially, $\alpha_1=1_M$ as $s(1)=1$.   Define $c\colon N\times N\to M$ by $s(n_1)s(n_2) = c(n_1,n_2)s(n_1n_2)$ (using that $\p(s(n_1)s(n_2))=n_1n_2$); notice that $c$ is trivial if $s$ is a homomorphism.  Axioms (C1)--(C4) follow easily from associativity in $L$ and from $s(1)=1$.  One can immediately check that $\gamma$ as defined above is an isomorphism.

If $s'\colon N\to L$ is another section with $s'(1)=1$ and $s'(N)$ a basis for $L$ as a left $M$-set, then for each $n\in N\setminus \{1\}$, we can find $a_n,b_n\in M$ with $a_ns(n)=s'(n)$ and $b_ns'(n)=s(n)$ as $\rho(s(n))=n=\rho(s'(n))$.  Then $a_nb_ns'(n)=s'(n)$, and so $a_nb_n=1$ by definition of a basis and, similarly, $b_na_n=1$. Thus $a_n\in G$ for all $n\in N\setminus \{1\}$.  Define $f\in G^{N\setminus \{1\}}$ by $f(n) =a_n$.  It is routine to verify that the crossed system associated to $s'$ is $f\cdot (\alpha,c,M,N)$.
\end{proof}

Note that every surjective group homomorphism $\p\colon G\to H$ satisfies the conditions of Proposition~\ref{p:crossed.abstract}, and so $G\cong \ker \p\rtimes_{\alpha,c} H$ for an appropriate choice of $\alpha,c$.  This is of course well known~\cite[Chapter IV.8]{MacHomology}.

The main result of this subsection determines when the projection from a crossed product (and, in particular, semidirect product) induces a homological epimorphism of monoid algebras.

\begin{Thm}\label{t:semidirect}
Let $(\alpha,c,M,N)$ be a normalized crossed system and let $\pi\colon M\rtimes_{\alpha,c} N\to N$ be the projection.   Then $\mathcal B(N,M\rtimes_{\alpha,c} N,N)$ is homotopy equivalent to the topological sum $\coprod_{N} \mathcal BM$ of $|N|$ copies of $\mathcal BM$.  Consequently, if $K$ is a commutative ring, then $\ov \pi\colon K[M\rtimes_{\alpha,c} N]\to KN$ is a homological epimorphism if and only if the classifying space $\mathcal BM$ is $K$-acyclic.  In fact, $\Tor^{K[M\rtimes_{\alpha,c} N]}_i(KN,KN)\cong \bigoplus_N H_i(\mathcal BM,K)\cong  \bigoplus_{N} H_i(M,K)$ for $i\geq 0$.  This applies, in particular, to a semidirect product projection.
\end{Thm}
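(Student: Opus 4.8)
The plan is to reduce everything to the homotopy type of $\mathcal B(N,L,N)$, where $L=M\rtimes_{\alpha,c}N$, via Theorem~\ref{t:compute.tor}(1), and then feed the answer into Theorem~\ref{t:homological.epi.monoids}. Since the projection $\pi\colon L\to N$ is surjective, $\ov\pi$ is an epimorphism; so by Theorem~\ref{t:homological.epi.monoids} the two ring-theoretic assertions follow once one knows $H_i(\mathcal B(N,L,N),K)\cong\bigoplus_N H_i(M,K)$, which in turn follows from the homotopy statement together with $H_i(M,K)\cong H_i(\mathcal BM,K)$ (noted after Proposition~\ref{p:pass.to.relative}), the fact that singular homology turns a topological sum into a direct sum, and path-connectedness of $\mathcal BM$ to handle degree $0$. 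The ``in particular'' for semidirect products is the case of trivial cocycle $c$. Thus the whole content is the homotopy equivalence $\mathcal B(N,L,N)\simeq\coprod_N\mathcal BM$.

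First I would decompose $N\rtimes L\ltimes N$. An arrow $(n_1,\ell,n_2)$ runs from $(n_1\pi(\ell),n_2)$ to $(n_1,\pi(\ell)n_2)$, and both endpoints have the same product $n_1\pi(\ell)n_2$ of their coordinates; hence $(n_1,n_2)\mapsto n_1n_2$ is constant on connected components and $N\rtimes L\ltimes N=\coprod_{w\in N}\mathscr C_w$, where $\mathscr C_w$ is the full subcategory on the objects $(n_1,n_2)$ with $n_1n_2=w$. Each $\mathscr C_w$ is nonempty (it contains $(1,w)$), so this is a topological sum of exactly $|N|$ classifying spaces, $\mathcal B(N,L,N)=\coprod_{w\in N}\mathcal B\mathscr C_w$, and it suffices to show $\mathcal B\mathscr C_w\simeq\mathcal BM$ for each fixed $w$.

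For this I would single out the object $(1,w)$ of $\mathscr C_w$. A direct computation with the crossed product, using the normalization axioms (C3) $\alpha_1=1_M$ and (C4) $c(1,-)=c(-,1)=1$, shows that $\End_{\mathscr C_w}((1,w))$ consists precisely of the arrows $(1,(m,1),w)$ with $m\in M$ and that $m\mapsto(1,(m,1),w)$ is a monoid isomorphism $M\to\End_{\mathscr C_w}((1,w))$ — the endomorphisms $\alpha_n$ and the cocycle $c$ drop out because the intervening $N$-components are trivial. Let $k_w$ be the inclusion of this one-object full subcategory (identified with $M$) into $\mathscr C_w$; I claim $\mathcal Bk_w$ is a homotopy equivalence. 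I would prove this by Quillen's Theorem~A in its dual form (obtained by applying the stated version to the opposite functor and using $\mathcal B\mathscr C\cong\mathcal B\mathscr C^{op}$), so one must check that the comma category $(a,b)/k_w$ has contractible classifying space for every object $(a,b)$ of $\mathscr C_w$. A further crossed-product computation (again using (C4)) shows that the arrows $(a,b)\to(1,w)$ in $\mathscr C_w$ are exactly the $(1,(m,a),b)$ with $m\in M$, and that precomposing $(1,(m,a),b)$ with the endomorphism $(1,(u,1),w)$ of $(1,w)$ gives $(1,(um,a),b)$; hence $(a,b)/k_w$ is the category with object set $M$ and with a morphism $m\to m'$ for each $u\in M$ with $um=m'$. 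This category has $1\in M$ as an initial object, so its classifying space is contractible, and Quillen's Theorem~A applies, giving $\mathcal BM\simeq\mathcal B\mathscr C_w$. Assembling over $w$ yields $\mathcal B(N,L,N)\simeq\coprod_N\mathcal BM$, and then reading off $\Tor^{KL}_i(KN,KN)\cong H_i(\coprod_N\mathcal BM,K)\cong\bigoplus_N H_i(M,K)$ is immediate.

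The step I expect to be the crux — and the one I would write out most carefully — is the choice of the ``anchor'' object $(1,w)$ (not $(w,1)$): the comma categories come out contractible exactly because $\pi$ of the relevant middle terms is trivial, so that (C3)–(C4) eliminate the twisting $\alpha_n$ and the cocycle $c$. With the anchor $(w,1)$ the analogous comma category is a copy of $M$ acted on by $M$ through $\alpha_b$, which need not be contractible, so the normalization of the crossed system is genuinely being used; everything else (the disjoint-union decomposition, the invocations of Theorems~\ref{t:compute.tor} and~\ref{t:homological.epi.monoids}, and the final bookkeeping) is routine.
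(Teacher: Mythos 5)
Your argument is correct, and it reaches the key homotopy equivalence $\mathcal B\mathscr C_w\simeq\mathcal BM$ by a route that differs from the paper's in an interesting way. Both proofs use the same decomposition $\ker(\pi)=\coprod_{w\in N}\mathscr C_w$ and Quillen's Theorem~A, but the paper goes in the opposite direction: it defines a functor $F\colon\mathscr C_{w}\to M$ by $F(a,(m,n),b)=\alpha_a(m)c(a,n)$, whose functoriality is exactly where the cocycle axioms (C1)--(C2) enter, and then checks that the single comma category $F/\ast$ has a terminal object $((1,w),1)$. You instead take the inclusion $k_w\colon M\to\mathscr C_w$ of the full subcategory at the anchor $(1,w)$ (whose endomorphism monoid you correctly identify with $M$ via $(1,(m,1),w)$, using only (C3)--(C4)) and apply the dual form of Theorem~A, checking that each comma category $(a,b)/k_w$ has the initial object $(1,(1,a),b)$; your computations of the hom-sets $(a,b)\to(1,w)$ and of the composite $(1,(u,1),w)\circ(1,(m,a),b)=(1,(um,a),b)$ are right. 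What your version buys is that no functoriality verification via (C1)--(C2) is needed (those axioms are only used to make $M\rtimes_{\alpha,c}N$ associative in the first place) and the equivalence is exhibited as being induced by a subcategory inclusion; what the paper's version buys is an explicit map down to $\mathcal BM$ and only one comma category to check per component. The surrounding bookkeeping (surjectivity of $\pi$ giving the epimorphism, Theorems~\ref{t:compute.tor} and~\ref{t:homological.epi.monoids}, additivity of homology over the topological sum, path-connectedness of $\mathcal BM$) is handled correctly. One small quibble that does not affect the proof: in your closing remark, with the anchor $(w,1)$ the comma category need not be ``a copy of $M$ twisted by $\alpha_b$''---it can simply be empty when $nb=1$ has no solution---which only strengthens your point that the choice of anchor $(1,w)$ is what makes the normalization axioms suffice.
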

\begin{proof}
Let $n\in N$ and let $\mathscr C_n$ be the full subcategory of $\ker(\pi)=N\rtimes (M\rtimes_{\alpha,c} N)\ltimes N$ on the objects $(a,b)\in N\times N$ with $ab=n$.  An arrow of $\ker(\pi)$ is of the form $(a,(m,n),b)\colon (an,b)\to (a,nb)$, and this arrow belongs to $\mathscr C_{anb}$.  Thus $\ker(\pi)=\coprod_{n\in N}\mathscr C_n$, and so it suffices to show that $\mathcal B\mathscr C_n$ is homotopy equivalent to $\mathcal BM$ for all $n\in N$.  We prove this using Quillen's Theorem~A applied to a certain functor $\ker(\pi)\to M$, which generalizes one considered by Rhodes and Tilson~\cite{Kernel} for semidirect products.

Fix $n_0\in N$.  Define $F\colon \mathscr C_{n_0}\to M$ by sending each object of $\mathscr C_{n_0}$ to the unique object of $M$ and putting $F(a,(m,n),b)= \alpha_a(m)c(a,n)$.  We check that $F$ is a functor. The identity at the object $(a,b)$ is $(a,(1,1),b)$, and $F(a,(1,1),b)=\alpha_a(1)c(a,1)=1$.   A pair of composable arrows of $\mathscr C_{n_0}$ is of the form $(a,(m,n),n'b),(an ,(m',n'),b)$ with $ann'b=n_0$, and the product of these arrows is $(a,(m\alpha_n(m')c(n,n'),nn'),b)$.  Therefore,
\begin{equation}\label{eq:crossed.ker}
\begin{split}
F((a,(m,n),n'b)(an ,(m',n'),b)) &= \alpha_a(m\alpha_n(m')c(n,n'))c(a,nn')\\ & = \alpha_a(m)\alpha_a\alpha_n(m')\alpha_a(c(n,n'))c(a,nn').
\end{split}
\end{equation}
But we also compute
\begin{equation}\label{eq:crossed.ker.2}
\begin{split}
 F(a,(m,n),n'b)F(an,(m',n'),b) &= \alpha_a(m)c(a,n)\alpha_{an}(m')c(an,n')\\ & = \alpha_a(m)\alpha_a\alpha_n(m')c(a,n)c(an,n')
 \end{split}
\end{equation}
using (C1).  An application of (C2) shows that right hand sides of \eqref{eq:crossed.ker} and \eqref{eq:crossed.ker.2} are equal.  Thus $F$ is a functor.

Denoting the unique object of $M$ by $\ast$, we claim that  $F/\ast$  has terminal object $((1,n_0),1)$.  An object of $F/\ast$ is of the form $((a,b),m)$ with $ab=n_0$ and $m\in M$.    But using that $\alpha_1(m')=m'$ and $c(1,a)=1$, it is straightforward to verify that  $(1,(m,a),b)$ provides the unique arrow of $F/\ast$ from $((a,b),m)$ to $((1,n_0),1)$.   Thus $\mathcal B(F/\ast)$ is contractible, and hence $\mathcal BF\colon \mathcal B\mathscr C_{n_0}\to \mathcal BM$ is a homotopy equivalence by Quillen's Theorem~A, as required.
\end{proof}

\section{Groups associated to monoids}\label{s:sec4}
The forgetful functor from the category of groups to the category of monoids admits both left and right adjoints.  The right adjoint takes a monoid $M$ to its group of units and the left adjoint takes $M$ to its \emph{group completion} $G(M)$.  As a group, $G(M)$ has generators $M$ and relations coming from the multiplication table of $M$. It is well known that $G(M)\cong \pi_1(\mathcal BM)$~\cite{GabrielZisman}.  If $M$ is a finite monoid, then $G(M)$ is a homomorphic image of $M$.  Indeed, the image of $M$ in $G(M)$ is a finite submonoid, and hence a subgroup, generating $G(M)$.  The following concrete description is a combination of~\cite[Chapter~8, Example~1.7]{Arbib} and Graham's theorem~\cite{Graham} (see also~\cite[Chapter~4.13]{qtheor}).    Choose an idempotent $e$ in the minimal ideal of $M$.  It is well known that $eMe=G_e$ is a group (cf.~\cite[Appendix~A]{qtheor} or~\cite{Arbib}).   Then $G(M)\cong G_e/N$ where $N$ is the normal closure of the subgroup of $G_e$ consisting of those elements than can be written as a product of idempotents from $MeM$.  The universal map takes $m$ to $emeN$.

 The canonical homomorphism $\psi\colon M\to G(M)$ allows us to inflate $KG(M)$-modules to $KM$.  A $KM$-module is inflated from $KG(M)$ if and only if the corresponding representation of $M$ is by invertible $K$-linear operators.

There are also groups associated to each idempotent of $M$.  The set of idempotents of a monoid $M$ will be denoted $E(M)$.
If $e\in E(M)$, then the right ideal $eM$ is a right $M$-set with endomorphism monoid $eMe$ (acting by left multiplication) (cf.~\cite[Proposition~1.8]{repbook}).  The automorphism group of $eM$ is then the group of units $G_e$ of $eMe$, which is called the \emph{maximal subgroup} of $M$ at $e$.  Note that $G_1$ is the group of units of $M$.  We denote by $R_e$ the set of elements $m\in M$ with $mM=eM$.  Then $R(e)= eM\setminus R_e$ is a right ideal of $M$ contained in $eM$ (possibly empty), and so $(eM,R(e))$ is an $M$-set pair.  Moreover, $R_e$ is $G_e$-invariant and a free left $G_e$-set (see~\cite[Proposition~1.10]{repbook}), and so $R(e)$ is $G_e$-invariant.  Thus $G_e\cong \mathrm{Aut}_M(eM,R(e))$ via its action by left multiplication.  Since $R_e=eM\setminus R(e)$, following our previous convention we write $KR_e$ for the $KG_e$-$KM$-bimodule $KeM/KR(e)$.   The right $KM$-module $KR_e$ is often called the right \emph{Sch\"utzenberger representation} of $M$ on $R_e$.  Dually, if we put $L_e=\{m\in M\mid Mm=Me\}$ and $L(e)=Me\setminus L_e$, then $L(e)$ is a left ideal, $G_e$ acts freely on the right of $L_e$ and $KL_e=KMe/KL(e)$ is a $KM$-$KG_e$-bimodule.

\begin{Prop}\label{p:go.to.sing}
Let $M$ be a monoid,   $e\in E(M)$ and $K$ a commutative ring.  Viewing $KR_e= KeM/KR(e)$ as a $KG_e$-$KM$-bimodule, we have that $H_n(M,KR_e)\cong \til H_{n-1}(\mathcal B(R(e)\rtimes M), K)$ as left $KG_e$-modules.
\end{Prop}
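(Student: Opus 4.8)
The plan is to deduce this from the relative-homology version of Proposition~\ref{p:pass.to.relative}(3) applied to the $M$-set pair $(eM, R(e))$, combined with the contractibility of $eM$ from Corollary~\ref{c:contract.proj}. First I would observe that, as a $KM$-module, $KR_e = KeM/KR(e) = K[eM\setminus R(e)]$ in the notation of Section~\ref{s:sec3}, since $R_e = eM\setminus R(e)$. By Proposition~\ref{p:pass.to.relative}(3), there is an isomorphism of $K\End_M(eM,R(e))$-modules
\[
H_n(M, K[eM\setminus R(e)])\cong H_n\bigl(\mathcal B(eM\rtimes M),\mathcal B(R(e)\rtimes M),K\bigr)
\]
for all $n\geq 0$. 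Since $G_e$ acts on the pair $(eM,R(e))$ by $M$-set automorphisms (via left multiplication, as recalled just before the statement), $G_e$ maps into $\End_M(eM,R(e))$, so this is in particular an isomorphism of left $KG_e$-modules, and it matches the $KG_e$-module structure on $KR_e$ described in the statement.

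Next I would bring in contractibility. By Corollary~\ref{c:contract.proj}, $eM$ is a contractible right $M$-set, i.e.\ $\mathcal B(eM\rtimes M)$ is contractible. Feeding this into the long exact sequence of the CW pair $\bigl(\mathcal B(eM\rtimes M),\mathcal B(R(e)\rtimes M)\bigr)$ in (reduced) homology, the terms $\til H_n(\mathcal B(eM\rtimes M),K)$ all vanish, so the connecting maps give isomorphisms
\[
H_n\bigl(\mathcal B(eM\rtimes M),\mathcal B(R(e)\rtimes M),K\bigr)\cong \til H_{n-1}(\mathcal B(R(e)\rtimes M),K)
\]
for all $n\geq 0$; here the case $n=0$ and the case $R(e)=\emptyset$ are handled uniformly thanks to the convention $\til H_{-1}(\emptyset,K)=K$ adopted at the start of Section~\ref{s:sec3}, which is exactly why that convention was introduced. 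Composing the two displayed isomorphisms yields $H_n(M,KR_e)\cong \til H_{n-1}(\mathcal B(R(e)\rtimes M),K)$.

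The one point requiring care is the naturality/equivariance bookkeeping: I must check that the connecting homomorphism in the long exact sequence of the pair is $G_e$-equivariant, so that the final isomorphism is one of $KG_e$-modules and not merely of $K$-modules. This is automatic because the long exact sequence of a $G_e$-CW pair is a sequence of $KG_e$-modules and $KG_e$-module maps — the pair $\bigl(\mathcal B(eM\rtimes M),\mathcal B(R(e)\rtimes M)\bigr)$ is a $G_e$-CW pair since $G_e$ acts cellularly on both spaces — so no separate argument is needed beyond invoking functoriality of the long exact sequence with respect to the $G_e$-action. I expect the only genuinely delicate step to be confirming that the reduced long exact sequence behaves correctly in the edge case $R(e)=\emptyset$ (so $eM = R_e$ and $KR_e\cong KeM$ is free, hence $H_n(M,KR_e)=0$ for $n\geq 1$ and $\cong KeM\otimes_{KM}K$ for $n=0$), which is precisely matched by $\til H_{-1}(\emptyset,K)=K$ and $\til H_{n-1}(\emptyset,K)=0$ for $n\geq 1$; everything else is a routine chase.
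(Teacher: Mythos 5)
Your proposal is correct and follows essentially the same route as the paper: identify $KR_e$ with the module $K[eM\setminus R(e)]$ of the $M$-set pair $(eM,R(e))$, apply Proposition~\ref{p:pass.to.relative}(3) to pass to relative homology of the CW pair, and then use the contractibility of $\mathcal B(eM\rtimes M)$ from Corollary~\ref{c:contract.proj} together with the (natural, hence $G_e$-equivariant) connecting homomorphism of the reduced long exact sequence. Your extra care about the edge case $R(e)=\emptyset$ and the convention $\til H_{-1}(\emptyset,K)=K$ is a welcome elaboration of a point the paper leaves implicit.
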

\begin{proof}
Note that $KR_e$ is the right $KM$-module associated to the $M$-set pair $(eM,R(e))$.  Moreover,  $G_e$ is the automorphism group of this $M$-set pair via its action by left multiplication.  It follows that $H_n(M,KR_e)\cong H_n(\mathcal B(eM\rtimes M), \mathcal B(R(e)\rtimes M),K)$ as $KG_e$-modules by Proposition~\ref{p:pass.to.relative}.  Since $\mathcal B(eM\rtimes M)$ is contractible by Corollary~\ref{c:contract.proj}, we obtain from the long exact sequence for reduced homology (and naturality of the boundary map in that long exact sequence) that $H_n(\mathcal B(eM\rtimes M), \mathcal B(R(e)\rtimes M),K)\cong \til H_{n-1}(\mathcal B(R(e)\rtimes M),K)$  as left $KG_e$-modules.
 This completes the proof.
\end{proof}

If $e\in E(M)$ is an idempotent and $V$ is a $KG_e$-module, then the \emph{coinduced} $KM$-module (see~\cite[Chapter~5]{repbook}) is $\Coind_e(V)=\Hom_{KG_e}(KR_e,V)$.  The functor $\Coind_e$ is an exact functor from $KG_e$-modules to $KM$-modules because $KR_e$ is a $KG_e$-$KM$-bimodule which is a free as a left $KG_e$-module. There is also an \emph{induced} $KM$-module $\Ind_e(V)=KL_e\otimes_{KG_e} V$ associated to $V$.  The functor $\Ind_e$ is an exact functor as $KL_e$ is a free right $KG_e$-module.    Observe that if $V$ is a right $KG_e$-module, then $\Hom_{KG_e}(KL_e,V)$  is a right $KM$-module that can be viewed as $\Coind_e(V)$ for the opposite monoid $M^{op}$.  From this viewpoint, there is the following duality relating induction and coinduction.

\begin{Prop}\label{p:dual.induced}
Let $M$ be a monoid, $e\in E(M)$ and $K$ a commutative ring.  Then $\Hom_K(\Ind_e(V),K)\cong \Hom_{KG_e}(KL_e,\Hom_K(V,K))$ as right $KM$-modules.
\end{Prop}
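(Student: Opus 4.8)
The plan is to exhibit a natural isomorphism directly by the standard tensor–hom adjunction, keeping careful track of which side each module structure lives on. Recall that $\Ind_e(V) = KL_e \otimes_{KG_e} V$ for a left $KG_e$-module $V$, and that $KL_e$ is a $KM$–$KG_e$-bimodule which is free as a right $KG_e$-module; its left $KM$-module structure induces the left $KM$-module structure on $\Ind_e(V)$. Applying the $K$-dual functor $\Hom_K(-,K)$, which is contravariant, turns $\Ind_e(V)$ into a right $KM$-module.

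First I would apply the general adjunction isomorphism $\Hom_K(B \otimes_{S} V, K) \cong \Hom_{S}(B, \Hom_K(V,K))$ with $S = KG_e$, $B = KL_e$, and $V$ the given module; this is natural and valid since $B$ is an $S$-module on the relevant side. This yields $\Hom_K(\Ind_e(V), K) \cong \Hom_{KG_e}(KL_e, \Hom_K(V,K))$ as $K$-modules. Next I would check that this isomorphism is compatible with the right $KM$-actions on both sides: on the left-hand side, $KM$ acts on $\Hom_K(\Ind_e(V),K)$ via the left $KM$-action on $KL_e$ (dualized); on the right-hand side, $KM$ acts on $\Hom_{KG_e}(KL_e, \Hom_K(V,K))$ via the left $KM$-action on $KL_e$ in the contravariant (first) argument, which is precisely the action making it a right $KM$-module (the same mechanism that makes $\Coind_e$ of a right $KG_e$-module into a right $KM$-module, as noted just before the statement). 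Tracing an element $f \in \Hom_K(\Ind_e(V),K)$ to the map $a \mapsto (v \mapsto f(a\otimes v))$ and verifying that precomposition with left multiplication by $m \in M$ on $KL_e$ corresponds on both sides to the same operation is a routine unwinding of definitions.

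The only mildly delicate point — and the one I would be most careful about — is the bookkeeping of left versus right actions: $V$ is a left $KG_e$-module, $\Hom_K(V,K)$ is then a right $KG_e$-module, and $KL_e$ carries a left $KG_e$-action (from the free right $G_e$-set structure on $L_e$) on the side over which the $\Hom_{KG_e}$ is formed, so one must confirm that $\Hom_{KG_e}(KL_e, \Hom_K(V,K))$ is well-formed, i.e.\ that the $KG_e$-actions used to form the balanced $\Hom$ genuinely match. This is exactly the setup of the preceding paragraph in the excerpt, where $\Hom_{KG_e}(KL_e, V)$ for a right $KG_e$-module $V$ is identified as $\Coind_e(V)$ for $M^{op}$; so the proposition amounts to saying $D(\Ind_e V) \cong \Coind^{M^{op}}_e(D V)$, which is the expected duality between induction and coinduction. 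Once the actions are matched, naturality in $V$ is automatic from naturality of the tensor–hom adjunction and of $\Hom_K(-,K)$.
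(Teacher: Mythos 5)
Your proposal is correct and follows essentially the same route as the paper's proof, which likewise applies the hom-tensor adjunction $\Hom_K(KL_e\otimes_{KG_e}V,K)\cong\Hom_{KG_e}(KL_e,\Hom_K(V,K))$ and invokes its naturality to upgrade the $K$-module isomorphism to one of right $KM$-modules. Your additional bookkeeping of the left/right actions is a more explicit unwinding of exactly that naturality argument.
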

\begin{proof}
By the hom-tensor adjunction, \[\Hom_K(KL_e\otimes_{KG_e} V,K)\cong \Hom_{KG_e}(KL_e,\Hom_K(V,K))\] as $K$-modules.  Naturality of the hom-tensor adjunction implies it is an isomorphism of right $KM$-modules.
\end{proof}

We now compute the cohomology of a coinduced module.

\begin{Thm}\label{t:cohomology.coinduced}
Let $M$ be a monoid,  $e\in E(M)$  and $K$ a commutative ring. Let $V$ be a
$KG_e$-module.
\begin{enumerate}
  \item There is a spectral sequence $\Ext^p_{KG_e}(\til H_{q-1}(\mathcal B(R(e)\times M),K),V)\Rightarrow_p H^n(M,\Coind_e(V))$.
  \item If $M$ is right p.p.\ (e.g., regular), there is a spectral sequence \[\Ext^p_{KG_e}(\til H_{q-1}(\Delta(\Omega(R(e))),K),V)\Rightarrow_p H^n(M,\Coind_e(V)).\]
\end{enumerate}
Moreover, if $V$ is injective, then
\begin{enumerate}[resume]
  \item $H^n(M,\Coind_e(V))\cong \Hom_{KG_e}(\til H_{n-1}(\mathcal B(R(e)\times M),K),V)$ for all $n\geq 0$.
  \item If $M$ is right p.p.\ (e.g., regular), then \[H^n(M,\Coind_e(V))\cong \Hom_{KG_e}(\til H_{n-1}(\Delta(\Omega(R(e))),K),V)\] for all $n\geq 0$.
\end{enumerate}
\end{Thm}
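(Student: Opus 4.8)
The plan is to derive all four assertions from the Ext--Tor spectral sequence of Theorem~\ref{t:ext.dual.tor}, applied to the Sch\"utzenberger bimodule $B=KR_e$. Concretely, I would take $R=KM$ and $S=KG_e$, let $B=KR_e=KeM/KR(e)$ with its $KG_e$-$KM$-bimodule structure, and set $A=K$ (the trivial $KM$-module) and $C=V$. Then $\Hom_S(B,C)=\Hom_{KG_e}(KR_e,V)=\Coind_e(V)$ by definition, so the abutment $\Ext^n_R(A,\Hom_S(B,C))$ of that spectral sequence is exactly $\Ext^n_{KM}(K,\Coind_e(V))=H^n(M,\Coind_e(V))$, while the $E_2$-page is $\Ext^p_{KG_e}(\Tor^{KM}_q(KR_e,K),V)$, the $KG_e$-action on $\Tor^{KM}_q(KR_e,K)$ being the canonical one induced by the left $KG_e$-action on $KR_e$.

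Before invoking Theorem~\ref{t:ext.dual.tor} I must verify its hypothesis, namely that $\Ext^i_{KG_e}(KR_e\otimes_{KM}P,V)=0$ for all $i\geq 1$ and every projective $KM$-module $P$. The key point is that $KR_e$ is free as a left $KG_e$-module, since $R_e$ is a free left $G_e$-set (as recalled before the statement); hence $KR_e\otimes_{KM}(KM)^{(I)}\cong KR_e^{(I)}$ is $KG_e$-free for any index set $I$, so $KR_e\otimes_{KM}P$ is a direct summand of a free $KG_e$-module, thus projective, and all higher $\Ext_{KG_e}$ out of it vanishes. (That $KR_e$ has matching $K$-actions on its two sides, so that it is an $S\otimes_K R^{op}$-module, is clear.) With the hypothesis established, Theorem~\ref{t:ext.dual.tor} at once yields the spectral sequence needed for part~(1), once the $E_2$-term is identified, and its final displayed isomorphism yields part~(3) when $V$ is injective.

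The identification of the $E_2$-term is where the earlier topology enters: by definition $\Tor^{KM}_q(KR_e,K)=H_q(M,KR_e)$, and Proposition~\ref{p:go.to.sing} states precisely that $H_q(M,KR_e)\cong \til H_{q-1}(\mathcal B(R(e)\rtimes M),K)$ as left $KG_e$-modules, this isomorphism being with respect to the same $KG_e$-action (left multiplication on $KR_e$) that governs the spectral sequence. Substituting gives parts~(1) and (3) as stated. For parts~(2) and (4) I would further use that $R(e)=eM\setminus R_e$ is a $G_e$-invariant right ideal of $M$ on which $G_e$ acts by $M$-automorphisms via left multiplication; then Corollary~\ref{c:equivariant.business} (and the homotopy equivalence $\mathcal B\Phi_{R(e)}$ behind it), applied to the right ideal $R(e)$ and restricted along the embedding $G_e\to\mathrm{Aut}_M(R(e))$, provides $KG_e$-isomorphisms $\til H_{q-1}(\mathcal B(R(e)\rtimes M),K)\cong \til H_{q-1}(\Delta(\Omega(R(e))),K)$. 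Feeding these into parts~(1) and (3) gives (2) and (4).

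I expect the only real friction to be bookkeeping: checking the vanishing hypothesis of Theorem~\ref{t:ext.dual.tor}, and ensuring that every identification is $KG_e$-equivariant rather than merely $K$-linear. Both are handled by the freeness of $KR_e$ over $KG_e$ and by the fact that Proposition~\ref{p:go.to.sing} and Corollary~\ref{c:equivariant.business} are already stated equivariantly, so beyond assembling these ingredients there is no substantial obstacle.
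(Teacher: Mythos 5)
Your proposal is correct and follows essentially the same route as the paper: apply Theorem~\ref{t:ext.dual.tor} with $R=KM$, $S=KG_e$, $B=KR_e$, $A=K$, $C=V$, verify the vanishing hypothesis via freeness of $KR_e$ over $KG_e$, identify the $E_2$-term through Proposition~\ref{p:go.to.sing}, and pass to the order complex via Corollary~\ref{c:equivariant.business} in the right p.p.\ case. The only (immaterial) difference is that the paper checks the hypothesis by observing that $\Hom_{KG_e}(KR_e\otimes_{KM}P,-)\cong\Hom_{KM}(P,\Hom_{KG_e}(KR_e,-))$ is exact, whereas you argue directly that $KR_e\otimes_{KM}P$ is a summand of a free $KG_e$-module.
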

\begin{proof}
Recall that $H^n(M,\Coind_e(V)) = \Ext^n_{KM}(K,\Hom_{KG_e}(KR_e,V))$.   Then since $KR_e$ is a free left $KG_e$-module, it follows  $KR_e\otimes_{KM} P$ is $KG_e$-projective for any projective $KM$-module $P$, as $\Hom_{KG_e}(KR_e\otimes_{KM} P,-)\cong \Hom_{KM}(P,\Hom_{KG_e}(KR_e,-))$.  Therefore,
Theorem~\ref{t:ext.dual.tor} applies to provide a spectral sequence \[\Ext^p_{KG_e}(\Tor_q^{KM}(KR_e,K),V)\Rightarrow_p \Ext^n_{KM}(K,\Hom_{KG_e}(KR_e,V)).\]  As $\Tor_q^{KM}(KR_e,K)=H_q(M,KR_e)$,   Proposition~\ref{p:go.to.sing} yields the spectral sequence in (1).  The second item follows from the first and Corollary~\ref{c:equivariant.business}, as $G_e$ is a subgroup of the automorphism group (as a right $M$-set) of the right ideal $R(e)$.  The final two items follow because the spectral sequences (1),(2) collapse on the $E_2$-page on the line $p=0$ when $V$ is injective (or using the conclusion of Theorem~\ref{t:ext.dual.tor} for the case that $C$ is injective, as $\Tor_n^{KM}(KR_e,K)=H_n(M,KR_e)\cong\til H_{n-1}(\mathcal B(R(e)\times M),K)$).
\end{proof}

Our goal now is to compute $\Ext$ from a $KG(M)$-module, inflated to $M$,  to a coinduced module.

If $V,W$ are $KM$-modules, then $V\otimes_K W$ is a $KM$-module via $m(v\otimes w)=mv\otimes mw$.  If $V$ is a $KG(M)$-module, then there is the contragredient module $V^*=\Hom_K(V,K)$ where $(gf)(v) = f(g\inv v)$ for $v\in V$ and $g\in G(M)$.
If $V$ is a $KG(M)$-module and $W$ is a $KM$-module, then $\Hom_K(V,W)$ is a $KM$-module via the action $(mf)(v)= mf(\psi(m)\inv v)$.

\begin{Thm}\label{t:to.triv}
Let $M$ be a monoid and $K$ a commutative ring.
Let $U,W$ be $KM$-modules and $V$ a $KG(M)$-module.  Then there is an isomorphism $\Hom_{KM}(U\otimes_K V,W)\cong \Hom_{KM}(U,\Hom_K(V,W))$ natural in $U,V,W$.  Moreover, if $V$ is a projective $K$-module, then \[\Ext^n_{KM}(U\otimes_K V,W)\cong \Ext^n_{KM}(U,\Hom_K(V,W))\] for all $n\geq 0$.  In particular, if $V$ is a projective $K$-module and $W$ (respectively, $U$) is an injective (respectively, projective) $KM$-module, then $\Hom_K(V,W)$ (respectively, $U\otimes_K V$) is an injective (respectively, projective) $KM$-module.
\end{Thm}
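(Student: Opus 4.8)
The plan is to exhibit the endofunctors $F:=-\otimes_K V$ and $G:=\Hom_K(V,-)$ of $\module{KM}$ as an adjoint pair $F\dashv G$ satisfying the compatibility $\Hom_{KM}(F(-),-)\cong\Hom_{KM}(-,G(-))$ demanded by the Adams--Rieffel theorem, and then, once $V$ is assumed $K$-projective, to invoke Theorem~\ref{t:adams.rieffel}.

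First I would establish the hom--tensor-type isomorphism. Start from the usual $K$-module adjunction $f\mapsto\tilde f$ with $\tilde f(u)(v)=f(u\otimes v)$, which is natural in $U,V,W$, and check that it restricts to a bijection on $KM$-linear maps. Unwinding the two relevant $KM$-module structures --- the diagonal action on $U\otimes_K V$ and the twisted action $(mg)(v)=mg(\psi(m)\inv v)$ on $\Hom_K(V,W)$ --- one finds that $KM$-linearity of $f$ reads $f(mu\otimes\psi(m)v)=mf(u\otimes v)$, whereas $KM$-linearity of $\tilde f$ reads $f(mu\otimes v)=mf(u\otimes\psi(m)\inv v)$; these two identities are carried to one another by the substitution $v\mapsto\psi(m)\inv v$ and its inverse, which is legitimate precisely because $\psi(m)$ is invertible in $G(M)$. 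This is the only place the hypothesis that $V$ is a $KG(M)$-module is used, and it is essential. Naturality of the restricted map is inherited from the $K$-module adjunction. This proves the first assertion and identifies $F$ as left adjoint to $G$ with the required form of the adjunction.

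Now suppose $V$ is a projective $K$-module. Then $F=-\otimes_K V$ is exact since $V$ is $K$-flat, $G=\Hom_K(V,-)$ is exact since $V$ is $K$-projective, and $\module{KM}$ has enough injectives, being a module category over a ring; so Theorem~\ref{t:adams.rieffel} applied with $\mathscr A=\mathscr B=\module{KM}$ yields $\Ext^n_{KM}(U\otimes_K V,W)\cong\Ext^n_{KM}(U,\Hom_K(V,W))$ for all $n\ge0$. For the final clause: if $W$ is an injective $KM$-module, then $\Hom_{KM}(-,\Hom_K(V,W))\cong\Hom_{KM}((-)\otimes_K V,W)$ is a composite of the exact functor $-\otimes_K V$ with the exact functor $\Hom_{KM}(-,W)$, hence exact, so $\Hom_K(V,W)$ is injective; dually, if $U$ is projective, then $\Hom_{KM}(U\otimes_K V,-)\cong\Hom_{KM}(U,\Hom_K(V,-))$ is a composite of the exact functors $\Hom_K(V,-)$ and $\Hom_{KM}(U,-)$, hence exact, so $U\otimes_K V$ is projective. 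These two facts are exactly the ``$G$ preserves injectives'' observation at the start of the proof of Theorem~\ref{t:adams.rieffel} and its dual.

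The only step requiring genuine care is the bookkeeping in the second paragraph: keeping straight the three module structures in play (the diagonal action, the twisted action on $\Hom_K(V,W)$, and the pullback action on $V$ along $\psi$) and recognizing that invertibility of $\psi(m)$ is precisely what allows the $K$-linear adjunction to descend to $\module{KM}$. Everything after that is a direct appeal to Theorem~\ref{t:adams.rieffel} and to the standard exactness criteria for $\otimes_K$ and $\Hom_K$ over the commutative ring $K$.
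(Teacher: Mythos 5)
Your proposal is correct and follows essentially the same route as the paper: verify that the $K$-linear hom--tensor adjunction restricts to $KM$-linear maps (using invertibility of $\psi(m)$ to pass between the two linearity conditions), then apply the Adams--Rieffel theorem with $F=-\otimes_K V$ and $G=\Hom_K(V,-)$ exact when $V$ is $K$-projective. The only cosmetic difference is in the last clause, where you argue via exactness of the composite functors while the paper reads off the vanishing of $\Ext^n$ for $n\geq 1$ directly from the established isomorphism; these are the same argument in different clothing.
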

\begin{proof}
It suffices by the standard hom-tensor adjunction for $K$ to show that $\p\colon U\otimes_K V\to W$ is a $KM$-module homomorphism if and only if $\Phi\colon U\to \Hom_K(V,W)$ given by $\Phi(u)(v) = \p(u\otimes v)$ is a $KM$-module homomorphism.  If $\p$ is a $KM$-module homomorphism, then
\begin{align*}
(m\Phi(u))(v) &= m\Phi(u)(\psi\inv(m)v)=m \p(u\otimes \psi\inv(m)v)\\ &=\p(mu\otimes \psi(m)\psi(m)\inv v)=\p(mu\otimes v) = \Phi(mu)(v).
\end{align*}
  Conversely, if $\Phi$ is a $KM$-module homomorphism, then
\begin{align*}
\p(m(u\otimes v)) &= \p(mu\otimes \psi(m)v) = \Phi(mu)(\psi(m)v)=(m\Phi(u))(\psi(m)v)\\ &=m\Phi(u)(\psi\inv(m) \psi(m)v) = m\p(u\otimes v)
\end{align*}
 as required.

If $V$ is a projective $K$-module, then $-\otimes_K V$ and $\Hom_K(V,-)$ are exact functors, and so by the Adams-Rieffel theorem we have an isomorphism  $\mathrm{Ext}^n_{KM}(U\otimes_K V,W)\cong \Ext^n_{KM}(U,\Hom_K(V,W))$ for all $n\geq 0$.  In particular, if $U$ is projective, then $\Ext^n_{KM}(U\otimes_K V, W)=0$ for all $n\geq 1$ and $KM$-modules $W$, and so $U\otimes_K V$ is projective.  Similarly, if $W$ is injective, then $\Ext^n_{KM}(U,\Hom_K(V,W))=0$ for all $n\geq 1$ and $KM$-modules $U$, and so $\Hom_K(V,W)$ is injective.
\end{proof}

We shall primarily be interested in the case that $U=K$, which generalizes a well-known result for groups to monoids.

\begin{Cor}\label{c:to.homology}
Let $M$ be a monoid, $V$ a $KG(M)$-module and $W$ a $KM$-module.   Then if $V$ is a projective $K$-module, \[\Ext^n_{KM}(V,W)\cong H^n(M,\Hom_K(V,W)).\]
\end{Cor}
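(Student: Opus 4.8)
The plan is to deduce this directly from Theorem~\ref{t:to.triv} by specializing the $KM$-module $U$ to the trivial module $K$.

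First I would identify $K\otimes_K V$ with $V$ as $KM$-modules. Under the canonical $K$-linear isomorphism $K\otimes_K V\xrightarrow{\sim} V$, $1\otimes v\mapsto v$, the tensor-product action $m(1\otimes v)=(m\cdot 1)\otimes mv=1\otimes \psi(m)v$ (recall $V$ is a $KG(M)$-module, so the second tensor factor is acted on through $\psi\colon M\to G(M)$) corresponds exactly to the inflated action $m\cdot v=\psi(m)v$ on $V$. Hence $K\otimes_K V\cong V$ as $KM$-modules, and this identification is natural in $V$ and $W$.

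Next, since $V$ is a projective $K$-module by hypothesis, Theorem~\ref{t:to.triv} with $U=K$ gives $\Ext^n_{KM}(K\otimes_K V,W)\cong \Ext^n_{KM}(K,\Hom_K(V,W))$ for all $n\geq 0$. Substituting the isomorphism from the previous step on the left, and using the definition $H^n(M,N)=\Ext^n_{KM}(K,N)$ of monoid cohomology on the right, yields the claimed isomorphism $\Ext^n_{KM}(V,W)\cong H^n(M,\Hom_K(V,W))$.

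There is no real obstacle: essentially all the content sits in Theorem~\ref{t:to.triv}, and the only thing to verify separately is the harmless identification $K\otimes_K V\cong V$ of $KM$-modules, a one-line check that the tensor action agrees with inflation along $\psi$. (It is worth noting that the $K$-projectivity hypothesis on $V$ is precisely what makes $-\otimes_K V$ and $\Hom_K(V,-)$ exact, hence what makes the $\Ext$-part of Theorem~\ref{t:to.triv} available; this is why the same hypothesis reappears in the statement here.)
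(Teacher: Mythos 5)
Your proposal is correct and follows exactly the paper's own argument: identify $K\otimes_K V\cong V$ as $KM$-modules (noting the tensor action agrees with inflation along $\psi$) and then apply Theorem~\ref{t:to.triv} with $U=K$, using the definition $H^n(M,-)=\Ext^n_{KM}(K,-)$. Your added remark on why the $K$-projectivity hypothesis is needed is accurate.
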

\begin{proof}
It is immediate from the definitions that $K\otimes_K V\cong V$ as a $KM$-module, and so $H^n(M,\Hom_K(V,W))\cong \Ext^n_{KM}(V,W)$ by Theorem~\ref{t:to.triv}.
\end{proof}

We now aim to compute $\Ext$ from a $KG(M)$-module (inflated to $KM$)  to a coinduced module using Theorem~\ref{t:cohomology.coinduced} and Corollary~\ref{c:to.homology}. To do so, we
shall need one last isomorphism.  Let $e\in E(M)$.  The canonical homomorphism $\psi\colon M\to G(M)$ restricts to a group homomorphism $\psi\colon G_e\to G(M)$.  Hence $KG(M)$-modules can be inflated to $KG_e$-modules.

\begin{Prop}\label{p:hom.commute}
Let $M$ be a monoid, $e\in E(M)$ and $K$ a commutative ring. Let $\psi\colon M\to G(M)$ be the canonical homomorphism.  Then, for any $KG(M)$-module $V$ and $KG_e$-module $W$, there is an isomorphism of $KM$-modules $\Hom_K(V,\Coind_e(W))\to \Coind_e(\Hom_K(V,W))$, natural in $V,W$.  Here $V$ is viewed as both a $KM$-module and a $KG_e$-module via inflation along $\psi$.
\end{Prop}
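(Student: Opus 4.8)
The plan is to exhibit an explicit map, check that it is $KM$-linear and bijective, and observe naturality. First I would unravel the definitions: since $\Coind_e(W)=\Hom_{KG_e}(KR_e,W)$, the left-hand side is $\Hom_K(V,\Hom_{KG_e}(KR_e,W))$, a $KM$-module via $(mf)(v)=m\cdot f(\psi(m)\inv v)$, where $m$ acts on $\Coind_e(W)$ by $(m\cdot\phi)(r)=\phi(rm)$ (the right $M$-action on $KR_e$); the right-hand side is $\Hom_{KG_e}(KR_e,\Hom_K(V,W))$, where $\Hom_K(V,W)$ carries the diagonal $KG_e$-action $(g\cdot\eta)(v)=g\cdot\eta(\psi(g)\inv v)$ ($V$ inflated along $\psi\colon G_e\to G(M)$, and $\psi(g)\inv$ taken in the group $G(M)$), and where $m\in M$ acts by $(m\cdot F)(r)=F(rm)$.

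Because $R_e$ is a $K$-basis of $KR_e=KeM/KR(e)$ and $\psi(r)$ is invertible in the group $G(M)$ for every $r\in R_e$ (even though $r$ need not be a unit of $M$), I would define $\Theta\colon \Hom_K(V,\Coind_e(W))\to \Coind_e(\Hom_K(V,W))$ by $\Theta(f)(r)(v)=f(\psi(r)\inv v)(r)$ on basis elements $r\in R_e$ and extend $K$-linearly in $r$. The first point to verify is that $\Theta(f)$ really lands in $\Coind_e(\Hom_K(V,W))$, i.e.\ is $KG_e$-linear for the diagonal action: for $g\in G_e$ and $r\in R_e$ one uses that $gr\in R_e$ (as $R_e$ is $G_e$-invariant), that $f(v)$ is left $KG_e$-linear on $KR_e$, and that $\psi(gr)\inv=\psi(r)\inv\psi(g)\inv$, and a two-line computation then gives $\Theta(f)(gr)=g\cdot\Theta(f)(r)$.

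The remaining checks are short computations using only $\psi(ab)=\psi(a)\psi(b)$. Linearity of $\Theta$ in $f$ is clear. For $KM$-linearity one computes $\Theta(mf)(r)(v)$ and $(m\cdot\Theta(f))(r)(v)$ and finds that both equal $f(\psi(m)\inv\psi(r)\inv v)(rm)$, the identity $\psi(rm)\inv=\psi(m)\inv\psi(r)\inv$ being exactly what transfers the $\psi$-twist on the $V$-slot (visible on the left) onto the right $M$-action on $KR_e$ (visible on the right). The map $\Psi(F)(v)(r)=F(r)(\psi(r)v)$ is then checked to be a two-sided inverse of $\Theta$ by the same identities, so $\Theta$ is an isomorphism; and since its formula involves nothing but $\psi$, evaluation, and the given module actions, it is automatically natural in $V$ and in $W$.

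Conceptually, $\Theta$ is the composite of the standard ``hom--swap'' $\Hom_K(V,\Hom_{KG_e}(KR_e,W))\cong\Hom_{KG_e}(KR_e,\Hom_K(V,W))$, in which $G_e$ acts on $\Hom_K(V,W)$ only through $W$, with the twist $F\mapsto\bigl(r\mapsto(v\mapsto F(r)(\psi(r)\inv v))\bigr)$ that converts that ``outer'' $G_e$-action into the diagonal one and simultaneously reconciles the two $KM$-structures; this also makes transparent why no hypothesis on $V$ (such as $K$-projectivity) is needed. The only place demanding care is the bookkeeping: three actions are in play at once --- the right $M$-action on $KR_e$, the $G(M)$-action on $V$ pulled back along $\psi$, and the $G_e$-action on $W$ --- and one must make sure the single twist by $\psi(r)^{\pm1}$ does both jobs at the same time. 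Everything else is routine.
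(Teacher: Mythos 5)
Your map $\Theta(f)(r)(v)=f(\psi(r)^{-1}v)(r)$ and its inverse $\Psi(F)(v)(r)=F(r)(\psi(r)v)$ are exactly the maps used in the paper's proof, and your verification of $KG_e$-linearity of the values, $KM$-linearity via $\psi(rm)^{-1}=\psi(m)^{-1}\psi(r)^{-1}$ (with the evaluation at $rm$ read in $KR_e=KeM/KR(e)$, so that it vanishes when $rm\in R(e)$), and the two-sided inverse check coincide with the paper's argument. The proposal is correct and takes essentially the same approach.
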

\begin{proof}
Define $F\colon \Hom_K(V,\Coind_e(W))\to \Coind_e(\Hom_K(V,W))$ as follows.  If $\p\colon V\to \Hom_{KG_e}(KR_e,W)$ belongs to $\Hom_K(V,\Coind_e(W))$, then define $F(\p)\colon KR_e\to \Hom_K(V,W)$ by $F(\p)(r)(v) = \p(\psi(r)\inv v)(r)$ for $r\in R_e$.  First we check that $F(\p)$ is a $KG_e$-module homomorphism.  Indeed, we have that $F(\p)(gr)(v) = \p(\psi(gr)\inv v)(gr) = g\p(\psi(r)\inv \psi(g)\inv v)(r) = g(F(\p)(r)(\psi(g)\inv v)) = (g(F(\p)(r)))(v)$.     We next check that $F$ is a $KM$-module homomorphism.  We compute $F(m\p)(r)(v) = ((m\p)(\psi(r)\inv v))(r)$.  But we have $(m\p)(\psi(r)\inv v) = m\p(\psi(m)\inv \psi(r)\inv v)$, and so
\begin{align*}
((m\p)(\psi(r)\inv v))(r) &= \p(\psi(m)\inv \psi(r)\inv v)(rm)\\ &=\begin{cases}\p(\psi(rm)\inv v)(rm), &\text{if}\ rm\in R_e\\ 0, & \text{else.}\end{cases}
\end{align*}
 On  the other hand, $(mF(\p))(r) = F(\p)(rm)$, and this is $0$ if $rm\notin R_e$,
whereas if $rm\in R_e$, then $F(\p)(rm)(v) = \p(\psi(rm)\inv v)(rm)$. Thus $F$ is a $KM$-module homomorphism, as it is obviously $K$-linear.

We define an inverse $G\colon \Coind_e(\Hom_K(V,W))\to \Hom_K(V,\Coind_e(W))$ by $G(\lambda)(v)(r) = \lambda(r)(\psi(r)v)$ for $r\in R_e$, $v\in V$.  Note  that $G(\lambda)(v)\in \Coind_e(W)$, for $v\in V$, because if $g\in G_e$ and $r\in R$, then
\begin{align*}
G(\lambda)(v)(gr) &= \lambda(gr)(\psi(gr)v) = (g\lambda(r))(\psi(g)\psi(r) v)\\ &= g\lambda(r)(\psi(g)\inv \psi(g)\psi(r)v) = g\lambda(r)(\psi(r)v) = gG(\lambda)(v)(r).
\end{align*}
  Let us check that $F,G$ are inverses.  Indeed, if $\p\in \Hom_K(V,\Coind_e(W))$, $r\in R_e$ and $v\in V$, then $GF(\p)(v)(r) = F(\p)(r)(\psi(r)v) = \p(\psi(r)\inv \psi(r)v)(r) = \p(v)(r)$.  Similarly, if $\lambda\in \Coind_e(\Hom_K(V,W))$, then \[FG(\lambda)(r)(v) = G(\lambda)(\psi(r)\inv v)(r) = \lambda(r)(\psi(r)\psi(r)\inv v) = \lambda(r)(v).\]  Thus $F,G$ are inverse isomorphisms.  This completes the proof.
\end{proof}

The next result is the main result of this section.

\begin{Thm}\label{t:ext.coinduced}
Let $M$ be a monoid with group completion $G(M)$,  $e\in E(M)$ and $K$ a commutative ring. Let $V$ be a $KG(M)$-module which is projective as a $K$-module.  Then
\begin{enumerate}
  \item There is a spectral sequence \[\Ext^p_{KG_e}(\til H_{q-1}(\mathcal B(R(e)\times M),K),\Hom_K(V,W))\Rightarrow_p \Ext_{KM}^n(V,\Coind_e(W)).\]
  \item If $M$ is right p.p.\ (e.g., regular), then there is a spectral sequence  \[\Ext^p_{KG_e}(\til H_{q-1}(\Delta(\Omega(R(e))),K),\Hom_K(V,W))\Rightarrow_p  \Ext_{KM}^n(V,\Coind_e(W)).\]
\end{enumerate}
If, in addition,  $W$ is an injective $KG_e$-module, then
\begin{enumerate}[resume]
  \item For all $n\geq 0$, \[\Ext_{KM}^n(V,\Coind_e(W))\cong \Hom_{KG_e}(\til H_{n-1}(\mathcal B(R(e)\times M),K),\Hom_K(V,W)).\]
  \item If $M$ is right p.p.\ (e.g., regular), then \[\Ext_{KM}^n(V,\Coind_e(W))\cong \Hom_{KG_e}(\til H_{n-1}(\Delta(\Omega(R(e))),K),\Hom_K(V,W))\] for all $n\geq 0$.
\end{enumerate}
\end{Thm}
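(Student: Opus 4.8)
The plan is to reduce the computation of $\Ext_{KM}^n(V,\Coind_e(W))$ to the cohomology of a coinduced module, which was already computed in Theorem~\ref{t:cohomology.coinduced}, by two successive rewritings. Since $V$ is a $KG(M)$-module that is projective over $K$, I would first apply Corollary~\ref{c:to.homology} with the $KM$-module $\Coind_e(W)$ in place of $W$, obtaining
\[\Ext_{KM}^n(V,\Coind_e(W))\cong H^n(M,\Hom_K(V,\Coind_e(W)))\]
for all $n\geq 0$. Then I would invoke Proposition~\ref{p:hom.commute} to identify the $KM$-module $\Hom_K(V,\Coind_e(W))$ with $\Coind_e(\Hom_K(V,W))$, where $V$ is inflated to a $KG_e$-module along $\psi\colon G_e\to G(M)$ and $G_e$ acts on $\Hom_K(V,W)$ by $(gf)(v)=gf(\psi(g)\inv v)$. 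Combining the two gives $\Ext_{KM}^n(V,\Coind_e(W))\cong H^n(M,\Coind_e(\Hom_K(V,W)))$.

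With this identification in hand, I would apply Theorem~\ref{t:cohomology.coinduced} with its $KG_e$-module taken to be $\Hom_K(V,W)$. Parts (1) and (2) of that theorem then produce exactly the two spectral sequences asserted in items (1) and (2). For items (3) and (4), I first need that $\Hom_K(V,W)$ is an injective $KG_e$-module whenever $W$ is. This I would obtain from Theorem~\ref{t:to.triv} applied to the monoid $G_e$: since $G_e$ is a group, $G(G_e)=G_e$, so $V$ (inflated along $\psi$) is a $KG(G_e)$-module which is projective over $K$, and the final sentence of Theorem~\ref{t:to.triv} then gives that $\Hom_K(V,W)$ is injective over $KG_e$. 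Parts (3) and (4) of Theorem~\ref{t:cohomology.coinduced} now apply and yield the claimed $\Hom_{KG_e}$-descriptions.

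The main thing to be careful about is the bookkeeping of the various $KG_e$-module structures: one must check that the $G_e$-action on $\Hom_K(V,W)$ implicit in Proposition~\ref{p:hom.commute} is literally the conjugation action for which Theorem~\ref{t:to.triv} over $G_e$ produces an injective module, and that all the isomorphisms above are natural, so that the spectral sequence of Theorem~\ref{t:cohomology.coinduced} genuinely transports to one converging to $\Ext_{KM}^n(V,\Coind_e(W))$. I do not anticipate a genuine obstacle beyond this alignment of structures — the substance of the theorem is already packaged in Corollary~\ref{c:to.homology}, Proposition~\ref{p:hom.commute}, Theorem~\ref{t:cohomology.coinduced}, and Theorem~\ref{t:to.triv}, and the remaining work is simply to chain them together correctly.
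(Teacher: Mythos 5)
Your proposal is correct and follows the paper's own proof exactly: Corollary~\ref{c:to.homology} plus Proposition~\ref{p:hom.commute} give $\Ext^n_{KM}(V,\Coind_e(W))\cong H^n(M,\Coind_e(\Hom_K(V,W)))$, and then Theorem~\ref{t:cohomology.coinduced} applies, with injectivity of $\Hom_K(V,W)$ over $KG_e$ supplied by Theorem~\ref{t:to.triv} applied to the group $G_e$. The attention you pay to matching the $G_e$-action on $\Hom_K(V,W)$ across the cited results is exactly the right point to check, and it does check out.
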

\begin{proof}
Corollary~\ref{c:to.homology} and Proposition~\ref{p:hom.commute} yield $\Ext^n_{KM}(V,\Coind_e(W))\cong H^n(M,\Hom_K(V,\Coind_e(W)))\cong H^n(M,\Coind_e(\Hom_K(V,W)))$.  The result then follows from Theorem~\ref{t:cohomology.coinduced}, noting  that $\Hom_K(V,W)$ is an injective $KG_e$-module whenever $W$ is injective by Theorem~\ref{t:to.triv} applied to $G_e$.
\end{proof}

 If $V$ is a  $KG(M)$-module that is finitely generated and projective over $K$, then $V^*\otimes_K W\cong \Hom_K(V,W)$ as $KM$-modules for any $KM$-module $W$.  Indeed, the standard $K$-linear isomorphism takes $f\otimes w$ to the mapping $v\mapsto f(v)w$.  Therefore, $mf\otimes mw$ is sent to the map taking $v$ to $(mf)(v)mw = f(\psi(m)\inv v)mw = mf(\psi(m)\inv v)w$, which shows this isomorphism is an isomorphism of $KM$-modules.  Moreover, if $M$ is finite and $K$ is a field, then every finite dimensional $KG(M)$-module is finitely generated projective over $K$, and if the characteristic of $K$ does not divide the order of $|G_e|$ for $e\in E(M)$, then every $KG_e$-module is injective. Hence we deduce the following from Theorem~\ref{t:ext.coinduced}.

\begin{Cor}\label{c:main.fd.coind}
Let $M$ be a finite monoid with group completion $G(M)$ and $e\in E(M)$.  Let $K$ be a field,  $V$ a finite dimensional $KG(M)$-module and $W$ a $KG_e$-module.
\begin{enumerate}
  \item There is a spectral sequence \[ \Ext^p_{KG_e}(\til H_{q-1}(\mathcal B(R(e)\rtimes M),K),V^\ast\otimes_K W)\Rightarrow_p \Ext^n_{KM}(V,\Coind_e(W)).\]
  \item If $M$ is right p.p.~(in particular, regular), there is a spectral sequence \[\Ext^p_{KG_e}(\til H_{q-1}(\Delta(\Omega(R(e))),K),V^\ast\otimes_K W)\Rightarrow_p \Ext^n_{KM}(V,\Coind_e(W)).\]
\end{enumerate}
Moreover, if the characteristic of $K$ does not divide $|G_e|$, then
\begin{enumerate}[resume]
  \item For all $n\geq 0$, \[\Ext^n_{KM}(V,\Coind_e(W))\cong \Hom_{KG_e}(\til H_{n-1}(\mathcal B(R(e)\rtimes M),K),V^\ast\otimes_K W).\]
  \item If $M$ is right p.p.~(in particular, regular), then, for all $n\geq 0$, \[\Ext^n_{KM}(V,\Coind_e(W))\cong \Hom_{KG_e}(\til H_{n-1}(\Delta(\Omega(R(e))),K),V^\ast\otimes_K W).\]
\end{enumerate}
\end{Cor}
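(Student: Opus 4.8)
The plan is to obtain all four items directly from Theorem~\ref{t:ext.coinduced}, using that $K$ is a field and $M$ is finite to discharge its hypotheses and to rewrite its coefficient module.

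First I would note that over a field every module is projective, so $V$ is projective as a $K$-module and Theorem~\ref{t:ext.coinduced}(1),(2) apply, producing the spectral sequences
\[\Ext^p_{KG_e}(\til H_{q-1}(\mathcal B(R(e)\rtimes M),K),\Hom_K(V,W))\Rightarrow_p \Ext_{KM}^n(V,\Coind_e(W))\]
and its $\Delta(\Omega(R(e)))$-analogue when $M$ is right p.p. It then remains only to identify $\Hom_K(V,W)$ with $V^\ast\otimes_K W$ as $KG_e$-modules, where $V$ is inflated to $KG_e$ along $\psi\colon G_e\to G(M)$. Since $\dim_K V<\infty$, $V$ is finitely generated and projective over $K$, and the $K$-linear isomorphism $V^\ast\otimes_K W\to\Hom_K(V,W)$ sending $f\otimes w$ to $v\mapsto f(v)w$ is $KG_e$-equivariant by the computation carried out in the paragraph preceding the statement: that computation is phrased for $KM$-modules, but applies verbatim with the group $G_e$ in place of $M$, the canonical map from a group to its group completion being the identity. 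Substituting this isomorphism on the $E_2$-pages yields (1) and (2).

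For (3) and (4), I would invoke Maschke's theorem: when $\mathrm{char}\,K\nmid|G_e|$ the finite group algebra $KG_e$ is semisimple, so every $KG_e$-module $W$ is injective. Hence the additional hypothesis ``$W$ injective'' in Theorem~\ref{t:ext.coinduced}(3),(4) holds automatically, and those isomorphisms, rewritten through $V^\ast\otimes_K W\cong\Hom_K(V,W)$ as above, become exactly the asserted isomorphisms $\Ext^n_{KM}(V,\Coind_e(W))\cong\Hom_{KG_e}(\til H_{n-1}(-),V^\ast\otimes_K W)$.

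I do not anticipate a genuine obstacle: the proof is a matter of specializing Theorem~\ref{t:ext.coinduced} and relabelling coefficients. The one point deserving a line of care is the $KG_e$-equivariance of $V^\ast\otimes_K W\cong\Hom_K(V,W)$, which is a special case of the equivariance already verified before the statement; everything else---projectivity of $V$ over $K$, and injectivity of every $W$ over $KG_e$---is immediate from $K$ being a field of good characteristic.
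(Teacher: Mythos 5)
Your proposal is correct and is essentially the paper's own derivation: the paper deduces the corollary from Theorem~\ref{t:ext.coinduced} by noting that a finite dimensional module over a field is finitely generated projective over $K$, that $V^\ast\otimes_K W\cong\Hom_K(V,W)$ equivariantly (via exactly the computation you cite, restricted to $G_e$), and that $KG_e$ is semisimple in good characteristic so every $W$ is injective. No gaps.
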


 Using the duality $D$ from Remark~\ref{r:duality} and Proposition~\ref{p:dual.induced}, we obtain the following from Corollary~\ref{c:main.fd.coind} applied to $M^{op}$.

\begin{Cor}\label{c:ext.ind}
Let $M$ be a finite monoid with group completion $G(M)$ and $e\in E(M)$.  Let $K$ be a field, $V$ a $KG(M)$-module and $W$ a $KG_e$-module, both finite dimensional.
\begin{enumerate}
  \item There is a spectral sequence  \[\Ext^p_{KG_e}(\til H_{q-1}(\mathcal B(L(e)\rtimes M^{op}),K),D(V^\ast)\otimes_K D(W))\Rightarrow_p \Ext^n_{KM}(\Ind_e(W),V)\] where we work in the category of right $KG_e$-modules on the left hand side.
  \item If $M$ is left p.p.~(in particular, regular), there is a spectral sequence \[\Ext^p_{KG_e}(\til H_{q-1}(\Delta(\Omega_{M^{op}}(L(e))),K),D(V^\ast)\otimes_K D(W))\Rightarrow_p \Ext^n_{KM}(\Ind_e(W),V)\] where we work in the category of right $KG_e$-modules on the left hand side.
\end{enumerate}
If the characteristic of $K$ does not divide $|G_e|$, then
\begin{enumerate}[resume]
  \item For all $n\geq 0$, \[\Ext^n_{KM}(\Ind_e(W),V)\cong \Hom_{KG_e}(\til H_{n-1}(\mathcal B(L(e)\rtimes M^{op}),K),D(V^\ast)\otimes_K D(W))\] where $\Hom_{KG_e}$ is taken in the category of right $KG_e$-modules.
  \item If $M$ is left p.p.~(in particular, regular), then, for all $n\geq 0$, \[\Ext^n_{KM}(\Ind_e(W),V)\cong \Hom_{KG_e}(\til H_{n-1}(\Delta(\Omega_{M^{op}}(L(e))),K),D(V^\ast)\otimes_K D(W))\] where $\Hom_{KG_e}$ is taken in the category of right $KG_e$-modules.
\end{enumerate}
\end{Cor}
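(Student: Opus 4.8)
The plan is to deduce all four statements from Corollary~\ref{c:main.fd.coind} applied to the opposite monoid $M^{op}$, transporting the result back to $M$ through the vector space duality $D$ of Remark~\ref{r:duality} together with Proposition~\ref{p:dual.induced}. First I would record the dictionary between $M$ and $M^{op}$: one has $G(M^{op})\cong G(M)^{op}$, which is identified with $G(M)$ via inversion; the maximal subgroup of $M^{op}$ at $e$ is $G_e^{op}$, identified with $G_e$ via inversion; $M^{op}$ is right p.p.\ precisely when $M$ is left p.p.; the set ``$R_e$'' computed inside $M^{op}$ is $\{m\mid Mm=Me\}=L_e$, so the $M^{op}$-set pair ``$(eM^{op},R(e))$'' is the pair $(Me,L(e))$, the category of elements ``$R(e)\rtimes M^{op}$'' is $L(e)\rtimes M^{op}$, and the poset ``$\Omega(R(e))$'' is $\Omega_{M^{op}}(L(e))$. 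Finally, as recalled in the paragraph before Proposition~\ref{p:dual.induced}, the functor $\Coind_e$ built from $M^{op}$ sends a right $KG_e$-module $U$ to $\Hom_{KG_e}(KL_e,U)$, which is a right $KM$-module.

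The second step is a duality computation for the $\Ext$ group in question. Since $\Ind_e(W)$ and $V$ are finite dimensional over $K$, Remark~\ref{r:duality} gives $\Ext^n_{KM}(\Ind_e(W),V)\cong \Ext^n_{KM^{op}}(D(V),D(\Ind_e(W)))$ for all $n$. By Proposition~\ref{p:dual.induced}, $D(\Ind_e(W))=\Hom_K(\Ind_e(W),K)\cong \Hom_{KG_e}(KL_e,D(W))$, and by the previous step this is exactly $\Coind_e(D(W))$ computed in $M^{op}$, where $D(W)$ is the right $KG_e$-module dual to $W$. Hence
\[\Ext^n_{KM}(\Ind_e(W),V)\cong \Ext^n_{KM^{op}}\bigl(D(V),\Coind_e(D(W))\bigr),\]
with $\Coind_e$ taken over $M^{op}$, $D(V)$ a finite dimensional $K[G(M^{op})]$-module, and $D(W)$ a finite dimensional module for the maximal subgroup $G_e^{op}$ of $M^{op}$ at $e$.

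The third step is to feed this into Corollary~\ref{c:main.fd.coind} for $M^{op}$ and translate back into the language of $M$ using the dictionary above. Items (1) and (2) of that corollary produce spectral sequences whose $E_2$-terms are $\Ext^p$, in the category of left $K[G_e^{op}]$-modules (equivalently, right $KG_e$-modules), of $\til H_{q-1}(\mathcal B(L(e)\rtimes M^{op}),K)$ (respectively $\til H_{q-1}(\Delta(\Omega_{M^{op}}(L(e))),K)$) into $(D(V))^{\ast}\otimes_K D(W)$, where $\ast$ here is the contragredient over $K[G(M^{op})]$. It then remains to identify $(D(V))^{\ast}$ with $D(V^{\ast})$; this is a short check using that, for a finite dimensional module over a group algebra, the canonical map to the double vector space dual is equivariant, and that the isomorphism $G(M^{op})\cong G(M)$ is inversion, which interchanges the side-swapping plain dual $D$ with the side-preserving contragredient $\ast$. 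This yields (1) and (2). For (3) and (4), the hypothesis that the characteristic of $K$ does not divide $|G_e|=|G_e^{op}|$ makes every $K[G_e^{op}]$-module injective, so Corollary~\ref{c:main.fd.coind}(3),(4) applied to $M^{op}$ give the collapsed isomorphisms after the same translation.

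The main obstacle is purely bookkeeping: keeping straight the two distinct ``dual'' operations --- the plain vector space dual $D$, which exchanges left and right modules, and the contragredient $\ast$, which uses inversion and preserves the side --- and verifying that the chain of natural isomorphisms (Remark~\ref{r:duality}, Proposition~\ref{p:dual.induced}, the hom-tensor adjunction inside its proof, and the identifications above) is compatible, so that the coefficient module really is $D(V^{\ast})\otimes_K D(W)$ viewed as a right $KG_e$-module. No new topological input is required; all the homology of $\mathcal B(L(e)\rtimes M^{op})$ and of $\Delta(\Omega_{M^{op}}(L(e)))$ is inherited directly from Corollary~\ref{c:main.fd.coind} applied to $M^{op}$.
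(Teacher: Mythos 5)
Your proposal is correct and follows essentially the same route as the paper, which simply cites Remark~\ref{r:duality}, Proposition~\ref{p:dual.induced}, and Corollary~\ref{c:main.fd.coind} applied to $M^{op}$. The extra care you take with the dictionary for $M^{op}$ and with the identification $(D(V))^{\ast}\cong D(V^{\ast})$ as right $KG_e$-modules is exactly the bookkeeping the paper leaves implicit, and it checks out.
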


For a Dedekind-finite monoid $M$ with group of units $G$ and ideal of singular elements $S=M\setminus G$,  we have homomorphisms $\rho\colon KM\to KG$ and $\psi\colon KM\to KG(M)$ where $\rho$ is the identity on $G$ and annihilates $S$ and $\psi$ is induced by the canonical map $M\to G(M)$.  Hence we can inflate both $KG$ and $KG(M)$-modules to $KM$-modules.
Note that since each right invertible element of $M$ is left invertible, we have that $R_1=G_1=G=L_1$ and $R(1)=S=L(1)$.
Moreover, if $W$ is a $KG$-module, then $\Coind_1(W) =\Hom_{KG}(KG,W)\cong W\cong KG\otimes_{KG} W=\Ind_1(W)$ as $KM$-modules.
Therefore, Corollaries~\ref{c:main.fd.coind} and~\ref{c:ext.ind} specialize to the following result, where, for simplicity, we state only the version for fields of good characteristic.

\begin{Cor}\label{c:main.fd}
Let $M$ be a finite monoid with group of units $G$, ideal of singular elements $S$ and group completion $G(M)$.  Let $K$ be a field whose characteristic does not divide $|G|$, $V$ a $KG(M)$-module and $W$ a $KG$-module, both finite dimensional.
\begin{enumerate}
  \item $\Ext^n_{KM}(V,W)\cong \Hom_{KG}(\til H_{n-1}(S\mathcal EM,K),V^\ast\otimes_K W)$ for all $n\geq 0$.
  \item If $M$ is right p.p.~(in particular, regular), then \[\Ext^n_{KM}(V,W)\cong \Hom_{KG}(\til H_{n-1}(\Delta(\Omega(S)),K),V^\ast\otimes_K W).\]
  \item $\Ext^n_{KM}(W,V)\cong \Hom_{KG}(\til H_{n-1}(\mathcal EM^{op}S,K),D(V^\ast)\otimes_K D(W))$ for all $n\geq 0$, where $\Hom_{KG}$ is taken in the category of right $KG$-modules.
  \item If $M$ is left p.p.~(in particular, regular), then \[\Ext^n_{KM}(W,V)\cong \Hom_{KG}(\til H_{n-1}(\Delta(\Omega_{M^{op}}(S)),K),D(V^\ast)\otimes_K D(W))\] where $\Hom_{KG}$ is taken in the category of right $KG$-modules.
\end{enumerate}
\end{Cor}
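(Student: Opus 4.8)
The plan is to deduce all four items by specializing Corollary~\ref{c:main.fd.coind} and Corollary~\ref{c:ext.ind} to the idempotent $e=1$. First I would assemble the identifications that hold at $e=1$ for a Dedekind-finite (in particular, finite) monoid: since every right invertible element of $M$ is left invertible, the $\mathscr R$-class and the $\mathscr L$-class of $1$ both coincide with the group of units, so $R_1=G_1=G=L_1$ and $R(1)=S=L(1)$; moreover $\Coind_1(W)=\Hom_{KG}(KG,W)\cong W\cong KG\otimes_{KG}W=\Ind_1(W)$ as $KM$-modules for any $KG$-module $W$. Under these identifications $\mathcal B(R(1)\rtimes M)=\mathcal B(S\rtimes M)=S\mathcal EM$ and $\Omega(R(1))=\Omega(S)$, and dually $\mathcal B(L(1)\rtimes M^{op})=\mathcal B(S\rtimes M^{op})=\mathcal EM^{op}S$ and $\Omega_{M^{op}}(L(1))=\Omega_{M^{op}}(S)$.

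Next I would invoke the hypothesis on the characteristic. Since $\mathrm{char}\,K$ does not divide $|G|=|G_1|$, Maschke's theorem makes every $KG_1$-module injective, so the spectral sequences of Corollary~\ref{c:main.fd.coind}(1),(2) and Corollary~\ref{c:ext.ind}(1),(2) collapse on the line $p=0$ and we are in the situation of parts (3),(4) of those corollaries. Finite dimensionality of $V$ and $W$ is what licenses the identification $\Hom_K(V,W)\cong V^\ast\otimes_K W$ (already built into those corollaries) and ensures that the vector space duals $D(V^\ast)$ and $D(W)$ appearing on the $M^{op}$-side are the finite dimensional right $KG$-modules intended.

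With these pieces in place the four items are immediate: (1) and (2) are Corollary~\ref{c:main.fd.coind}(3) and (4) with $e=1$ and $\Coind_1(W)$ rewritten as $W$; (3) and (4) are Corollary~\ref{c:ext.ind}(3) and (4) with $e=1$ and $\Ind_1(W)$ rewritten as $W$. There is essentially no obstacle here beyond matching notation; the only point that merits a moment's care is checking that the automorphism group of the $M$-set pair $(eM,R(e))$ at $e=1$ is genuinely all of $G$, so that the $KG$-action on $\til H_{n-1}(S\mathcal EM,K)$ (and on $\til H_{n-1}(\mathcal EM^{op}S,K)$) asserted in the statement agrees with the $KG_e$-action supplied by the general theory, which is precisely the content of the identification $G_1=G$ recorded in the first paragraph.
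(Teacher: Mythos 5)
Your proposal is correct and matches the paper's own argument: the paper likewise derives this corollary by specializing Corollaries~\ref{c:main.fd.coind} and~\ref{c:ext.ind} to $e=1$, using exactly the identifications $R_1=G_1=G=L_1$, $R(1)=S=L(1)$, and $\Coind_1(W)\cong W\cong \Ind_1(W)$, with the good-characteristic hypothesis collapsing the spectral sequences. Nothing further is needed.
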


As a simple example consider the full transformation monoid $T_n$ of all mappings on the set $\{1,\ldots, n\}$ (acting on the left) and put $K=\mathbb C$, the field of complex numbers.  The group of units of $T_n$ is the symmetric group $S_n$.  Then $S=T_n\setminus S_n$ is the ideal of non-surjective mappings.  The group completion $G(T_n)$ is trivial since if $c$ is a constant mapping then $cf=c$ for all $f$, and hence $f$ maps to $1$ in $G(T_n)$.  It is well known and easy to check that $T_n$ is regular and that $fT_n\subseteq gT_n$ if and only if the image of $f$ is contained in the image of $g$~\cite{CP,repbook}.  It follows that $\Omega(S)$ can be identified with the poset of proper nonempty subsets of $\{1,\ldots, n\}$.  Therefore, $\Delta(\Omega(S))$ can be identified with the barycentric subdivision of the simplicial complex with vertex set $\{1,\ldots, n\}$ where any proper nonempty subset is a simplex.  But this is the boundary of the $(n-1)$-simplex with vertex set $\{1,\ldots, n\}$ and all nonempty subsets a simplex. Therefore, $\Delta(\Omega(S))$ is an $(n-2)$-sphere.  Moreover, the $S_n$-action on $\Delta(\Omega(S))$ can be identified with the action  of $S_n$ by simplicial mappings on the boundary of the $(n-1)$-simplex induced by permuting the vertices (after barycentric subdivision).  Since the even permutations preserve orientation and the odd permutations reverse orientation under this action, we deduce that $\til H_q(\Delta(\Omega(S)),\mathbb C)=0$ except when $q=n-2$, in which case it is the sign representation of $S_n$.  Thus, from Corollary~\ref{c:main.fd} we deduce that if $W$ is a simple $\mathbb CS_n$-module, then $\Ext^q_{\mathbb CM}(\mathbb C,W)=0$ unless $q=n-1$ and $W$ is the sign representation, in which case it is $\mathbb C$.
This was already proved by the author in~\cite{globaltn} using the minimal projective resolution of $\mathbb C$ (which is the augmented simplicial chain complex of the $(n-1)$-simplex with vertex set $\{1,\ldots, n\}$, on which $T_n$ acts simplicially).

Corollary~\ref{c:main.fd.coind} will typically be applied when both $V$ and $\Coind_e(W)$ are simple.  Computing $\Ext$ between simple modules for a finite dimensional algebra is an important undertaking.  The computation of $\Ext^1$ is of particular interest because it yields the quiver of the algebra~\cite{assem,benson}.  This is a major impetus for considering spectral sequences in Corollary~\ref{c:main.fd.coind}, as the five term exact sequence associated to this spectral sequence allows us to compute immediately $\Ext^1$.

\begin{Cor}\label{c:ext1}
Let $M$ be a monoid with group completion $G(M)$ and $K$ a commutative ring.  Let $e\in E(M)$, $V$ a $KG(M)$-module finitely generated and projective over $K$, and $W$ a $KG_e$-module.
\begin{enumerate}
  \item  If $eM$ is not a minimal right ideal, then $\Ext^1_{KM}(V,\Coind_e(W))\cong \Hom_{KG_e}(\til H_0(\Delta(\Omega(R(e)),K),V^\ast\otimes_K W)$.
  \item  If $M$ is finite, $K$ is a field, $W$ is finite dimensional and $Me$ is not a minimal left ideal, then \[\Ext^1_{KM}(\Ind_e(W),V)\cong \Hom_{KG_e}(\til H_0(\Delta(\Omega_{M^{op}}(L(e)),K),D(V^\ast)\otimes_K D(W))\] where $\Hom_{KG_e}$ is taken in the category of right $KG_e$-modules.
\end{enumerate}
\end{Cor}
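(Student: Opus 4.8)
The plan is to obtain both isomorphisms from the five-term exact sequence of low-degree terms of spectral sequences already established in this section, the only real work being to verify that two edge terms on the $p$-axis vanish under the stated minimality hypotheses.

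For part (1), I would start from Theorem~\ref{t:ext.coinduced}(1), which furnishes a first-quadrant cohomological spectral sequence with $E_2^{p,q}=\Ext^p_{KG_e}(\til H_{q-1}(\mathcal B(R(e)\rtimes M),K),\Hom_K(V,W))$ converging to $\Ext^{p+q}_{KM}(V,\Coind_e(W))$. Since $V$ is finitely generated and projective over $K$, the discussion following Theorem~\ref{t:ext.coinduced} lets us replace $\Hom_K(V,W)$ by $V^\ast\otimes_K W$, the identification being $G_e$-equivariant, so the $E_2$-term becomes $\Ext^p_{KG_e}(\til H_{q-1}(\mathcal B(R(e)\rtimes M),K),V^\ast\otimes_K W)$. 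The five-term exact sequence then reads
\[0\to E_2^{1,0}\to \Ext^1_{KM}(V,\Coind_e(W))\to E_2^{0,1}\xrightarrow{\,d_2\,}E_2^{2,0}.\]
The point is that $E_2^{p,0}=\Ext^p_{KG_e}(\til H_{-1}(\mathcal B(R(e)\rtimes M),K),V^\ast\otimes_K W)$, and that $eM$ fails to be a minimal right ideal exactly when it properly contains a principal right ideal, i.e.\ when some element of $eM$ does not generate $eM$, which is to say $R(e)=eM\setminus R_e\neq\emptyset$. In that case $\mathcal B(R(e)\rtimes M)$ is nonempty, so $\til H_{-1}(\mathcal B(R(e)\rtimes M),K)=0$ by our conventions, and hence $E_2^{1,0}=0=E_2^{2,0}$. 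The five-term sequence collapses to $\Ext^1_{KM}(V,\Coind_e(W))\cong E_2^{0,1}=\Hom_{KG_e}(\til H_0(\mathcal B(R(e)\rtimes M),K),V^\ast\otimes_K W)$.

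It then remains to replace $\mathcal B(R(e)\rtimes M)$ by $\Delta(\Omega(R(e)))$ inside $\til H_0$; here, unlike in Theorem~\ref{t:ext.coinduced}(2), no p.p.\ hypothesis is needed, because reduced zeroth homology depends only on the set of path components, $\til H_0(X,K)=\ker(K[\pi_0(X)]\to K)$ functorially in $\pi_0(X)$. By Remark~\ref{r:path.comp.bij}, the $G_e$-equivariant functor $\Phi_{R(e)}$ induces a $G_e$-equivariant bijection between the path components of $\mathcal B(R(e)\rtimes M)$ and those of $\Delta(\Omega(R(e)))$, whence $\til H_0(\mathcal B(R(e)\rtimes M),K)\cong \til H_0(\Delta(\Omega(R(e))),K)$ as $KG_e$-modules, which finishes (1). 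For part (2) I would run the identical argument with Corollary~\ref{c:ext.ind}(1) in place of Theorem~\ref{t:ext.coinduced}(1), working throughout in the category of right $KG_e$-modules; the hypotheses that $M$ is finite, $K$ a field and $V,W$ finite dimensional are precisely what licenses that corollary. There the role of ``$eM$ not minimal'' is played by ``$Me$ not a minimal left ideal'', which forces $L(e)=Me\setminus L_e\neq\emptyset$ and hence $\til H_{-1}(\mathcal B(L(e)\rtimes M^{op}),K)=0$, killing $E_2^{1,0}$ and $E_2^{2,0}$, and Remark~\ref{r:path.comp.bij} applied to $M^{op}$ then replaces $\mathcal B(L(e)\rtimes M^{op})$ by $\Delta(\Omega_{M^{op}}(L(e)))$, $G_e$-equivariantly.

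I do not anticipate any genuine obstacle: the whole argument is just the collapse of a five-term exact sequence. The only steps demanding attention are the elementary equivalence ``$eM$ is not a minimal right ideal $\iff R(e)\neq\emptyset$'' together with its left-handed dual, and, in part (2), keeping track of the left/right $KG_e$-module conventions and of the duality $D$; both of these are already handled in the preceding material, so this amounts to routine care rather than a real difficulty.
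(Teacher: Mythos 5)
Your proposal is correct and follows essentially the same route as the paper: kill the $q=0$ line of the $E_2$-page via the convention $\til H_{-1}(X,K)=0$ for $X\neq\emptyset$ (using that $eM$ non-minimal forces $R(e)\neq\emptyset$), collapse the five-term exact sequence, and then use Remark~\ref{r:path.comp.bij} to pass from $\mathcal B(R(e)\rtimes M)$ to $\Delta(\Omega(R(e)))$ on $\til H_0$ without any p.p.\ hypothesis. The only cosmetic difference is in part (2), where the paper deduces the statement from part (1) by duality while you rerun the five-term argument on the dual spectral sequence of Corollary~\ref{c:ext.ind}(1); since that corollary is itself obtained by duality, the two are substantively identical.
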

\begin{proof}
For the first item, notice that on the $E_2$-page of the spectral sequence in Theorem~\ref{t:ext.coinduced}(1), we have on the line $q=0$ that \[E^{p,0}_2=\Ext^p_{KG_e}(\til H_{-1}(\mathcal B(R(e)\rtimes M),K),V^\ast\otimes_K W)=0\] as $R(e)\neq \emptyset$ because $eM$ is not minimal.  Therefore, the five term exact sequence of this spectral sequence reduces to an exact sequence \[0 \to \Ext^1_{KM}(V,\Coind_e(W))\to \Hom_{KG_e}(\til H_0(\mathcal B(R(e)\rtimes M),K),V^\ast\otimes_K W)\to 0,\]  or in other words \[\Ext^1_{KM}(V,\Coind_e(W))\cong \Hom_{KG_e}(\til H_0(\mathcal B(R(e)\rtimes M),K),V^\ast\otimes_K W).\]  But $\mathcal B\Phi_{R(e)}\colon \mathcal B(R(e)\rtimes M)\to \Delta(\Omega(R(e)))$ is $G_e$-equivariant and induces a bijection of the sets of path components by Remark~\ref{r:path.comp.bij}, and so \[\til H_0(\mathcal B(R(e)\rtimes M),K)\cong \til H_0(\Delta(\Omega(R(e))),K)\] as $KG_e$-modules.  This completes the proof of the first item.

The second item follows from the first using duality (Remark~\ref{r:duality} and Proposition~\ref{p:dual.induced}).
\end{proof}

The special case of the following result where $M$ is finite and $K$ is a field can be extracted from the proof of~\cite[Lemma~2B.5]{Khovanov} or from~\cite{DO}.  See~\cite[Remark~2B.12]{Khovanov}.  It can also be deduced with some work from Theorem~\ref{t:two.trivials}.

\begin{Cor}\label{c:ext1.two.trivs}
Let $M$ be a Dedekind-finite monoid with group of units $G$ and ideal of singular elements $S=M\setminus G$ with $M\neq G$.   Let $K$ be a commutative ring.   Then $\Ext^1_{KM}(K,K_{(G)})\cong \til H^0(\Delta(\Omega(S)),K)^G$.  If $M$ is finite and $K$ is a field, then $\Ext^1_{KM}(K_{(G)},K)\cong \til H^0(\Delta(\Omega_{M^{op}}(S)),K)^G$.
\end{Cor}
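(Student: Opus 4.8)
The plan is to obtain both statements as the $e=1$ specialization of Corollary~\ref{c:ext1}, after which the only remaining work is translating a $\Hom_{KG}$-group into $G$-invariants of reduced cohomology. First I would collect the identifications valid at $e=1$. Since $M$ is Dedekind-finite, any $m$ with $mM=M$ is left invertible, hence right invertible, hence a unit, so $R_1=\{m\mid mM=M\}=G=G_1$ and $R(1)=M\setminus G=S$, and dually $L_1=G$ and $L(1)=S$. As $M\neq G$ we have $S\neq\emptyset$, so $1\cdot M=M$ is not a minimal right ideal and $M\cdot 1=M$ is not a minimal left ideal, which are exactly the non-minimality hypotheses of Corollary~\ref{c:ext1}. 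Moreover $KR_1=KM/KS\cong KG$ as a $KG$--$KM$-bimodule (the right $KM$-action factoring through $\rho$), whence $\Coind_1(K)=\Hom_{KG}(KR_1,K)\cong K$ as a $K$-module, and unwinding the $KM$-action (units act trivially, elements of $S$ act as $0$) identifies this $KM$-module with $K_{(G)}$; symmetrically $\Ind_1(K)=KL_1\otimes_{KG}K\cong K_{(G)}$. Finally, the trivial $KM$-module $K$ is the inflation of the trivial $KG(M)$-module and is free of rank one over $K$ (so finitely generated projective over $K$, and finite dimensional when $K$ is a field), with $K^\ast\otimes_K K\cong K$ the trivial $KG$-module and $D(K^\ast)\otimes_K D(K)\cong K$ the trivial right $KG$-module.

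Substituting $e=1$, $V=K$ and $W=K$ into Corollary~\ref{c:ext1}(1) now gives $\Ext^1_{KM}(K,K_{(G)})\cong\Hom_{KG}(\til H_0(\Delta(\Omega(S)),K),K)$, and, when $M$ is finite and $K$ is a field, Corollary~\ref{c:ext1}(2) gives $\Ext^1_{KM}(K_{(G)},K)\cong\Hom_{KG}(\til H_0(\Delta(\Omega_{M^{op}}(S)),K),K)$ in the category of right $KG$-modules, with $K$ the trivial module in each case. It then suffices to note that for any (left or right) $KG$-module $N$ a $K$-linear map $N\to K$ is $KG$-linear into the trivial module precisely when it is fixed by the contragredient $G$-action on $N^\ast=\Hom_K(N,K)$, so that $\Hom_{KG}(N,K)\cong(N^\ast)^G$; taking $N=\til H_0(\Delta(\Omega(S)),K)$ and invoking the universal coefficients isomorphism $\Hom_K(\til H_0(X,K),K)\cong\til H^0(X,K)$ --- which carries no $\Ext$ term because the simplicial augmentation splits, and which is natural, hence $G$-equivariant --- yields $\Ext^1_{KM}(K,K_{(G)})\cong\til H^0(\Delta(\Omega(S)),K)^G$. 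The second assertion follows word for word with right $KG$-modules and $\Omega_{M^{op}}(S)$ in place of $\Omega(S)$.

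The step I would treat with the most care is the bundle of module identifications $\Coind_1(K)\cong K_{(G)}\cong\Ind_1(K)$ as $KM$-modules, together with checking that the universal coefficients isomorphism intertwines the simplicial $G$-action on $\til H^0$ with the contragredient action on $(\til H_0)^\ast$, so that passing to $G$-invariants on both sides is legitimate; everything else is a direct substitution into Corollary~\ref{c:ext1}. One could alternatively extract this from Theorem~\ref{t:two.trivials} via the long exact sequences in monoid and group cohomology, but the route through Corollary~\ref{c:ext1} is shorter.
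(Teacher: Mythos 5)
Your proposal is correct and follows essentially the same route as the paper: specialize Corollary~\ref{c:ext1} to $e=1$ (using $R(1)=S$, $\Coind_1(K)\cong K_{(G)}$, and triviality of $K^\ast\otimes_K K$), then convert $\Hom_{KG}(\til H_0(-),K)$ into $\til H^0(-)^G$ via the splitting of the augmentation sequence and naturality. The only cosmetic difference is that you invoke Corollary~\ref{c:ext1}(2) directly for the second statement where the paper cites duality (Remark~\ref{r:duality}), but these amount to the same thing.
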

\begin{proof}
Recall that $R(1)=S$, $R_1=G_1=G$ and $\Coind_1(V)=V$ (inflated to $KM$) for a $KG$-module $V$.
Since $K^*$ restricts to $KG$ as the trivial module and the tensor product of two trivial modules is trivial,  Corollary~\ref{c:ext1} yields $\Ext^1_{KM}(K,K_{(G)})\cong \Hom_{KG}(\til H_0(\Delta(\Omega(S)),K),K)$.  Note that, for any $KG$-module $V$, one has $\Hom_{KG}(V,K)\cong \Hom_K(V,K)^G$.  For any space $X$, one has $\til H^0(X,K)\cong \Hom_K(\til H_0(X,K),K)$ as the exact sequence $0\to \til H_0(X,K)\to H_0(X,K)\to K\to 0$ splits, and hence \[0\to K\to \Hom_K(H_0(X,K),K)\to \Hom_K(\til H_0(X,K),K)\to 0\] is exact.  But $\Hom_K(H_0(X,K),K)=\Hom_K(C_0(X,K)/B_0(X,K),K)\cong Z^0(X,K)$, and the image of $K$ consists of the constant mappings.  Thus $\til H^0(X,K)\cong \Hom_K(\til H_0(X,K),K)$. Moreover, this isomorphism is natural in $X$.  Therefore, we have $\Hom_K(\til H_0(\Delta(\Omega(S)),K),K)\cong \til H^0(\Delta(\Omega(S)),K)$ as right $KG$-modules, and so putting it all together  yields $\Ext^1_{KM}(K,K_{(G)})\cong \Hom_{KG}(\til H_0(\Delta(\Omega(S)),K),K)\cong \til H^0(\Delta(\Omega(S)),K)^G$.

The second item follows from the first and Remark~\ref{r:duality}.
\end{proof}

\section{The global dimension of the affine monoid}\label{s:affine}
Recall that the (left) \emph{global dimension} of a ring $R$ is the supremum of the projective (equivalently, injective) dimensions of (left) $R$-modules. Alternatively, it is the supremum of the integers $n$ such that $\Ext^n_R(M,N)\neq 0$ for some $R$-modules $M,N$.    If $A$ is a finite dimensional algebra over a field, then it is well known that the global dimension of $A$ is the maximum projective (equivalently, injective) dimension of a simple $A$-module (which might be infinite) or, equivalently, the supremum of the integers $n$ such that $\Ext^n_A(S,S')\neq 0$ for some simple modules $S,S'$; see~\cite{assem} or~\cite[Corollary~16.2]{repbook}.  Note that if $G$ is a group and $K$ is a field, then the global dimension of $KG$ is the $K$-cohomological dimension of $G$ (that is, the projective dimension of the trivial module $K$) as a consequence of Corollary~\ref{c:to.homology} applied to $M=G$.  Unfortunately, this is not in general the case for monoids.  The trivial $KM$-module is projective whenever $M$ has a right zero element but there are many such examples for which the global dimension is not $0$ (e.g., if $M=\{1,x,0\}$ with $x^2=0$, then $KM$ has infinite global dimension over any field $K$, but $M$ has $K$-cohomological dimension zero).

Let $q$ be a prime power and $\mathrm{Aff}(n,q)$ be the monoid of all affine transformations of $\mathbb F_q^n$.   Elements of $\mathrm{Aff}(n,q)$ are of the form $x\mapsto Ax+b$ where $A\in M_n(\mathbb F_q)$ and $b\in \mathbb F_q^n$.  In fact, $\mathrm{Aff}(n,q)$ is isomorphic to the semidirect product $\mathbb F_q^n\rtimes M_n(\mathbb F_q)$.  The group of units of $\mathrm{Aff}(n,q)$ is the affine general linear group $\mathrm{AGL}(n,q)\cong \mathbb F_q^n\rtimes \mathrm{GL}_n(\mathbb F_q)$. We shall prove in this section, using our topological methods and some nontrivial results from monoid representation theory, that the global dimension of $K\mathrm{Aff}(n,q)$ is $n$ whenever $K$ is a field whose characteristic does not divide $|\mathrm{AGL}(n,q)|$.  We remark that when $n=1$, this follows already from results of Margolis and the author in~\cite{rrbg}.  Note that $K\mathrm{Aff}(n,q)$ has infinite global dimension if the characteristic of $K$ divides $|\mathrm{AGL}(n,q)|$ by a result of Nico~\cite{Nico1}; see~\cite[Theorem~16.8]{repbook} for details.

We begin with some well-known facts about the semidirect product of a monoid with a group where the monoid is acting on the group.

\begin{Prop}\label{p:basic.semidirect}
Let $M$ be a monoid acting by endomorphisms on a group $G$. We write $m(g)$ for the action of $m\in M$ on $g\in G$.  Let $N=G\rtimes M$ and let $\pi\colon N\to M$ be the projection.
\begin{enumerate}
  \item If $M$ is regular, then $N$ is regular.
  \item $N(g,m)\subseteq N(h,m')$ if and only if $Mm\subseteq Mm'$.
  \item $(g,m)\in N$ is idempotent if and only if $m\in E(M)$ and $m(g)=1$.
  \item If $e\in E(M)$ is an idempotent, then the maximal subgroup at $(1,e)$ in $N$ is $e(G)\rtimes G_e$.
  \item $(g,m)N\subseteq (h,m')N$ if and only if $mM\subseteq m'M$ and $gm(G)\subseteq hm'(G)$.
  \item $N(g,m)N \subseteq N(h,m')N$ if and only if $MmM\subseteq Mm'M$.
\end{enumerate}
\end{Prop}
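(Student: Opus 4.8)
The plan is to prove all six items by direct computation in $N$ from the semidirect-product multiplication $(g_1,m_1)(g_2,m_2)=(g_1\cdot m_1(g_2),\,m_1m_2)$, using the action identities $m_1(m_2(g))=(m_1m_2)(g)$, $m(g_1g_2)=m(g_1)m(g_2)$ and $m(1)=1$, together with one recurring device: because $G$ is a group, once the $M$-coordinates of two products have been matched, the remaining equation for the $G$-coordinate can always be solved. Informally, the projection $\pi\colon N\to M$ lets one adjust the $G$-coordinate freely on the left, so the one-sided and two-sided divisibility preorders of $N$ reduce to those of $M$; the extra coset condition in (5) reflects that right multiplication only moves the $G$-coordinate within a coset of $m(G)$, whereas left multiplication sweeps it over all of $G$.

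Items (1)--(3) are quick. For (3) one squares $(g,m)$ to get $(g\cdot m(g),m^2)$, which equals $(g,m)$ exactly when $m\in E(M)$ and $m(g)=1$. For (1), given $mnm=m$ in $M$, I would check that $(n(g^{-1}),n)$ is a von Neumann inverse of $(g,m)$: the $M$-coordinate of $(g,m)(n(g^{-1}),n)(g,m)$ is $mnm=m$, while its $G$-coordinate is $g\cdot(mn)(g^{-1})\cdot(mn)(g)$, which collapses to $g$ since $mn$ acts as an endomorphism; hence $N$ is regular when $M$ is. For (2), the ``only if'' direction is immediate since $(g,m)=(1,1)(g,m)\in N(h,m')$ forces $m\in Mm'$; conversely, if $m=km'$, then for any $(a,n)\in N$ one has $(a,n)(g,m)=(b,nk)(h,m')$ for the element $b\in G$ matching the $G$-coordinate, so $N(g,m)\subseteq N(h,m')$.

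Items (5) and (6) follow the same recipe. For (5), the ``only if'' direction gives $m\in m'M$ and $g\in h\cdot m'(G)$, whence $g\cdot m(G)\subseteq h\cdot m'(G)$ using $m(G)\subseteq m'(G)$ and $m'(a)\cdot m'(G)=m'(aG)=m'(G)$; for the converse, write $m=m'k$ and $g=h\cdot m'(c)$ (from the coset condition with the identity of $G$) and note that $(g,m)(a,n)=(h,m')\bigl(c\cdot k(a),\,kn\bigr)$. For (6), ``only if'' gives $m\in Mm'M$; conversely, writing $m=km'k'$, an arbitrary element $(a,n)(g,m)(b,l)$ of $N(g,m)N$ has $M$-coordinate $(nk)\,m'\,(k'l)$, so it equals $(c,nk)(h,m')(1,k'l)$ for the unique $c\in G$ making the $G$-coordinates agree.

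The one item needing more than bookkeeping is (4), and this is where I expect the only real subtlety. I would invoke the standard fact that the maximal subgroup of $N$ at an idempotent $f$ is the group of units of the local monoid $fNf$. For $f=(1,e)$ a short computation gives $(1,e)(g,m)(1,e)=(e(g),eme)$, so as a set $fNf=\{(h,n):h\in e(G),\ n\in eMe\}$; the steps requiring care are verifying that $e(G)$ is invariant under the restriction to $eMe$ of the $M$-action and that the inherited multiplication on $fNf$ is exactly the semidirect one, which identifies $fNf$ with $e(G)\rtimes eMe$. Finally, in a semidirect product of a group $A$ by a monoid $B$, a pair $(a,b)$ is invertible precisely when $b$ is invertible in $B$ (with inverse $(b^{-1}(a^{-1}),b^{-1})$); since the group of units of $eMe$ is $G_e$, the maximal subgroup at $(1,e)$ is $e(G)\rtimes G_e$.
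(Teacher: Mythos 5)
Your proof is correct and follows essentially the same route as the paper's: direct computation with the semidirect-product multiplication, solving for the $G$-coordinate using that $G$ is a group, and, for (4), identifying $(1,e)N(1,e)$ with $e(G)\rtimes eMe$ and taking its group of units. The only cosmetic differences are that in (2), (5), (6) you verify containment of arbitrary elements where it suffices to place the generator $(g,m)$ in the relevant one- or two-sided ideal, and in (4) you exhibit the explicit inverse $(b^{-1}(a^{-1}),b^{-1})$ where the paper simply observes that a semidirect product of groups is a group.
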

\begin{proof}
Suppose that $(g,m)\in N$ and $mam=m$ with $a\in M$.  Then
\[
(g,m)(a(g\inv),a)(g,m) = (gma(g\inv)ma(g),mam)=(g,m),\]
 and so $N$ is regular.

Clearly $N(g,m)\subseteq N(h,m')$ implies $Mm\subseteq Mm'$ since $\pi$ is a homomorphism.  Conversely, if $m=am'$, then \[(ga(h\inv),a)(h,m') = (ga(h\inv)a(h),am') = (g,m),\] and so $(g,m)\in N(h,m')$.

We compute $(g,m)^2=(gm(g),m^2)$ and so $(g,m)=(g,m)^2$ if and only if $m=m^2$ and $m(g)=1$.

We claim that if $e\in E(M)$, then $(1,e)N(1,e)= e(G)\rtimes eMe$. First of all the semidirect product $e(G)\rtimes eMe$ is a monoid with identity $(1,e)$ because $e$ acts as the identity on $e(G)$. Clearly,  $e(G)\rtimes eMe$ is a submonoid of $(1,e)N(1,e)$.  But $(1,e)(g,m)(1,e) = (e(g),eme)$ and so $(1,e)N(1,e)=e(G)\rtimes eMe$.  It follows that the group of units of $(1,e)N(1,e)$ is $e(G)\rtimes G_e$ as the semidirect product of two groups is a group.

If $(g,m)=(h,m')(x,a)$, then $m=m'a$ and $g=hm'(x)$.  Therefore,  we have $mM\subseteq m'M$,  $m(G)=m'(a(G))\subseteq m'(G)$ and $g\in hm'(G)$.  But then $gm(G)\subseteq hm'(G)$.  Conversely, if $mM\subseteq m'M$ and $gm(G)\subseteq hm'(G)$, then we can write $m=m'a$ and $g=hm'(x)$ with $a\in M$ and $x\in G$.  Then $(h,m')(x,a) = (hm'(x),m'a) = (g,m)$.

Since $\pi$ is a homomorphism, $N(g,m)N\subseteq N(h,m')N$ implies that $MmM\subseteq Mm'M$.  For the converse, suppose that $m=xm'y$. Then \[(g,m)=(gx(h\inv),x)(h,m')(1,y).\]  Thus $N(g,m)N\subseteq N(h,m')N$.
\end{proof}

The following is an immediate consequence of Theorem~\ref{t:semidirect}.

\begin{Cor}\label{c:homological.epi}
Let $M$ be a finite monoid acting on a finite group $G$ and let $K$ be a field whose characteristic does not divide $|G|$.  Then $\pi\colon G\rtimes M\to M$ induces a homological epimorphism $\ov \pi\colon K[G\rtimes M]\to KM$ and hence, for any pair of $KM$-modules $V,W$, we have $\Ext^n_{K[G\rtimes M]}(V,W)\cong \Ext^n_{KM}(V,W)$.
\end{Cor}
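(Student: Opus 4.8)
The plan is to read the statement off of Theorem~\ref{t:semidirect}. The semidirect product $G\rtimes M$ is the special case of a crossed product $M'\rtimes_{\alpha,c}N'$ in which $M'=G$, $N'=M$, the weak action $\alpha\colon M\to\End(G)$ is the given (honest) action, and the cocycle $c$ is trivial; the map $\pi\colon G\rtimes M\to M$ of the corollary is exactly the crossed-product projection of that theorem (whose final sentence records that the semidirect case is included). Thus Theorem~\ref{t:semidirect} already tells us that $\ov\pi\colon K[G\rtimes M]\to KM$ is a homological epimorphism if and only if $\mathcal BG$ is $K$-acyclic, and moreover that $\Tor^{K[G\rtimes M]}_i(KM,KM)\cong\bigoplus_M H_i(G,K)$ for all $i\geq 0$.

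The one fact to supply is that $\mathcal BG$ is $K$-acyclic under our hypotheses. Since $H_n(\mathcal BG,K)\cong H_n(G,K)=\Tor^{KG}_n(K,K)$ and, by Maschke's theorem, the trivial module $K$ is projective over $KG$ when the characteristic of $K$ does not divide $|G|$, we obtain $H_0(\mathcal BG,K)=K$ and $H_n(\mathcal BG,K)=0$ for $n\geq 1$; that is, $\mathcal BG$ is $K$-acyclic (equivalently, $\bigoplus_M H_i(G,K)=0$ for $i\geq 1$). Hence $\ov\pi$ is a homological epimorphism.

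For the last assertion, every $KM$-module becomes a $K[G\rtimes M]$-module by inflation along $\ov\pi$, and the equivalence of (1) and (3) in Theorem~\ref{t:homological.epi} then shows that the natural comparison map $\Ext^n_{KM}(V,W)\to\Ext^n_{K[G\rtimes M]}(V,W)$ is an isomorphism for all $n\geq 0$ and all $KM$-modules $V,W$. There is essentially no obstacle here: the substantive content lies entirely inside Theorem~\ref{t:semidirect} (the Quillen's Theorem~A argument identifying $\mathcal B(N,M\rtimes_{\alpha,c}N,N)$ with $\coprod_N\mathcal BM$), and the only extra ingredient is the classical vanishing of the homology of a finite group in positive degrees over a field in which its order is invertible. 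The sole point demanding care is the bookkeeping of which monoid plays the role of ``$M$'' and which plays the role of ``$N$'' in the statement of Theorem~\ref{t:semidirect}.
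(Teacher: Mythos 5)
Your proposal is correct and follows exactly the paper's own argument: the paper likewise invokes semisimplicity of $KG$ (Maschke) to get $H_n(G,K)\cong\Tor^{KG}_n(K,K)=0$ for $n\geq 1$ and then cites Theorem~\ref{t:semidirect}, with the $\Ext$ comparison coming from Theorem~\ref{t:homological.epi}. Your explicit attention to the swap of roles between the two factors in Theorem~\ref{t:semidirect} is the right bookkeeping and matches the intended reading.
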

\begin{proof}
Since $KG$ is semisimple, $H_n(G,K)\cong \Tor^{KG}_n(K,K)=0$ for $n\geq 1$.  The result follows from Theorem~\ref{t:semidirect}.
\end{proof}

In what follows we shall use some structural finite semigroup theory, all of which can be found in~\cite[Appendix~A]{qtheor},~\cite{Arbib} or~\cite[Chapter~1]{repbook}. Elements $a,b\in M$ are $\mathscr J$-equivalent if $MaM=MbM$, $\mathscr L$-equivalent if $Ma=Mb$ and $\mathscr R$-equivalent if $aM=bM$.  A $\mathscr J$-class $J$ is called \emph{regular} if it contains an idempotent or, equivalently, each element of $J$ is regular.  The $\mathscr J$-class of an element $m\in M$ is denoted by $J_m$.  Any $\mathscr L$- or $\mathscr R$-class of a regular $\J$-class contains an idempotent.  The set of $\mathscr J$-classes is partially ordered by $J\leq J'$ if $MJM\subseteq MJ'M$.
Let $M$ be a finite monoid and $J$ a regular $\mathscr J$-class of $M$.  Fix an idempotent $e\in J$ and let $G_e$ be the maximal subgroup at $e$.  Note that $G_e=eMe\cap J$.  Also, one has that $m\J n$ if and only there is $x\in M$ with $m\eL x\R n$, if and only if there is $y\in M$ with $m\R y\eL n$.
Consequently, each $\mathscr R$-class of $J$ has nonempty intersection with each $\mathscr L$-class of $J$.

 Let $A$ be an index set for the set of $\mathscr R$-classes in $J$ and $B$ be an index set for the set of $\mathscr L$-classes in $J$.  For convenience we assume that the index for both the $\R$-class and $\eL$-class of $e$ is $1$.  Green's relation $\mathscr H$~\cite{Green} is the intersection ${\eL}\cap {\R}$.  We denote by $H_{ab}$ the $\mathscr H$-class $R_a\cap L_b$.  Note that $H_{11}=G_e$.  If we choose a representative $r_a\eL e$ for the $\mathscr R$-class $R_a$ with $a\in A$ and a representative $\ell_b\R e$ for each $\mathscr L$-class $L_b$ with $b\in B$, then each element of $m\in J$ can be written uniquely in the form $m=r_ag\ell_b$ where $m\in H_{ab}$ and $g\in G_e$.  For $b\in B$ and $a\in A$, we have that $\ell_b\in eM$ and $r_a\in Me$.  Therefore, $\ell_br_a\in eMe$.  Moreover, since $J\cap eMe=G_e$, we have that $\ell_br_a\in J$ if and only if $\ell_br_a\in G_e$.    Define a  $B\times A$-matrix $P$ with entries in $G_e\cup \{0\}$ (where $0$ is a symbol not in $G_e$ that will shortly be identified with the zero of the group algebra of $G_e$ over some base commutative ring) by
 \[P_{ba} = \begin{cases}\ell_br_a, & \text{if}\ \ell_br_a\in G_e\\ 0, & \text{else.}\end{cases}\]
 The matrix $P$ is called the \emph{sandwich matrix} of the $\mathscr J$-class $J$. Note that if $m=r_ag\ell_b$ and $n=r_{a'}g'\ell_{b'}$ are in $J$, then $mn\in J$ if and only if $P_{ba'}\neq 0$, in which case $mn=r_agP_{ba'}g'\ell_{b'}$.

 If $K$ is a commutative ring, then we can view $P$ as a matrix over $KG_e$.  In particular, we can talk about right and left invertibility of $P$. Note that right and left invertibility of the sandwich matrix depends only on the $\J$-class and not the choices we have made.  Sandwich matrices have played an important role in the representation theory of finite monoids since the beginning of the subject~\cite{Clifford2,Munn1,Poni}.  For example, the algebra of a finite monoid $M$ over a field $K$ is semisimple if and only if $M$ is regular, the characteristic of the field divides the order of no maximal subgroup of $M$ and each sandwich matrix is invertible over the corresponding group algebra; see~\cite[Theorem~5.21]{repbook}.

The \emph{length} of a finite chain $C$ in a poset is $|C|-1$, i.e., the dimension of the corresponding simplex in the order complex.
 The following theorem is~\cite[Theorem~4.4]{rrbg}, dualized since we are working with left modules instead of right modules. The proof of this theorem in~\cite{rrbg} used Nico's theorem~\cite{Nico2}, but unfortunately with an incorrect formulation.  While the desired result can, indeed, be deduced from Nico's work~\cite{Nico2}, a more careful argument is needed.  For the sake of completeness, we instead give a proof here deducing this from $KM$ being a directed quasi-hereditary algebra.  Let $\pd V$ denote the projective dimension of an $A$-module $V$.  Note that
 \[\pd V=\sup\{n\mid \Ext^n_A(V,-)\neq 0\}\] by a standard argument.

 \begin{Thm}\label{t:directed}
 Let $M$ be a finite regular monoid and $K$ a field whose characteristic divides the order of no maximal subgroup of $M$.  If each sandwich matrix of $M$ is right invertible over the group algebra of the corresponding maximal subgroup over $K$, then the global dimension of $KM$ is bounded by $n$ where $n$ is the length of the longest chain of principal two-sided ideals of $M$.
 \end{Thm}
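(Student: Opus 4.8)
The plan is to deduce the bound from the fact that, under the stated hypotheses, $KM$ is a \emph{directed} quasi-hereditary algebra. Concretely, for a finite regular monoid $M$ and a field $K$ of good characteristic, $KM$ is a quasi-hereditary algebra whose weight poset $\Lambda$ is the set of pairs $(J,V)$ with $J$ a (necessarily regular) $\mathscr J$-class and $V$ a simple $KG_J$-module, where two such are comparable only when the corresponding $\mathscr J$-classes are comparable in the $\mathscr J$-order, and $(J,V)$, $(J,V')$ are incomparable for $V\neq V'$; this is classical (Putcha; see also~\cite{repbook}). The role of the hypothesis on the sandwich matrices is that right-invertibility of every sandwich matrix over the group algebra of its maximal subgroup is precisely the condition under which this quasi-hereditary structure is \emph{directed}, meaning that every (left) standard module $\Delta(\lambda)$ is projective (equivalently, every costandard module is simple). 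The plan is to cite~\cite{repbook} for this, or else to reprove it from the description of the principal-series sections of $KM$ as Munn algebras over the maximal subgroup algebras; this is the genuine structural content of the theorem, and I expect it to be the main obstacle, the delicate point being that it is a \emph{global} condition on all sandwich matrices simultaneously and cannot be read off one $\mathscr J$-class in isolation.

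Granting that $KM$ is directed quasi-hereditary, the stated bound follows from a short induction over $\Lambda$. For $\lambda\in\Lambda$ write $L(\lambda)$, $P(\lambda)$, $\Delta(\lambda)$ for the simple module, its projective cover, and the standard module. Recall from the theory of quasi-hereditary algebras that $\Delta(\lambda)$ has simple top $L(\lambda)$, that $L(\lambda)$ occurs in $\Delta(\lambda)$ with composition multiplicity one, and that every remaining composition factor of $\Delta(\lambda)$ is of the form $L(\mu)$ with $\mu<\lambda$. Since $\Delta(\lambda)$ is projective, the short exact sequence $0\to\rad\Delta(\lambda)\to\Delta(\lambda)\to L(\lambda)\to 0$ gives $\pd L(\lambda)\leq 1+\pd(\rad\Delta(\lambda))$, and since every composition factor of $\rad\Delta(\lambda)$ lies among the $L(\mu)$ with $\mu<\lambda$, the long exact sequence in $\Ext$ yields $\pd(\rad\Delta(\lambda))\leq\max\{\pd L(\mu)\mid\mu<\lambda\}$. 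Writing $\ell(\lambda)$ for the length of a longest chain in $\Lambda$ with top element $\lambda$, one proves $\pd L(\lambda)\leq\ell(\lambda)$ by induction over $\Lambda$: if $\lambda$ is minimal then $\Delta(\lambda)=L(\lambda)$ is projective and $\pd L(\lambda)=0=\ell(\lambda)$; otherwise $\pd L(\lambda)\leq 1+\max\{\pd L(\mu)\mid\mu<\lambda\}\leq 1+\max\{\ell(\mu)\mid\mu<\lambda\}=\ell(\lambda)$.

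Since the global dimension of a finite-dimensional algebra is the supremum of the projective dimensions of its simple modules (as recalled at the start of this section), it follows that the global dimension of $KM$ is at most $\max_{\lambda\in\Lambda}\ell(\lambda)$, which is the length of a longest chain in $\Lambda$. Finally, because $M$ is regular every $\mathscr J$-class is regular, so a chain in $\Lambda$ projects onto a chain of $\mathscr J$-classes of the same length, and conversely every chain of $\mathscr J$-classes lifts to a chain in $\Lambda$; hence a longest chain in $\Lambda$ has the same length as a longest chain of $\mathscr J$-classes, equivalently a longest chain of principal two-sided ideals of $M$, namely $n$. Thus $\mathrm{gl.dim}\,KM\leq n$, as desired.

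Beyond the structural input on directedness, the only other point requiring care is the identification of the weight poset $\Lambda$ of the quasi-hereditary structure with the poset of principal two-sided ideals, which is where regularity of $M$ is used; everything else in the argument is the formal induction above.
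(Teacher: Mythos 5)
Your proposal is correct and follows essentially the same route as the paper: the paper's proof is exactly the observation that $KM$ is a directed quasi-hereditary algebra (the key input being that, under the sandwich-matrix hypothesis, the induced module $\Ind_e(eS)$ is the projective cover of a simple $S$ with apex $J_e$, cited from~\cite{rrbg}), followed by the same induction on the length of chains of $\mathscr J$-classes, with composition factors of the radical of the standard module having strictly larger apex. The only cosmetic difference is that the paper phrases the induction directly in terms of apexes and heights of $\mathscr J$-classes rather than through the general formalism of standard modules over a quasi-hereditary weight poset.
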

 \begin{proof}
Recall that the \emph{apex}~\cite[Chapter~5]{repbook} of a simple $KM$-module $V$ is the unique minimal $\mathscr J$-class $J$ of $M$ with $JV\neq 0$.
 Define the \emph{height} $\mathrm{ht}(J)$ of a $\mathscr J$-class $J$ of $M$ to be the length of the longest chain of $\mathscr J$-classes of $M$ whose smallest element is $J$.  We prove that if $S$ is a simple $KM$-module with apex $J$, then $\pd S\leq \mathrm{ht}(J)$ by induction on $\mathrm{ht}(J)$.  Under the hypothesis we are assuming, it is shown in~\cite[Theorem~4.4]{rrbg} that if $e$ is an idempotent and $S$ is a simple $KM$-module with apex $e$, then the natural map $\Ind_e(eS)\to S$ is the projective cover.

 Note that $\mathrm{ht}(J_e)=0$ if and only if $e=1$.  The simple modules with apex $J_1$ are inflations of a simple $KG_1$-module $V$ to $M$.  But then, since $L_1=G_1$, we have that $\Ind_1(V)=KG_1\otimes_{KG_1}V\cong V$.  Thus $V$ is a projective $KM$-module, and so $\pd V=0=\mathrm{ht}(J_1)$.  Suppose that $\pd S'\leq \mathrm{ht}(J)$ whenever $S'$ is a simple $KM$-module with apex $J$ of height at most $r$ and that $S$ is a simple $KM$-module with apex $J_e$ of height $r+1$  with $e\in E(M)$.  Then we have an exact sequence $0\to \rad(\Ind_e(eS))\to \Ind_e(eS)\to S\to 0$ with $\Ind_e(eS)$ projective.   Thus $\pd S\leq \pd \rad(\Ind_e(eS))+1$.  But by~\cite[Theorem~5.5]{repbook} every composition factor of $\rad (\Ind_e(eS))$ has apex strictly larger than $J_e$.  By induction and~\cite[Lemma~16.1(ii)]{repbook}, it follows that $\Ext^{r+1}_{KM}(\rad(\Ind_e(eS)),-)=0$ and hence $\pd(\rad\Ind_e(eS))\leq r$.  Therefore, $\pd S\leq r+1$, as required.  This completes the proof.
 \end{proof}

 We now show that the hypothesis of Theorem~\ref{t:directed} is preserved under taking a semidirect product with a group.

 \begin{Prop}\label{p:semidirect.directed}
 Let $M$ be a finite regular monoid, $G$ a group and $K$ a field whose characteristic does not divide $|G|$ or the order of any maximal subgroup of $M$.  Suppose that $M$ acts on $G$ by endomorphisms and let $N=G\rtimes M$ be the semidirect product.  Suppose that each sandwich matrix of $M$ is right invertible over the group algebra of the corresponding maximal subgroup over $K$. Then the same is true for $N$ and the global dimension of $KN$ is at most $n$ where $n$ is the length of the longest chain of  principal two-sided ideals of $M$.
 \end{Prop}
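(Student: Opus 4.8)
The plan is to verify the two hypotheses of Theorem~\ref{t:directed} for $N=G\rtimes M$ and to check that the numerical invariant $n$ does not change in passing from $M$ to $N$. Since $G$ is finite, $N$ is a finite monoid, and it is regular by Proposition~\ref{p:basic.semidirect}(1). By Proposition~\ref{p:basic.semidirect}(6) we have $N(g,m)N\subseteq N(h,m')N$ if and only if $MmM\subseteq Mm'M$, so the projection $\pi$ induces an order isomorphism between the poset of $\J$-classes of $N$ and that of $M$; in particular the longest chain of principal two-sided ideals of $N$ again has length $n$. It therefore suffices to show that the characteristic of $K$ divides the order of no maximal subgroup of $N$ and that every sandwich matrix of $N$ is right invertible over the corresponding maximal subgroup algebra over $K$; Theorem~\ref{t:directed} applied to $N$ then yields that the global dimension of $KN$ is at most $n$.

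For the maximal subgroups: as $N$ is regular, every $\J$-class of $N$ is regular, and by Proposition~\ref{p:basic.semidirect}(6) each $\J$-class $J$ of $N$ satisfies $J=\pi^{-1}(J')$ for a single (necessarily regular) $\J$-class $J'$ of $M$, and every regular $\J$-class $J'$ of $M$ arises this way. Fixing $e\in E(M)\cap J'$, the element $(1,e)$ is an idempotent of $N$ lying in $J$ by Proposition~\ref{p:basic.semidirect}(3), and its maximal subgroup in $N$ is $e(G)\rtimes G_e$ by Proposition~\ref{p:basic.semidirect}(4). Since $e$ acts on $G$ by an endomorphism, $e(G)$ is a subgroup of $G$, so $|e(G)\rtimes G_e|=|e(G)|\,|G_e|$ divides $|G|\,|G_e|$, which by hypothesis is not divisible by the characteristic of $K$.

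The heart of the argument is the identification of the sandwich matrix of $J$. Let $P$ be the sandwich matrix of $J'$, a right-invertible $B\times A$ matrix over $KG_e$ built, as in the excerpt, from representatives $r_a\eL e$ of the $\R$-classes and $\ell_b\R e$ of the $\eL$-classes of $J'$, with $P_{ba}=\ell_br_a$ if $\ell_br_a\in G_e$ and $P_{ba}=0$ otherwise. By Proposition~\ref{p:basic.semidirect}(2) and~(5), the $\eL$-classes of $J$ biject with those of $J'$, while each $\R$-class of $J'$ splits into several $\R$-classes of $J$. We will not need a complete list of the latter: right invertibility of a sandwich matrix is independent of the choice of Rees coordinates, so it is enough to observe that $(1,\ell_b)\R(1,e)$ and $(1,r_a)\eL(1,e)$ in $N$ (both from Proposition~\ref{p:basic.semidirect}(2),(5), using $\ell_bM=eM\Rightarrow\ell_b(G)=e(G)$), and to use $(1,\ell_b)$ and $(1,r_a)$ along the corresponding rows and columns. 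A direct computation in the semidirect product gives $(1,\ell_b)(1,r_a)=(\ell_b(1),\ell_br_a)=(1,\ell_br_a)$, which lies in $J$ if and only if $\ell_br_a\in G_e$, i.e.\ if and only if $P_{ba}\neq 0$, in which case it equals $(1,P_{ba})\in e(G)\rtimes G_e$. Hence the $B\times A$ submatrix of the sandwich matrix $\til P$ of $J$ picked out by these columns is exactly the image of $P$ under the unital ring embedding $\iota\colon KG_e\hookrightarrow K[e(G)\rtimes G_e]$ extending $g\mapsto(1,g)$. If $Q$ is a right inverse of $P$ over $KG_e$ (which exists by hypothesis), then $\iota(Q)$ is a right inverse of that submatrix, and extending $\iota(Q)$ by zero rows in the remaining column positions gives a right inverse of $\til P$ over $K[e(G)\rtimes G_e]$. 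Thus every sandwich matrix of $N$ is right invertible, which together with the previous paragraph completes the verification of the hypotheses of Theorem~\ref{t:directed}.

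The only delicate step is the bookkeeping in the third paragraph: one must correctly describe the Rees coordinates of the $\J$-classes of $N$ (the $\R$-classes genuinely proliferate, indexed over each $\R$-class of $M$ by the cosets of $r_a(G)$ in $G$, while the $\eL$-classes do not) and recognize the embedded copy of $P$ sitting inside $\til P$. Once this is in place, the conclusion is essentially automatic, because a matrix possessing a right-invertible submatrix of columns is itself right invertible, so right invertibility of the sandwich matrices is inherited from $M$ with no extra hypotheses on $G$ beyond $\mathrm{char}\,K\nmid|G|$.
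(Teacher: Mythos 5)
Your proposal is correct and follows essentially the same route as the paper: both arguments reduce to Theorem~\ref{t:directed} by observing that the $\mathscr J$-class poset of $N$ is that of $M$, that the maximal subgroups are $e(G)\rtimes G_e$ of order dividing $|G|\,|G_e|$, and that the sandwich matrix of a $\mathscr J$-class of $N$ contains (in the columns indexed by $\R$-classes meeting $\{1\}\times M$) an embedded copy of the right-invertible sandwich matrix of the corresponding $\mathscr J$-class of $M$, whose right inverse extends by zero rows.
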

 \begin{proof}
 Let $\pi\colon N\to M$ be the projection and let us identify $M$ with the submonoid $\{1\}\times M$ of $N$. Proposition~\ref{p:basic.semidirect}(1) shows that $N$ is regular.  Proposition~\ref{p:basic.semidirect}(2) and~(6) imply that each $\eL$-class (respectively, $\J$-class) of $N$ intersects $M$ in an $\eL$-class (respectively, $\J$-class) of $M$ and that $NmN\subseteq Nm'N$ if and only $MmM\subseteq Mm'M$ for $m,m'\in M$.  Hence $n$ is also the length of the longest chain of principal two-sided ideals of $N$. Note that $\R$-classes of $N$ do not have to intersect $M$.

Each $\J$-class $J$ of $N$ contains an idempotent $e$ from $M$ and the maximal subgroup of $N$ at $e$ is $e(G)\rtimes G_e$ where $G_e$ is the maximal subgroup of $M$ at $e$ by Proposition~\ref{p:basic.semidirect}(4).  It also follows from Proposition~\ref{p:basic.semidirect}  (or standard semigroup theory since $M,N$ are regular; cf.~\cite[Exercise~1.10]{repbook}) that $mN=m'N$ (respectively, $Nm=Nm'$) if and only if $mM=m'M$ (respectively, $Mm=Mm'$) for $m,m'\in M$.  Therefore, if an $\R$-class of $N$ intersects $M$, then that intersection is an $\R$-class of $M$.    Let $A$ be an indexing set for the $\R$-classes of $J$ and $B$ an indexing set for the $\eL$-classes.  Let $A'$ be the subset of indices of $\mathscr R$-classes intersecting $M$ and let $A''=A\setminus A'$.   Then we can choose our $\eL$-class representatives $\ell_b\in R_e$, for $b\in B$, to belong to $M$ (since each $\eL$-class intersects $M$) and we can choose our $\R$-class representatives $r_a\in L_e$ with $a\in A'$ to belong to $M$.  By the above discussion, we have that the $\ell_b$, $b\in B$, and the $r_a$, $a\in A'$, form a set of $\eL$-class and $\R$-class representatives of the $\J$-class $J\cap M$ of $M$.  From the construction of the sandwich matrix, we deduce that the sandwich matrix of the $\J$-class $J$ of $N$ has the block form \[P=\begin{bmatrix} P' & P''\end{bmatrix}\] where $P'$ is the $B\times A'$ sandwich matrix of the $\J$-class $J\cap M$ of $M$.  Note that $P'$ has entries in $KG_e$, whereas $P''$ is a $B\times A''$-matrix with entries in $K[e(G)\rtimes G_e]$.  By assumption, there is an $A'\times B$ matrix $Q'$ over  $KG_e$ with $PQ'=I$.  Therefore, the $A\times B$-matrix
\[Q= \begin{bmatrix} Q'\\ 0\end{bmatrix}\] over $KG_e\subseteq K[e(G)\rtimes G_e]$ (where $0$ is the $A''\times B$ zero matrix) is a right inverse to $P$.  The result now follows from Theorem~\ref{t:directed} since $|e(G)\rtimes G_e|$ divides $|G|\cdot |G_e|$.
 \end{proof}

We now apply our theory to compute the global dimension of $K\mathrm{Aff}(n,q)$ over nice fields.

\begin{Thm}\label{t:globdim}
Let $K$ be a field whose characteristic does not divide the order of $\mathrm{AGL}(n,q)$.  Then $K\mathrm{Aff}(n,q)$ has global dimension $n$ and, in fact, the trivial module has projective dimension $n$.
\end{Thm}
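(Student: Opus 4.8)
The plan is to prove separately that $\pd_{K\mathrm{Aff}(n,q)}K\le n$, that the global dimension of $K\mathrm{Aff}(n,q)$ is at most $n$, and that $\pd_{K\mathrm{Aff}(n,q)}K\ge n$; since the projective dimension of any module is bounded by the global dimension, these together give the theorem.

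For the upper bound I would apply Proposition~\ref{p:semidirect.directed} with $M=M_n(\mathbb F_q)$ and $G=\mathbb F_q^n$, using the identification $\mathrm{Aff}(n,q)\cong \mathbb F_q^n\rtimes M_n(\mathbb F_q)$ with $M_n(\mathbb F_q)$ acting linearly on $\mathbb F_q^n$. Here $M_n(\mathbb F_q)$ is finite and regular; its maximal subgroups are the groups $\mathrm{GL}_k(\mathbb F_q)$ with $0\le k\le n$, whose orders divide $|\mathrm{GL}_n(\mathbb F_q)|$, which, like $|\mathbb F_q^n|=q^n$, is prime to $\operatorname{char}K$ by hypothesis; and its $\mathscr J$-classes are linearly ordered by rank, so the longest chain of principal two-sided ideals has length $n$. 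The one nontrivial hypothesis to verify is that every sandwich matrix of $M_n(\mathbb F_q)$ is right invertible over the corresponding group algebra: I would deduce this from the Okni\'nski--Putcha theorem~\cite{putchasemisimple} (made explicit by Kov\'acs~\cite{Kovacs}) that $KM_n(\mathbb F_q)$ is semisimple in this characteristic, together with the standard description of semisimplicity of a finite monoid algebra via invertibility of the sandwich matrices. Proposition~\ref{p:semidirect.directed} then gives that the global dimension of $K\mathrm{Aff}(n,q)$ is at most $n$, and in particular $\pd_{K\mathrm{Aff}(n,q)}K\le n$.

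For the lower bound I would use Corollary~\ref{c:main.fd}. First, the group completion $G(\mathrm{Aff}(n,q))$ is trivial: the minimal ideal of $\mathrm{Aff}(n,q)$ is $\mathbb F_q^n\times\{0\}$, a left-zero semigroup, so its maximal subgroups are trivial and Graham's theorem~\cite{Graham} gives $G(\mathrm{Aff}(n,q))=1$; hence the trivial module $K$ is inflated from the group completion. Since $\mathrm{Aff}(n,q)$ is regular, hence right p.p., and $\operatorname{char}K\nmid|\mathrm{AGL}(n,q)|$, Corollary~\ref{c:main.fd}(2) applied with $V=K$ (so that $V^\ast\otimes_K W\cong W$ as $K\mathrm{AGL}(n,q)$-modules) gives, for every finite dimensional $K\mathrm{AGL}(n,q)$-module $W$,
\[
\Ext^n_{K\mathrm{Aff}(n,q)}(K,W)\cong \Hom_{K\mathrm{AGL}(n,q)}\bigl(\til H_{n-1}(\Delta(\Omega(S)),K),\,W\bigr),
\]
where $S$ is the ideal of non-invertible affine transformations. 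Using Proposition~\ref{p:basic.semidirect}(5) I would identify $\Omega(S)$ with the poset of proper nonempty affine subspaces of $\mathbb F_q^n$: the principal right ideal of $x\mapsto Ax+b$ corresponds to the affine subspace $b+\operatorname{Im}A$, and containment of principal right ideals corresponds to containment of affine subspaces. The order complex of this poset has dimension $n-1$, and by the affine analogue of the Solomon--Tits theorem for $\mathrm{AGL}(n,q)$~\cite{SolomonAffine,SolomonAffineBruhat} it is homotopy equivalent to a wedge of spheres of dimension $n-1$, so $\til H_{n-1}(\Delta(\Omega(S)),K)\ne 0$. Taking $W=\til H_{n-1}(\Delta(\Omega(S)),K)$ then makes the displayed Ext group nonzero, so $\pd_{K\mathrm{Aff}(n,q)}K\ge n$; combined with the upper bounds, this yields $\pd_{K\mathrm{Aff}(n,q)}K=n$ and that the global dimension of $K\mathrm{Aff}(n,q)$ equals $n$.

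The step I expect to be the main obstacle is marshalling the two external inputs and checking that they apply in the present setting: the Okni\'nski--Putcha--Kov\'acs semisimplicity of $KM_n(\mathbb F_q)$, which supplies the right invertibility of the sandwich matrices needed by Proposition~\ref{p:semidirect.directed}, and Solomon's computation of the homology of the affine building, which supplies the nonvanishing of the top reduced homology. The genuinely new work is the reduction to these facts: recognizing $\mathrm{Aff}(n,q)$ as a semidirect product to which Proposition~\ref{p:semidirect.directed} applies, checking that $G(\mathrm{Aff}(n,q))$ is trivial so that $K$ is inflated from the group completion, and identifying $\Omega(S)$ with the poset of proper affine subspaces so that Corollary~\ref{c:main.fd}(2) produces exactly the top reduced homology of the affine building.
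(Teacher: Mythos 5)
Your proposal is correct and follows essentially the same route as the paper: the upper bound via Proposition~\ref{p:semidirect.directed} together with the Okni\'nski--Putcha--Kov\'acs semisimplicity of $KM_n(\mathbb F_q)$, and the lower bound via Corollary~\ref{c:main.fd}(2) after identifying $\Omega(S)$ with the poset of proper nonempty affine subspaces and invoking the Solomon/Folkman wedge-of-spheres description, taking $W=\til H_{n-1}(\Delta(\Omega(S)),K)$. The only inessential difference is that your verification that $G(\mathrm{Aff}(n,q))$ is trivial is not needed, since the trivial module is always inflated from the group completion.
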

\begin{proof}
We shall identify  $\mathrm{Aff}(n,q)$ with $\mathbb F_q^n\rtimes M_n(\mathbb F_q)$.  Note that $M_n(\mathbb F_q)$ is regular being a semisimple ring.
By a theorem of Kov\'acs~\cite{Kovacs} (generalizing a characteristic $0$ result of Okni\'nski and Putcha~\cite{putchasemisimple}) $KM_n(\mathbb F_q)$ is semisimple (see also~\cite[Chapter~5]{repbook} or, for an alternate proof,~\cite{determinantsgp}).  The $\mathscr J$-classes of $M_n(\mathbb F_q)$ are the $J_r$ with $0\leq r\leq n$, where $J_r$ consists of the rank $r$ matrices, and the corresponding principal ideals form a chain.  The maximal subgroup of $J_r$ is isomorphic to $\mathrm{GL}_r(\mathbb F_q)$ (take as an idempotent in $J_r$ the diagonal matrix $e_r$ with $r$ ones followed by $n-r$ zeroes).  Each sandwich matrix is invertible over the maximal subgroup due to the semisimplicity of $KM_n(\mathbb F_q)$, cf.~\cite[Theorem~5.21]{repbook}.  We deduce from Proposition~\ref{p:basic.semidirect} that $\mathrm{Aff}(n,q)$ is regular and the maximal subgroups are of the form $\mathrm{AGL}(r,q)$ with $0\leq r\leq n$ (since $e_r\mathbb F_q^n\cong \mathbb F_q^r$), and hence do not have order divisible by the characteristic of $K$.    From Proposition~\ref{p:semidirect.directed} we conclude that the global dimension of $K\mathrm{Aff}(n,q)$ is at most $n$.

We now show that the trivial module $K$ has projective dimension $n$.  Put $M=\mathrm{Aff}(n,q)$, $G=\mathrm{AGL}(n,q)$ and $S=M\setminus G$.  From our global dimension upper bound, we know that $K$ has projective dimension at most $n$.  So it suffices to show that $\Ext^n_{KM}(K,V)\neq 0$ for some $KM$-module $V$.  If $T(x)=Ax+b$ and $T'(x)=A'x+b'$ with $A,A'\in M_n(\mathbb F_q)$, then $TM\subseteq T'M$ if and only if $AM_n(\mathbb F_q)\subseteq A'M_n(\mathbb F_q)$ and $T(\mathbb F_q^n) \subseteq  T'(\mathbb F_q^n)$ by Proposition~\ref{p:basic.semidirect}(5).  But it is well known and easy to check that $AM_n(\mathbb F_q)\subseteq A'M_n(\mathbb F_q)$ if and only if the image of $A$ is contained in the image of $A'$.  It thus follows that $TM\subseteq T'M$ if and only if $T(\mathbb F_q^n)\subseteq T(\mathbb F_q^n)$ since $A\mathbb F_q^n+b\subseteq A'\mathbb F_q^n+b'$ implies that $A\mathbb F_q^n\subseteq A'\mathbb F_q^n$.  One can also deduce this from~\cite{indep.algebra}, which describes Green's relations on the endomorphism monoid of an independence algebra (of which this is an example). The images of affine transformations are precisely the nonempty affine subspaces of $\mathbb F_q^n$. We conclude that $\Omega(S)$ is isomorphic to the poset of nonempty proper affine subspaces of $\mathbb F_q^n$, which is the  proper part of the geometric lattice of affine subspaces of $\mathbb F_q^n$ ordered by inclusion (this is the lattice of flats of the matroid with point set $\mathbb F_q^n$ and with independent sets the affinely independent sets).   The well-known Folkman theorem says that the order complex of the proper part of a geometric lattice is homotopy equivalent to a wedge of spheres of dimension the length of the longest chain in the proper part (i.e., the dimension of the order complex); in fact, the order complex is shellable, cf.~\cite{Bjornershellable}.  In our setting, the longest chain has length $n-1$, and so $\Delta(\Omega(S))$ has the homotopy type of a wedge of $(n-1)$-spheres.  The number of spheres can be determined from the M\"obius function of the geometric lattice, and, for this particular geometric lattice, the order complex is known to be homotopy equivalent to a wedge of $\prod_{i=1}^n(q^i-1)$-many $(n-1)$-spheres (see~\cite{SolomonAffine},~\cite{Moller} and in the case $q$ is a prime~\cite[Section~8.2]{Browncoset}).  Let $V$ be the $KG$-module $\til H_{n-1}(\Delta(\Omega(S)),K)$.   Then $\dim V = \prod_{i=1}^n(q^i-1)\neq 0$, and so $V$ is a nonzero $KG$-module, which we can then inflate to $M$.  We then have $\Ext^n_{KM}(K,V)\cong \Hom_{KG}(\til H_{n-1}(\Delta(\Omega(S)),K),V)=\End_{KG}(V)\neq 0$ by Corollary~\ref{c:main.fd}.  This completes the proof.
\end{proof}

\begin{Rmk}\label{r:solomon}
In fact, $\til H_{n-1}(\Delta(\Omega(S)),\mathbb C)$ is an irreducible representation of $\mathbb C\mathrm{AGL}(n,q)$, as was first observed by Solomon~\cite{SolomonAffine} (a complete proof can be extracted from~\cite{SolomonAffineBruhat,Siegel}).  The corresponding simple $\mathbb C\mathrm{Aff}(n,q)$-module then has injective dimension $n$.
\end{Rmk}

Let us now compute $\Ext$ from the trivial $\mathbb C\mathrm{Aff}(n,q)$-module to an arbitrary simple module.   The first part of the following result is well known and can be extracted from~\cite{RhodesZalc} or~\cite[Chapter~5]{repbook};  it is also observed in~\cite[Section~4]{rrbg} (but with the conventions chosen for right modules, so we must dualize).  The second part is a combination of~\cite[Theorem~3.7]{rrbg} and~\cite[Corollary~4.5]{rrbg}, dualized to handle left modules.

\begin{Prop}\label{p:MS2}
Let $M$ be a finite regular monoid and $K$ a field whose characteristic does not divide the order of any maximal subgroup of $M$.  Fix for each $\mathscr J$-class an idempotent $e_J$.  Then if the sandwich matrix of each regular $\mathscr J$-class $J$ of $M$ is right invertible over $KG_{e_J}$,  the simple $KM$-modules are (up to isomorphism) the modules $\Coind_{e_J}(V)$ with $V$ a simple $KG_{e_J}$-module for some $\mathscr J$-class $J$. Moreover, under these hypotheses,  if $V, V'$ are simple $KM$-modules with apexes $J,J'$, respectively, and it is not the case that $J<J'$, then $\Ext^n_{KM}(V,W)=0$.
\end{Prop}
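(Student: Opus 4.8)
The first assertion is the classical parametrization of the simple modules of a finite monoid algebra specialized to the case where every sandwich matrix is right invertible; I would deduce it exactly as indicated in the paper, from~\cite{RhodesZalc} or~\cite[Chapter~5]{repbook}, the key points being that under right invertibility the simple module with apex $J$ attached to a simple $KG_{e_J}$-module $V$ is precisely $\Coind_{e_J}(V)$ and that $e_J\Coind_{e_J}(V)\cong V$, so that distinct pairs $(J,V)$ give non-isomorphic simples. The substance is the ``Moreover'' clause, where I will prove $\Ext^n_{KM}(V,V')=0$ for all $n\geq 1$ whenever the apexes $J$ of $V$ and $J'$ of $V'$ do not satisfy $J<J'$ (the restriction to $n\geq 1$ is necessary, since $\Ext^0_{KM}(V,V')=\Hom_{KM}(V,V')\neq 0$ when $V\cong V'$).

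The plan is to carry out the same syzygy-and-apex bookkeeping that drives the proof of Theorem~\ref{t:directed}. Write $e=e_J$. By~\cite[Theorem~4.4]{rrbg} the projective cover of $V$ is $P_0=\Ind_e(eV)$, so the first syzygy is $\Omega V=\rad(\Ind_e(eV))$, and by~\cite[Theorem~5.5]{repbook} every composition factor of $\rad(\Ind_e(eV))$ has apex strictly above $J$. I would then prove by induction on $i\geq 1$ that every composition factor of the $i$-th syzygy $\Omega^iV$ has apex strictly above $J$. For the inductive step: the top of $\Omega^iV$ consists of simples of apex strictly above $J$, so its projective cover $P_i$ is a direct sum of indecomposable projectives of the form $\Ind_{e_{J''}}(e_{J''}L)$ with $L$ simple of apex $J''>J$; each such summand has top $L$ of apex $>J$ and, again by~\cite[Theorem~5.5]{repbook}, radical with all composition factors of apex $>J''>J$; since $\Omega^{i+1}V$ embeds in $P_i$, every composition factor of $\Omega^{i+1}V$ has apex strictly above $J$. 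In particular, for every $i\geq 1$ the term $P_i$ of the minimal projective resolution of $V$ is a direct sum of indecomposable projectives whose simple tops have apex strictly above $J$.

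Granting this, the conclusion is immediate. Compute $\Ext^n_{KM}(V,V')$ as the cohomology of $\Hom_{KM}(P_\bullet,V')$. An indecomposable projective $P(L)$ has simple head $L$, hence a unique maximal submodule $\rad P(L)$, so $L$ is its only simple quotient; thus $\Hom_{KM}(P(L),V')\neq 0$ forces $V'\cong L$. Applied to the summands of $P_i$ for $i\geq 1$: if $\Hom_{KM}(P_i,V')\neq 0$ then $V'$ is isomorphic to a simple of apex strictly above $J$, i.e., $J<J'$. So, when $J\not<J'$, we get $\Hom_{KM}(P_i,V')=0$ for all $i\geq 1$, the cochain complex $\Hom_{KM}(P_\bullet,V')$ is concentrated in degree $0$, and therefore $\Ext^n_{KM}(V,V')=0$ for all $n\geq 1$.

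The step I expect to require the most care is the apex bookkeeping in the inductive claim — keeping the inequality \emph{strict} at every stage, which is precisely the role of~\cite[Theorem~5.5]{repbook}; everything else (identifying projective covers via $\Ind$, the local structure of indecomposable projectives, and reading $\Ext$ off the minimal resolution) is standard finite-dimensional algebra. If one prefers to avoid the minimal resolution altogether, the same bookkeeping can be organized as a dimension-shifting induction on $n$ via the short exact sequences $0\to\rad(\Ind_e(eV))\to\Ind_e(eV)\to V\to 0$, exactly as in the proof of Theorem~\ref{t:directed}.
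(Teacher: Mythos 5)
Your argument is correct. Note, though, that the paper offers no internal proof of Proposition~\ref{p:MS2}: it simply attributes the first assertion to~\cite{RhodesZalc} and~\cite[Chapter~5]{repbook} and the ``Moreover'' clause to~\cite[Theorem~3.7]{rrbg} and~\cite[Corollary~4.5]{rrbg} (dualized). What you have done is supply a self-contained derivation of the $\Ext$-vanishing by re-running the syzygy-and-apex bookkeeping that the paper itself uses to prove Theorem~\ref{t:directed}: the projective cover of a simple with apex $J$ is $\Ind_{e_J}(e_JV)$ by~\cite[Theorem~4.4]{rrbg}, the radical of such an induced projective has all composition factors of strictly larger apex by~\cite[Theorem~5.5]{repbook}, and an easy induction then shows every term $P_i$, $i\geq 1$, of the minimal projective resolution has all its indecomposable summands with simple tops of apex strictly above $J$, so $\Hom_{KM}(P_i,V')=0$ unless $J<J'$. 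This buys a proof that lives entirely inside the quasi-hereditary/directedness framework already set up in Section~\ref{s:affine}, at the cost of redoing what the cited corollary of~\cite{rrbg} already records. Two small points you handle correctly and which are worth making explicit: the statement's $W$ should read $V'$, and the vanishing can only be asserted for $n\geq 1$, since $\Ext^0_{KM}(V,V)=\End_{KM}(V)\neq 0$ while $J\not<J$.
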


Denote by $J_r$ the $\mathscr J$-class of rank $r$ transformations in $\mathrm{Aff}(n,q)$ (i.e., transformations with image an affine subspace of dimension $r$).  We can take as our idempotent representative of $J_r$ the idempotent matrix $e_r$ that fixes the first $r$ standard basis vectors and annihilates the remaining $n-r$ standard basis vectors.  Then $G_{e_r}$ can be naturally identified with $\mathrm{AGL}(r,q)$.  Since the sandwich matrix of $J_r$ is right invertible over $\mathbb C\mathrm{AGL}(r,q)$ by Proposition~\ref{p:semidirect.directed} and the semisimplicity of $\mathbb CM_n(\mathbb F_q)$~\cite{putchasemisimple,Kovacs,repbook}, we deduce from Proposition~\ref{p:MS2} that the simple $\mathbb C\mathrm{Aff}(n,q)$-modules are the modules $\Coind_{e_r}(V)$ with $V$ a simple $\mathbb C\mathrm{AGL}(r,q)$-module for $0\leq r\leq n$.  We now compute the cohomology of any such simple module.

\begin{Cor}
Let $q$ be a prime power and $n\geq 1$.  If $0\leq r\leq n$ and $V$ is a simple $\mathbb C\mathrm{AGL}(r,q)$-module, then $\mathrm{Ext}^m_{\mathbb C\mathrm{Aff}(n,q)}(\mathbb C,\Coind_{e_r}(V))=0$ unless $m=r$ and $V$ is isomorphic to the $(r-1)$-reduced homology module for the action of $\mathrm{AGL}(r,q)$ on the order complex of the poset of proper affine subspaces of $\mathbb F_q^r$, in which case this $\Ext$-space is $\mathbb C$.
\end{Cor}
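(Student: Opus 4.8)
The plan is to apply Corollary~\ref{c:main.fd.coind}(4) to $M=\mathrm{Aff}(n,q)$, $K=\mathbb C$, the idempotent $e=e_r$ and its maximal subgroup $G_{e_r}\cong\mathrm{AGL}(r,q)$. First I would record the standing hypotheses: $M$ is finite and regular by Proposition~\ref{p:basic.semidirect}(1), so in particular right p.p.; the characteristic of $\mathbb C$ divides $|\mathrm{AGL}(r,q)|$ for no $r$; and the group completion $G(\mathrm{Aff}(n,q))$ is trivial, since if $0$ denotes the affine transformation collapsing $\mathbb F_q^n$ to the origin then $0\cdot f=0$ for every $f\in M$, forcing $f$ to map to $1$ in $G(M)$. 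Hence the trivial $\mathbb CM$-module $\mathbb C$ is the inflation of the trivial $\mathbb CG(M)$-module, it is finite dimensional, and $\mathbb C^{\ast}\cong\mathbb C$. Taking the $\mathbb CG(M)$-module in Corollary~\ref{c:main.fd.coind}(4) to be $\mathbb C$ and the $\mathbb CG_{e_r}$-module to be $V$, and using $\mathbb C^{\ast}\otimes_{\mathbb C}V\cong V$, this gives, for all $m\geq 0$,
\[\Ext^m_{\mathbb C\mathrm{Aff}(n,q)}(\mathbb C,\Coind_{e_r}(V))\cong\Hom_{\mathbb C\mathrm{AGL}(r,q)}\bigl(\til H_{m-1}(\Delta(\Omega(R(e_r))),\mathbb C),\,V\bigr).\]

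The next step is to identify the $\mathrm{AGL}(r,q)$-poset $\Omega(R(e_r))$. As in the proof of Theorem~\ref{t:globdim}, Proposition~\ref{p:basic.semidirect}(5) shows that $TM\subseteq T'M$ in $\mathrm{Aff}(n,q)$ if and only if the image of $T$ is contained in the image of $T'$, the images being exactly the nonempty affine subspaces of $\mathbb F_q^n$. The image of $e_r$ is the $r$-dimensional subspace $e_r\mathbb F_q^n\cong\mathbb F_q^r$, so $e_rM$ consists of the transformations with image inside $e_r\mathbb F_q^n$ and $R_{e_r}$ of those with image equal to $e_r\mathbb F_q^n$; therefore $R(e_r)=e_rM\setminus R_{e_r}$ consists of the transformations whose image is a nonempty proper affine subspace of $e_r\mathbb F_q^n$, and $TM\mapsto T(\mathbb F_q^n)$ identifies $\Omega(R(e_r))$, equivariantly for the left $G_{e_r}\cong\mathrm{AGL}(r,q)$-action, with the poset of nonempty proper affine subspaces of $\mathbb F_q^r$ with its natural $\mathrm{AGL}(r,q)$-action. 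This is precisely the poset $\Omega(S)$ with $\mathrm{AGL}(r,q)$-action occurring for the monoid $\mathrm{Aff}(r,q)$ in Theorem~\ref{t:globdim} and Remark~\ref{r:solomon}. (When $r=0$ the poset is empty and one uses the convention $\til H_{-1}(\Delta(\emptyset),\mathbb C)=\mathbb C$.) Invoking Folkman's theorem exactly as in the proof of Theorem~\ref{t:globdim}, this poset is the proper part of the geometric lattice of affine subspaces of $\mathbb F_q^r$, whose proper part has longest chain of length $r-1$, so $\Delta(\Omega(R(e_r)))$ is homotopy equivalent to a wedge of $(r-1)$-spheres and $\til H_j(\Delta(\Omega(R(e_r))),\mathbb C)=0$ for $j\neq r-1$. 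For $m\neq r$ the displayed right-hand side therefore vanishes.

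For $m=r$ we obtain $\Ext^r_{\mathbb C\mathrm{Aff}(n,q)}(\mathbb C,\Coind_{e_r}(V))\cong\Hom_{\mathbb C\mathrm{AGL}(r,q)}(U_r,V)$, where $U_r=\til H_{r-1}(\Delta(\Omega(R(e_r))),\mathbb C)$ is the module named in the statement. By Remark~\ref{r:solomon} (Solomon's theorem, applied with $n$ replaced by $r$), $U_r$ is an irreducible $\mathbb C\mathrm{AGL}(r,q)$-module; in the degenerate case $r=0$ it is the one-dimensional trivial module, which is the unique simple $\mathbb C\mathrm{AGL}(0,q)$-module. Since $\mathbb C$ is algebraically closed, Schur's lemma then gives $\Hom_{\mathbb C\mathrm{AGL}(r,q)}(U_r,V)=0$ unless $V\cong U_r$, in which case it equals $\End_{\mathbb C\mathrm{AGL}(r,q)}(U_r)\cong\mathbb C$, which is the desired conclusion.

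The one place that requires genuine care — and the main obstacle — is the middle step: correctly computing $R(e_r)$ and recognizing that the resulting $\mathrm{AGL}(r,q)$-poset coincides with the one for which Solomon's irreducibility statement (Remark~\ref{r:solomon}) is available, so that the terminal $\Hom$-space can be evaluated by Schur's lemma. Everything else is a direct application of results already established in the excerpt.
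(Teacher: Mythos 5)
Your proposal is correct and follows essentially the same route as the paper: identify $\Omega(R(e_r))$ with the poset of proper nonempty affine subspaces of $\mathbb F_q^r$, apply the collapse of the spectral sequence in good characteristic (the paper cites Theorem~\ref{t:cohomology.coinduced}(4) directly, which is what your use of Corollary~\ref{c:main.fd.coind}(4) with $V=\mathbb C$ trivial reduces to), and finish with Folkman's theorem, Solomon's irreducibility result from Remark~\ref{r:solomon}, and Schur's lemma. Your explicit treatment of the $r=0$ case and of the triviality of $G(\mathrm{Aff}(n,q))$ is harmless extra detail (the trivial module is always inflated from the group completion).
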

\begin{proof}
Note that the poset $\Omega(e_r\mathrm{Aff}(n,q))$ can be identified with the poset  of nonempty affine subspaces of $\mathbb F_q^r$ (viewed as $e_r\mathbb F_q^n$) by the discussion in the proof of Theorem~\ref{t:globdim}, and the $G_{e_r}$-action is the usual action of $\mathrm{AGL}(r,q)$ on this poset.  The poset $\Omega(R(e_r))$ can then be identified with the poset of proper nonempty affine subspaces of $\mathbb F_q^r$.  Theorem~\ref{t:cohomology.coinduced} shows that
\begin{align*}
\Ext_{\mathbb C\mathrm{Aff}(n,q)}^m(\mathbb C,\Coind_{e_r}(V))&=H^m(\mathrm{Aff}(n,q),\Coind_{e_r}(V))\\ &\cong \Hom_{\mathbb CG_{e_r}}(\til H_{m-1}(\Delta(R(e_r)),\mathbb C),V)
\end{align*}
 and the result follows from Solomon's theorem~\cite{SolomonAffine}, discussed in Remark~\ref{r:solomon}, and Schur's lemma (as $\Delta(\Omega(R(e_r)))$ is homotopy equivalent to a wedge of $(r-1)$-spheres).
\end{proof}

The proof of Theorem~\ref{t:globdim}, together with Proposition~\ref{p:MS2}, yields the following vanishing result for $\Ext$.

\begin{Cor}
Let $q$ be a prime power and $n\geq 1$.  If $K$ is a field whose characteristic does not divide $|\mathrm{AGL}(n,q)|$, then $\Ext^r_{K\mathrm{Aff}(n,q)}(V,V')=0$ whenever $V,V'$ are simple $K\mathrm{Aff}(n,q)$-modules with respective apexes $J,J'$ that do not satisfy $J<J'$.
\end{Cor}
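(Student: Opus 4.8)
The plan is simply to verify that $M=\mathrm{Aff}(n,q)$ meets all the hypotheses of Proposition~\ref{p:MS2} and then to quote its second assertion. Identifying $\mathrm{Aff}(n,q)$ with $\mathbb F_q^n\rtimes M_n(\mathbb F_q)$ as in the proof of Theorem~\ref{t:globdim}, I would first recall that $M_n(\mathbb F_q)$ is regular and that $KM_n(\mathbb F_q)$ is semisimple by the theorem of Kov\'acs~\cite{Kovacs} (extending~\cite{putchasemisimple}), so every sandwich matrix of $M_n(\mathbb F_q)$ is invertible over the group algebra of the corresponding maximal subgroup $\mathrm{GL}_r(\mathbb F_q)$ by~\cite[Theorem~5.21]{repbook}. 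Since $q^n$ and each $|\mathrm{GL}_r(\mathbb F_q)|$ divide $|\mathrm{AGL}(n,q)|$, the characteristic of $K$ divides neither $|\mathbb F_q^n|$ nor the order of any maximal subgroup of $M_n(\mathbb F_q)$, so Proposition~\ref{p:semidirect.directed} applies with base monoid $M_n(\mathbb F_q)$ and group $\mathbb F_q^n$.

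From Proposition~\ref{p:semidirect.directed} (equivalently, from the arguments already assembled in the proof of Theorem~\ref{t:globdim}) I would extract the three facts needed: $M=\mathrm{Aff}(n,q)$ is a finite regular monoid; its maximal subgroups are the $\mathrm{AGL}(r,q)$ with $0\le r\le n$, none of whose orders is divisible by the characteristic of $K$ (each divides $|\mathrm{AGL}(n,q)|$); and every sandwich matrix of $M$ is right invertible over the group algebra of the corresponding maximal subgroup over $K$. These are precisely the standing hypotheses of Proposition~\ref{p:MS2}.

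With those hypotheses in force, the second assertion of Proposition~\ref{p:MS2} gives at once that $\Ext^r_{KM}(V,V')=0$ for all $r$ whenever $V,V'$ are simple $KM$-modules with respective apexes $J,J'$ that do not satisfy $J<J'$, which is the claim. There is no real obstacle here: all the substantive work --- regularity and the computation of the maximal subgroups of the semidirect product, the transfer of sandwich-matrix invertibility, and the quasi-hereditary/projective-cover input behind Proposition~\ref{p:MS2} --- has already been done; the only point requiring a moment's care is the elementary observation that the hypothesis on the characteristic of $K$ relative to $|\mathrm{AGL}(n,q)|$ passes down to every maximal subgroup of $\mathrm{Aff}(n,q)$.
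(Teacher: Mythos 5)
Your proposal is correct and follows exactly the route the paper intends: the proof of Theorem~\ref{t:globdim} (via Proposition~\ref{p:semidirect.directed} and the Kov\'acs/Okni\'nski--Putcha semisimplicity of $KM_n(\mathbb F_q)$) supplies regularity, the identification of the maximal subgroups as $\mathrm{AGL}(r,q)$, and right invertibility of the sandwich matrices, after which the second assertion of Proposition~\ref{p:MS2} gives the vanishing. Your added check that the characteristic hypothesis passes to each $\mathrm{AGL}(r,q)$ is the only detail the paper leaves implicit, and you handle it correctly.
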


Finally, using Corollary~\ref{c:homological.epi}, we can prove one last result about $\mathrm{Ext}$ between simple $K\mathrm{Aff}(n,q)$-modules.

\begin{Cor}\label{c:ext.inflate.mn}
Let $q=p^m$ with $p$ prime, $m\geq 1$, and let $K$ be a field of characteristic different than $p$.   Then for any pair $V,W$ of $KM_n(\mathbb F_q)$-modules, inflated to $\mathrm{Aff}(n,q)$-modules via the projection $\mathrm{Aff}(n,q)\to M_n(\mathbb F_q)$, we have that $\Ext^n_{K\mathrm{Aff}(n,q)}(V,W)\cong \Ext^n_{KM_n(\mathbb F_q)}(V,W)$.  In particular, if the characteristic of $K$ does not divide $|\mathrm{GL}_n(\mathbb F_q)|$, then $\Ext^n_{K\mathrm{Aff}(n,q)}(V,W)=0$ for all $n\geq 1$.
\end{Cor}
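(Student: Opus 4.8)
The plan is to deduce Corollary~\ref{c:ext.inflate.mn} directly from Corollary~\ref{c:homological.epi} together with the semisimplicity of $KM_n(\mathbb F_q)$ in good characteristic; no new idea is required. First I would observe that $\mathrm{Aff}(n,q)\cong \mathbb F_q^n\rtimes M_n(\mathbb F_q)$, where the finite monoid $M_n(\mathbb F_q)$ acts on the finite abelian group $(\mathbb F_q^n,+)$ by group endomorphisms (each $A\in M_n(\mathbb F_q)$ is $\mathbb F_q$-linear, hence in particular additive and fixing $0$, and the identity matrix acts as the identity), and that the projection $\mathrm{Aff}(n,q)\to M_n(\mathbb F_q)$, $(x\mapsto Ax+b)\mapsto A$, is exactly the semidirect product projection $\pi\colon G\rtimes M\to M$ appearing in Corollary~\ref{c:homological.epi} with $M=M_n(\mathbb F_q)$ and $G=\mathbb F_q^n$. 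Since the characteristic of $K$ is different from $p$, it does not divide $|\mathbb F_q^n|=q^n=p^{mn}$, so the hypothesis of Corollary~\ref{c:homological.epi} is met. That corollary then gives that $\ov\pi\colon K\mathrm{Aff}(n,q)\to KM_n(\mathbb F_q)$ is a homological epimorphism and, consequently, that $\Ext^{j}_{K\mathrm{Aff}(n,q)}(V,W)\cong \Ext^{j}_{KM_n(\mathbb F_q)}(V,W)$ in every cohomological degree $j\ge 0$, for any $KM_n(\mathbb F_q)$-modules $V,W$ inflated along $\ov\pi$ (I write $j$ for the degree to avoid clashing with the dimension $n$). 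This is the first assertion of the corollary.

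For the ``in particular'' clause I would show that $KM_n(\mathbb F_q)$ is semisimple under the stronger hypothesis that the characteristic of $K$ does not divide $|\mathrm{GL}_n(\mathbb F_q)|$. The maximal subgroups of $M_n(\mathbb F_q)$ are the groups $\mathrm{GL}_r(\mathbb F_q)$ for $0\le r\le n$ (the group of units of $e_rM_n(\mathbb F_q)e_r$, with $e_r$ the rank-$r$ idempotent fixing the first $r$ standard basis vectors), and each embeds into $\mathrm{GL}_n(\mathbb F_q)$ as block-diagonal matrices; hence $|\mathrm{GL}_r(\mathbb F_q)|\mid |\mathrm{GL}_n(\mathbb F_q)|$ by Lagrange's theorem and is therefore coprime to the characteristic of $K$. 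By the theorem of Kov\'acs~\cite{Kovacs} (recalled in the proof of Theorem~\ref{t:globdim}; see also~\cite{putchasemisimple,repbook}), $KM_n(\mathbb F_q)$ is then semisimple, so $\Ext^{j}_{KM_n(\mathbb F_q)}(V,W)=0$ for all $j\ge 1$. Combining this with the isomorphism of the previous paragraph yields $\Ext^{j}_{K\mathrm{Aff}(n,q)}(V,W)=0$ for all $j\ge 1$, completing the proof.

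I do not expect any genuine obstacle here: the entire substance is packaged into Corollary~\ref{c:homological.epi} (itself a specialization of Theorem~\ref{t:semidirect}) and into the cited semisimplicity theorem. The only points needing care are the routine verification that matrix multiplication is an action by endomorphisms of $\mathbb F_q^n$, the identification of $\mathrm{Aff}(n,q)$ with the relevant semidirect product so that the module inflation in the statement matches the one in Corollary~\ref{c:homological.epi}, and the elementary divisibility observation on the orders of the maximal subgroups. If one preferred to avoid quoting Kov\'acs's theorem, one could instead invoke the semisimplicity criterion of~\cite[Theorem~5.21]{repbook}, checking that $M_n(\mathbb F_q)$ is regular with invertible sandwich matrices in good characteristic, but this is the same input in a different guise.
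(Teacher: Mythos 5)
Your proposal is correct and follows essentially the same route as the paper: the first assertion is exactly the specialization of Corollary~\ref{c:homological.epi} to the semidirect product decomposition $\mathrm{Aff}(n,q)\cong \mathbb F_q^n\rtimes M_n(\mathbb F_q)$ (using $\mathrm{char}\,K\neq p$ so that it does not divide $q^n$), and the vanishing statement is Kov\'acs's semisimplicity theorem. The extra details you supply (checking the action is by endomorphisms, the divisibility of the maximal subgroup orders) are harmless elaborations of what the paper leaves implicit.
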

\begin{proof}
The first statement follows from Corollary~\ref{c:homological.epi}.  The final statement follows because $KM_n(\mathbb F_q)$ is semisimple under these hypotheses by Kov\'acs's theorem~\cite{Kovacs}.
\end{proof}

\section{Projective resolutions from topology}
In this section, we generalize results from~\cite{ourmemoirs} to construct, via topology, an explicit projective resolution of the trivial module over fields of good characteristic for algebras of regular monoids for which each sandwich matrix is right invertible. More generally, we give a resolution by modules with a standard filtration for the natural quasi-hereditary structure on the monoid algebra.  This allows us to give an alternate proof of Corollary~\ref{c:main.fd.coind}(4) for finite regular monoids using resolutions coming from topology that are acyclic with respect to coinduced modules.

Let us say that a $KM$-module is an \emph{induced module} if it is isomorphic to one of the form $\Ind_e(V)$ with $V$ a $KG_e$-module.
First we need a lemma to recognize induced modules. Recall that if $e\in E(M)$, then $L(e)=Me\setminus L_e$ is a left ideal.  If $A\subseteq M$ is a subset and $V$ is a $KM$-module, then we denote by $AV$ the $K$-span of the elements $av$ with $a\in A$ and $v\in V$.

\begin{Lemma}\label{l:induced}
Let $M$ be a monoid and $K$ a commutative ring with unit.  Let $V$ be a $KM$-module and $e\in E(M)$.  Then $V\cong \Ind_e(eV)$ if and only if:
\begin{enumerate}
\item $MeV=V$;
\item $L(e)V=0$;
\item $tG_eeV\cap \sum_{tG_e\neq t'G_e\in L_e/G_e} t'G_eeV=0$ for all $t\in L_e$.
\end{enumerate}
\end{Lemma}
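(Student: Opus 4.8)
The plan is to build the comparison map explicitly and analyse it through the $G_e$-orbit structure of $L_e$. Recall $\Ind_e(eV)=KMe/KL(e)\otimes_{KG_e}eV$; I would first introduce the $KM$-linear evaluation map $\eta\colon\Ind_e(eV)\to V$, $[\ell]\otimes v\mapsto\ell v$. It arises from the obvious $KM$-linear, $KG_e$-balanced map $KMe\otimes_{KG_e}eV\to V$, $me\otimes v\mapsto mv$, and it descends modulo $KL(e)\otimes_{KG_e}eV$ exactly when $L(e)eV=0$, i.e.\ under hypothesis (2). Next, since $L_e$ is a free right $G_e$-set, a choice of transversal $T$ of $L_e/G_e$ gives $\Ind_e(eV)=\bigoplus_{t\in T}(t\otimes eV)$ with $t\otimes eV\cong eV$ as $K$-modules; the restriction of $\eta$ to $t\otimes eV$ is $v\mapsto tv$, which is injective because $t\eL e$ furnishes $u\in M$ with $ut=e$, whence $u(tv)=ev=v$, and whose image is $t\cdot eV$. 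Since $e\in G_e$ and $G_e$ acts invertibly on $eV$, one has $t\cdot eV=tG_e\cdot eV$, so the summand $t\otimes eV$ maps isomorphically onto $tG_e\cdot eV\subseteq V$.

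Granting this, the implication (1)+(2)+(3)$\Rightarrow V\cong\Ind_e(eV)$ is immediate: (2) makes $\eta$ well defined; writing $Me=L_e\sqcup L(e)$ and using $L(e)eV\subseteq L(e)V=0$ from (2) gives $\mathrm{Im}\,\eta=L_e\cdot eV=MeV$, which is $V$ by (1), so $\eta$ is onto; and (3) says precisely that the submodules $tG_e\cdot eV$ ($t\in T$) of $V$ form an internal direct sum, which together with the summand-wise injectivity of $\eta$ forces $\eta$ to be a bijection.

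For the converse, fix an isomorphism $\theta\colon\Ind_e(eV)\xrightarrow{\ \sim\ }V$. Condition (1) is automatic for an induced module: since $\ell e=\ell$ for $\ell\in L_e$, one has $\ell\otimes v=\ell\cdot(e\otimes v)$, so $\Ind_e(eV)$ is generated as a $KM$-module by $e\otimes eV\subseteq e\Ind_e(eV)$, giving $\Ind_e(eV)=Me\cdot e\Ind_e(eV)$ and hence, transporting along $\theta$, $V=MeV$. For (2), transporting reduces the claim to $L(e)\Ind_e(eV)=0$, i.e.\ to $L(e)$ acting as zero on the left $KM$-module $KMe/KL(e)$; as $s\cdot[\ell]=[s\ell]$ for $s\in L(e)$, $\ell\in L_e$, this is the assertion that $s\ell\in L(e)$, which holds because $J_{s\ell}\le J_s\le J_e$ while $s\ell\in L_e$ would force $J_{s\ell}=J_e=J_s$ and then the structure of Green's relations (automatic, for instance, when $M$ is finite) would give $s\eL e$, contradicting $s\in Me\setminus L_e$. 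The same input shows that for $t\in T$ the element $et$ lies in $eMe\cap L_e=G_e$ when it lies in $L_e$ and in $L(e)$ otherwise, so $e\cdot(t\otimes v)\in e\otimes eV$ in every case; hence $e\Ind_e(eV)=e\otimes eV$, $t\cdot e\Ind_e(eV)=t\otimes eV$, and transporting along $\theta$ makes the $tG_e\cdot eV=t\cdot eV=\theta(t\otimes eV)$ inherit the direct-sum decomposition $\Ind_e(eV)=\bigoplus_{t\in T}(t\otimes eV)$, which is exactly (3).

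The step I expect to be the main obstacle is the proof of (2) in the converse: everything else is a formal manipulation of tensor products and of the orbit decomposition, but showing that left multiplication by an element of $Me\setminus L_e$ cannot carry an element of the $\eL$-class of $e$ back into it is a genuine use of the structure of the $\mathscr J$-classes of $M$, and the companion identification $e\Ind_e(eV)=e\otimes eV$ needed for (3) rests on the same ingredient.
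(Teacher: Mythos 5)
Your ``if'' direction is essentially the paper's argument: the same evaluation map $\Ind_e(eV)\to V$, $a\otimes v\mapsto av$ (the paper's $\gamma$), well defined because (2) kills $KL(e)\otimes_{KG_e}eV$, the same decomposition $\Ind_e(eV)=\bigoplus_{t\in T}t\otimes eV$ coming from freeness of $L_e$ as a right $G_e$-set, surjectivity from (1) together with $L(e)eV=0$, and the same injectivity device (an element $u$ with $ut=e$, the paper's $m_t$) combined with the directness statement (3). That half is complete and valid for an arbitrary monoid, exactly as in the paper.

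The converse is where you diverge from the paper and where the obstacle you flag is genuine. The paper dispatches it in one line, quoting $e\,\Ind_e(W)=e\otimes W\cong W$ from \cite{repbook} and asserting $L(e)\Ind_e(W)=0$ ``by definition''; you instead prove these two facts, but your proof of $s\ell\in L(e)$ for $s\in L(e)$, $\ell\in L_e$ goes through $\mathscr J$-classes and then uses ``$s\in Me$ and $s\J e$ imply $s\eL e$'', i.e.\ stability, and likewise $eMe\cap L_e=G_e$; both hold for finite monoids but not for arbitrary ones, whereas the lemma is stated for an arbitrary $M$. Moreover the difficulty is not an artifact of your route: the facts themselves fail in general. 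In the monoid of all self-maps of $\mathbb N$ with $e=1$ one has $L_1=\{\text{injective maps}\}$, $L(1)=\{\text{non-injective maps}\}$, and $\ell(n)=2n$, $s(2n)=n$, $s(2n+1)=0$ give $s\in L(1)$, $\ell\in L_1$, $s\ell=1\in L_1$; so $L(e)$ does not annihilate $KL_e$ (that $L(e)$ is a \emph{left ideal}, which is what makes $KL_e$ a $KM$-module, is a weaker elementary statement and does not yield this). Consequently the statement you are actually proving in the converse --- that \emph{every} induced module $\Ind_e(W)$ satisfies (1)--(3) --- is false for general $M$, so your argument (and, strictly speaking, the paper's one-line justification, whose citation is to a finite-monoid reference) only covers the finite, or more generally stable, case; a fully general proof would have to add such a hypothesis or exploit the specific isomorphism $V\cong\Ind_e(eV)$ rather than the induced form alone. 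In the paper's later applications (Corollary~\ref{c:action.is.induced} as used in Theorem~\ref{t:explicit.res.triv}) only the ``if'' direction is needed, or $M$ is finite, so nothing downstream is affected; but as a proof of Lemma~\ref{l:induced} in its stated generality, your converse has this gap.
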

\begin{proof}
Fix a set $T$ of orbit representatives of $G_e$ on $L_e$ with $e\in T$. Recall that $L_e$ is a free right $G_e$-set.  Therefore, if $W$ is a $KG_e$-module, then as a $K$-module, $\Ind_e(W)=\bigoplus_{t\in T} t\otimes W$.  To prove the ``only if'' statement, note that $e(\Ind_e(W))=e\otimes W\cong W$ as $KG_e$-modules by~\cite[Propostion 4.5]{repbook}, and so $\Ind_e(W)\cong \Ind_e(e(\Ind_e(W)))$.  By definition $L(e)\Ind_e(W)=0$, and since $tG_e(e\Ind_e(W))=t\otimes W$, also (1) and (3) are clear.

For the converse, assume that $V$ satisfies (1)--(3).  Define a $K$-bilinear map $\psi\colon KMe\times eV\to V$ by $\psi(a,v)=av$ for $a\in KMe$.     Since $L(e)V=0$, it follows that if $a\in KL(e)$, then $\psi(a,v)=0$ for all $v\in eV$.  Thus $\psi$ induces a well-defined $K$-bilinear map $\beta\colon KL_e\times eV\to V$ (recall that $KL_e=KMe/KL(e)$) with $\beta(a,v)=av$ for $a\in KL_e$.  Clearly, if $g\in G_e$, then $\beta(ag,v)=agv=\beta(a,gv)$, and so $\beta$ induces a homomorphism $\gamma\colon \Ind_e(eV)\to V$ with $\gamma(a\otimes v)=av$ for $a\in KL_e$ and $v\in V$.  Note that $\gamma$ is $KM$-linear as $\gamma(m(a\otimes v))=\psi(ma,v) = mav=m\psi(a,v)=m\gamma(a\otimes v)$ for $m\in M$.

We show that $\gamma$ is onto.  If $v\in V$, then by (1), we can write $v=\sum_{i=1}^n m_iev_i$ with $v_i\in V$ and $m_i\in M$.  If $m_ie\in L(e)$, then $m_iev_i=0$, and so we may assume without loss of generality that each $m_ie\in L_e$.  Then $v=\gamma(\sum_{i=1}^n m_ie\otimes ev_i)$.  To see that $\gamma$ is injective, we use that $\Ind_e(eV)=\bigoplus_{t\in T} t\otimes eV$ as a $K$-module.   Clearly, $\gamma(t\otimes eV) = teV=tG_eeV$.  It follows from (3) that if $x=\sum_{t\in T}t\otimes v_t$ belongs to  $\ker \gamma$ with each $v_t\in eV$, then $tv_t=0$ for all $t\in T$.  Choose $m_t\in M$ with $m_tt=e$ (as $t\in L_e$).  Then $0=m_ttv_t=ev_t=v_t$.  Thus $\gamma$ is injective.  This completes the proof.
\end{proof}

We mostly will use the following corollary.

\begin{Cor}\label{c:action.is.induced}
Let $M$ be a monoid, $K$ a commutative ring and $(X,Y)$ an $M$-set pair.  Suppose that $e\in E(M)$ is such that:
\begin{enumerate}
  \item $X\setminus Y\subseteq MeX$;
  \item $L(e)X\subseteq Y$;
  \item If $x,x'\in eX\setminus eY$ and $m,m'\in L_e$ with $mx=m'x'$, then $mG_e=m'G_e$.
\end{enumerate}
Then $KX/KY\cong \Ind_e(K[eX\setminus eY])$.
\end{Cor}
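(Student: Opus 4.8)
The plan is to apply Lemma~\ref{l:induced} to the $KM$-module $V=KX/KY=K[X\setminus Y]$ and the idempotent $e$; the three conditions (1)--(3) of that lemma will be verified, in order, from hypotheses (1)--(3) of the corollary. The first point to settle is the $KG_e$-module $eV$. Since $Y$ is $M$-invariant, for $x\in X$ one has $ex\in Y$ if and only if $ex\in eY$; hence $eKX\cap KY=K[eX\cap Y]=K[eY]$ inside $KX$, and therefore
\[eV=(eKX+KY)/KY\cong eKX/(eKX\cap KY)\cong K[eX\setminus eY]\]
as $K$-modules. Because $g=ege$ for $g\in G_e$, one computes $ge=g=eg$, so $g$ acts on $z\in eX$ by $gz=gz$ with $e(gz)=gz$; thus $G_e$ preserves $eX$ and, similarly, $eY$, so $K[eX\setminus eY]$ is a $KG_e$-module and the displayed identification is an isomorphism of $KG_e$-modules. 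Under it, the image of $eV$ inside $V$ is the $K$-span of $\{\,z+KY : z\in eX\setminus eY\,\}$, a submodule spanned by a subset of the distinguished basis $X\setminus Y$ of $V$.

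Next I would check the three conditions of Lemma~\ref{l:induced}. For $MeV=V$: given a basis element $x+KY$ with $x\in X\setminus Y$, hypothesis~(1) of the corollary gives $x=mex'$ for some $m\in M$, $x'\in X$; here $ex'\notin Y$ (otherwise $x=mex'\in Y$), so $ex'+KY\in eV$ and $x+KY=m(ex'+KY)\in MeV$. For $L(e)V=0$: for $s\in L(e)$ and a basis element $x+KY$, hypothesis~(2) gives $sx\in L(e)X\subseteq Y$, so $s(x+KY)=sx+KY=0$. Both are immediate translations.

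The condition carrying the genuine content is Lemma~\ref{l:induced}(3). Since $e$ is the identity of $G_e$ and acts as the identity on $eV$, and $G_e$ preserves $eX\setminus eY$, we have $G_e\,eV=eV$, so $tG_e\,eV=t(eV)$ for $t\in L_e$; this submodule is the $K$-span of the set
\[B_t=\{\,tz : z\in eX\setminus eY,\ tz\notin Y\,\}\subseteq X\setminus Y\]
of basis vectors of $V$. The key observation is that if $t,t'\in L_e$ with $tG_e\neq t'G_e$, then $B_t\cap B_{t'}=\emptyset$: any element of the intersection is of the form $tz=t'z'$ with $z,z'\in eX\setminus eY$ and $t,t'\in L_e$, which forces $tG_e=t'G_e$ by hypothesis~(3), a contradiction. (The representative $t'$ of a coset is irrelevant since $t'g\cdot eV=t'eV$ for $g\in G_e$.) Thus $tG_e\,eV$ and $\sum_{t'G_e\neq tG_e}t'G_e\,eV$ are spanned by disjoint subsets of the basis $X\setminus Y$, hence intersect trivially, which is exactly Lemma~\ref{l:induced}(3). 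Lemma~\ref{l:induced} now yields $V\cong\Ind_e(eV)\cong\Ind_e(K[eX\setminus eY])$, as required. The only real work is the bookkeeping identifying $eV$ with $K[eX\setminus eY]$ and pinning down the support of $tG_e\,eV$ as $B_t$; once that is in place, condition~(3) of the lemma is a direct consequence of hypothesis~(3) of the corollary, and the other two conditions are routine.
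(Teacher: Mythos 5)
Your proposal is correct and follows essentially the same route as the paper: identify $e(KX/KY)$ with $K[eX\setminus eY]$, then verify the three conditions of Lemma~\ref{l:induced}, with condition (3) handled by showing the submodules $tG_e\,eV$ are spanned by pairwise disjoint subsets of the distinguished basis $X\setminus Y$. Your extra bookkeeping (the explicit sets $B_t$ and the check that $eX\cap Y=eY$) just spells out steps the paper states without detail.
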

\begin{proof}
Note that $eX\setminus eY$ is a $G_e$-set and that $e(KX/KY)\cong K[eX\setminus eY]$ as $KG_e$-modules.   We verify that $KX/KY$ satisfies the three properties in Lemma~\ref{l:induced}.  First of all, the cosets of elements of $X\setminus Y$ form a $K$-basis for $KX/KY$, and so it is immediate from (1) that $Me(KX/KY)=KX/KY$.  Trivially, (2) implies $L(e)(KX/KY)=0$.  Finally, note that if $x\in eX\setminus eY$ and $m\in L_e$, then $x=ex\in Mmx$, and so $mx\notin Y$.  This and (3) imply that the sets $tG_e(eX\setminus eY)$, with $tG_e\in L_e/G_e$,  are pairwise disjoint nonempty subsets of $X\setminus Y$.  Recalling that the cosets of $X\setminus Y$ form a $K$-basis for $KX/KY$, we deduce that $\sum_{tG_e\in L_e/G_e} tG_ee(KX/KY)=\bigoplus_{tG_e\in L_e/G_e} tG_ee(KX/KY)$ as $K$-modules, and so (3) of Lemma~\ref{l:induced} is satisfied by $V=KX/KY$.  We deduce that $KX/KY\cong \Ind_e(e(KX/KY))$ by Lemma~\ref{l:induced}, and the result follows.
\end{proof}

Next we observe that the action of $M$ on $M/{\R}$ is cellular in the sense of~\cite{ourmemoirs}.  An order-preserving map $f\colon P\to Q$ of posets is \emph{cellular} if $q\leq f(p)$ implies there is $p'\leq p$ with $f(p')=q$.  A key property of cellular maps is that if $\sigma$ is a chain in $Q$ with largest element $q$ and $f(p)=q$, then there is a chain $\tau$ with largest element $p$ such that $f(\tau)=\sigma$; see~\cite[Lemma~3.4]{ourmemoirs}.  The origin of the term cellular is that if $P,Q$ are CW posets and $f\colon P\to Q$ is a cellular map, then $f$ can be geometrically realized as a regular cellular map between the corresponding regular $CW$ complexes (see~\cite[Lemma~3.5]{ourmemoirs}).

\begin{Prop}\label{p:action.cellular}
The action of a monoid $M$ on $M/{\R}$ is by cellular maps.
\end{Prop}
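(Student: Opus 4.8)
The plan is to verify directly that left multiplication by any element $m\in M$ induces a cellular self-map of the poset $M/{\R}$ of principal right ideals (ordered by inclusion), this being exactly the action in question: $m\cdot xM = mxM$. The first point to settle is that this is well defined, i.e.\ that left multiplication by $m$ respects $\R$. If $xM=x'M$, then $x=x'u$ and $x'=xv$ for suitable $u,v\in M$, so $mx=mx'u\in mx'M$ and $mx'=mxv\in mxM$, whence $mxM=mx'M$. The same computation, retaining only one of the two inclusions, shows order-preservation: if $xM\subseteq x'M$, write $x=x'w$ to get $mx=mx'w\in mx'M$, hence $mxM\subseteq mx'M$. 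Thus $\lambda_m\colon M/{\R}\to M/{\R}$, $\lambda_m(xM)=mxM$, is a well-defined order-preserving map.

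It then remains to establish the cellularity condition from the definition recalled above: whenever $yM\leq\lambda_m(xM)$, there must be some $x'M\leq xM$ with $\lambda_m(x'M)=yM$. So suppose $yM\subseteq mxM$. Then $y\in mxM$, say $y=mxz$ with $z\in M$. Setting $x'=xz$, we obtain $x'M=xzM\subseteq xM$, while $\lambda_m(x'M)=mxzM=yM$. This is precisely the required lifting, so each $\lambda_m$ is cellular and the action of $M$ on $M/{\R}$ is by cellular maps.

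There is essentially no obstacle here; the only step needing a moment of care is the well-definedness of the left $M$-action on the quotient poset $M/{\R}$ (left multiplication must respect Green's relation $\R$), and once that is in hand the cellularity falls out immediately from the factorization $y=mxz$ witnessing $yM\subseteq mxM$. No topology enters this statement; it is used afterwards, together with \cite[Lemma~3.5]{ourmemoirs}, to realize the $M$-action on $\Delta(M/{\R})$ by regular cellular maps.
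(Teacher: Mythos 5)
Your proof is correct and follows essentially the same route as the paper's: order-preservation is immediate from $xM\subseteq x'M\Rightarrow mxM\subseteq mx'M$, and cellularity comes from writing $y=mxz$ and taking $x'=xz$ so that $x'M\subseteq xM$ and $mx'M=yM$. The extra check of well-definedness is harmless but not needed, since the action is defined on elements ($xM\mapsto mxM$ as an action on the set of principal right ideals) rather than on $\R$-classes via representatives.
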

\begin{proof}
The action of $M$ is clearly order preserving as $aM\subseteq bM$ implies $maM\subseteq mbM$.
Let $m\in M$.  We need to show that if $bM\subseteq maM$, then there is $cM\subseteq aM$ with $mcM=bM$.  But if $bM\subseteq maM$,  then $b=maz$ for some $z\in M$.  It follows that $bM = m(azM)$ and $azM\subseteq aM$.  This completes the proof.
\end{proof}

The next theorem generalizes~\cite[Theorem~5.12]{ourmemoirs}.

\begin{Thm}\label{t:explicit.res.triv}
Let $M$ be a finite regular monoid and $K$ a field. Then the augmented simplicial chain complex $C_\bullet(\Delta(M/{\R}),K)\to K$ is a resolution of the trivial $KM$-module $K$ by modules which are filtered by induced modules.  Moreover, if the characteristic of $K$ divides the order of no maximal subgroup of $M$, and the sandwich matrices of the $\mathscr J$-classes of $M$ are right invertible over the corresponding group algebras, then it is a projective resolution. More generally, under these latter assumptions, if $V$ is a $KG(M)$-module, then $C_\bullet(\Delta(M/{\R}),V)\to V$ is a projective resolution of $V$, where we view $C_q(\Delta(M/{\R}),V)$ as the tensor product $C_q(\Delta(M/{\R}),K)\otimes_K V$ as a $KM$-module.
\end{Thm}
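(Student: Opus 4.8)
The plan is to prove the three assertions in sequence, using the $M$-set–pair machinery and Corollary~\ref{c:action.is.induced}. First, that $C_\bullet(\Delta(M/{\R}),K)\to K$ is a resolution of the trivial module: the poset $M/{\R}$ has a greatest element, namely the principal right ideal $1M=M$, so $\Delta(M/{\R})$ is a cone with apex this element and hence contractible (alternatively, apply Lemma~\ref{l:SegalLemma} to the identity functor and the constant functor at the greatest element, which are related by a natural transformation); therefore its augmented normalized chain complex is exact over $K$. By Proposition~\ref{p:action.cellular} the left action of $M$ on $M/{\R}$ is order-preserving, so it acts by simplicial maps on the nerve and cellularly on $\Delta(M/{\R})$; hence each $C_q(\Delta(M/{\R}),K)$ is a left $KM$-module, the boundary maps are $KM$-linear, and the augmentation (sending each vertex to $1\in K$) is $KM$-linear since $M$ acts trivially on $K$. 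This gives the resolution of the trivial $KM$-module (regularity is not needed for this part).

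Next, that the chain modules are filtered by induced modules. Identify $C_q(\Delta(M/{\R}),K)$ with $K[X_q\setminus Y_q]$, where $X_q$ is the $M$-set of weakly increasing tuples $(I_0\leq\cdots\leq I_q)$ in $M/{\R}$ with the entrywise action and $Y_q\subseteq X_q$ is the $M$-invariant subset of degenerate tuples; one checks the induced $KM$-action on $K[X_q\setminus Y_q]$ is the normalized chain-complex action. For $\sigma=(I_0\leq\cdots\leq I_q)$ let $J(\sigma)$ be the $\mathscr J$-class of a generator of the minimum $I_0$; since the action is order-preserving, $J(m\sigma)\leq_{\mathscr J} J(\sigma)$ for all $m$. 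Fixing a linear extension $J^{(1)},\ldots,J^{(r)}$ of the $\mathscr J$-order, the sets $Z^{(i)}_q=\{\sigma\in X_q:J(\sigma)\preceq J^{(i)}\}\cup Y_q$ are $M$-invariant and yield a filtration $Y_q=Z^{(0)}_q\subseteq\cdots\subseteq Z^{(r)}_q=X_q$ whose successive quotients $K[Z^{(i)}_q\setminus Z^{(i-1)}_q]$ are spanned by the strict chains with minimum in $J^{(i)}$. As $M$ is regular every $\mathscr J$-class is regular; fix an idempotent $e=e_{J^{(i)}}\in J^{(i)}$. I would then verify the three hypotheses of Corollary~\ref{c:action.is.induced} for the pair $(Z^{(i)}_q,Z^{(i-1)}_q)$ and $e$, obtaining $K[Z^{(i)}_q\setminus Z^{(i-1)}_q]\cong\Ind_e\bigl(e\cdot K[Z^{(i)}_q\setminus Z^{(i-1)}_q]\bigr)$, a module induced from $G_e$. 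Hypothesis (2) holds because $L(e)=Me\setminus L_e$ is a left ideal missing $L_e$, so it strictly lowers the minimum; hypothesis (1) uses that the left action of $M$ on $M/{\R}$ is transitive on the $\mathscr R$-classes contained in $J^{(i)}$ (each such $\mathscr R$-class contains an idempotent, so the pertinent sandwich-matrix entries are units), which lets a strict chain with minimum in $J^{(i)}$, together with its higher terms, be lifted through left multiplication from a chain starting at $eM$; hypothesis (3), the disjointness condition, uses that $L_e$ is a free right $G_e$-set. Hence each $C_q(\Delta(M/{\R}),K)$ is filtered by induced modules, which with Step~1 proves the first assertion.

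Now assume in addition that the characteristic of $K$ divides the order of no maximal subgroup of $M$ and that every sandwich matrix is right invertible over the corresponding group algebra. Then $KG_e$ is semisimple, so any $KG_e$-module $W$ is a direct sum of simple modules, and under these hypotheses every simple $KG_e$-module is of the form $eS$ for a simple $KM$-module $S$ of apex $J_e$ (the classical parametrization of simple modules of a finite monoid, cf.\ \cite[Chapter~5]{repbook}), for which the natural map $\Ind_e(eS)\to S$ is a projective cover (cf.\ \cite[Theorem~4.4]{rrbg}, as used in the proof of Theorem~\ref{t:directed}). Hence $\Ind_e(W)$ is a projective $KM$-module, and since an extension of projective $KM$-modules is projective, each $C_q(\Delta(M/{\R}),K)$ is projective, so $C_\bullet(\Delta(M/{\R}),K)\to K$ is a projective resolution. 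Finally, for a $KG(M)$-module $V$: being a module over the field $K$, it is projective as a $K$-module, so Theorem~\ref{t:to.triv} shows each $C_q(\Delta(M/{\R}),K)\otimes_K V$, with the diagonal $KM$-action, is a projective $KM$-module; tensoring the exact complex $C_\bullet(\Delta(M/{\R}),K)\to K$ with $V$ over $K$ preserves exactness, and $K\otimes_K V\cong V$ as $KM$-modules, so $C_\bullet(\Delta(M/{\R}),V)\to V$ is a projective resolution of $V$.

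The main obstacle is Step~2: verifying hypotheses (1)--(3) of Corollary~\ref{c:action.is.induced} for the subquotients, in particular the lifting statement behind hypothesis (1) and the disjointness hypothesis (3). This is where the structure theory of finite regular monoids — the egg-box picture of a regular $\mathscr J$-class, transitivity of left multiplication on its $\mathscr R$-classes, and freeness of the $G_e$-action on $L_e$ — has to be deployed carefully; the remaining steps are formal given Steps~1 and~3 and the results already established.
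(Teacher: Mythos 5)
Your Steps 1 and 3 are correct and coincide with the paper's argument (contractibility of $\Delta(M/{\R})$ from the maximum element $M$, projectivity of induced modules via \cite[Theorem~4.4]{rrbg} and semisimplicity of the $KG_e$, and the tensor statement via Theorem~\ref{t:to.triv}). The genuine gap is Step 2, and it is not just a matter of missing details: filtering the $q$-chains by the $\mathscr J$-class of the \emph{minimum} of a chain and inducing from an idempotent $e$ in that class cannot work. For $q\geq 1$ the $e$-part of your subquotient is zero: if $\sigma$ is a strict chain $a_0M\subsetneq\cdots\subsetneq a_qM$ with $a_0\in J_e$, then all entries of $e\sigma$ lie in $eM$; if $ea_0$ still lies in $J_e$, stability (\cite[Theorem~1.13]{repbook}) forces $ea_0M=eM$, leaving no room for a strictly larger entry inside $eM$, so $e\sigma$ is degenerate, and if $ea_0$ drops out of $J_e$ then $e\sigma$ lies in an earlier layer of your filtration; either way $e\cdot K[Z^{(i)}_q\setminus Z^{(i-1)}_q]=0$, so $\Ind_e$ of it is zero while the subquotient is not. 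Concretely, take $M=T_2$, $e=c_1$ a constant map, $q=1$: the chain $c_1M\subsetneq M$ has minimum in $J_e$, but it does not lie in $Mc_1X$, since every element of $Mc_1$ is a constant and collapses all of $M/{\R}$ to one point; so your hypothesis (1) and the lifting claim behind it are false (in this example the subquotient is in fact induced from the group of units, i.e., from the $\mathscr J$-class of the \emph{maximum}). The directional problem is that cellularity (Proposition~\ref{p:action.cellular}, \cite[Lemma~3.4]{ourmemoirs}) lets you lift a chain lying \emph{below} a lifted top element; it gives no control over the part of a chain lying \emph{above} a lifted minimum.

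The repair is to filter by the maximum, which is what the paper does. Fix a principal series $\emptyset=I_0\subsetneq I_1\subsetneq\cdots\subsetneq I_n=M$ and set $P_k=\{mM\mid m\in I_k\}$; these are $M$-invariant lower sets of $M/{\R}$, so the $\Delta(P_k)$ form an $M$-invariant filtration of $\Delta(M/{\R})$, and the subquotients of $C_q(\Delta(M/{\R}),K)$ are the relative chain modules $C_q(\Delta(P_k),\Delta(P_{k-1}),K)$, spanned by the strict $q$-chains whose maximum is generated by an element of $J_{e_k}$, where $e_k\in E(M)\cap(I_k\setminus I_{k-1})$. For these the hypotheses of Corollary~\ref{c:action.is.induced} do hold: writing $\max\sigma=xe_kyM$, cellularity lifts the whole chain to a chain $\tau$ lying below $e_kyM$ with $x\tau=\sigma$ and $e_k\tau=\tau$, giving (1); stability gives $L(e_k)\subseteq I_{k-1}$, giving (2); and stability shows every chain in $e_kX\setminus e_kY$ has maximum exactly $e_kM$, so (3) follows from \cite[Corollary~1.17]{repbook}. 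With that replacement for your Step 2, your Steps 1 and 3 finish the proof exactly as you wrote them.
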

\begin{proof}
The second statement follows from the first. Indeed,  by Theorem~\cite[Theorem~4.4]{rrbg}, if $e\in E(M)$ and $S$ is a simple $KG_e$-module, then $\Ind_e(S)$ is a projective $KM$-module under our hypotheses.  Since $KG_e$ is semisimple, and hence all modules are direct sums of simple modules, it then follows that $\Ind_e(W)$ is projective for any $KG_e$-module $W$.  But a module with a filtration by projective modules is well known to be projective (cf.~\cite[Lemma~5.9]{ourmemoirs}).    The third statement follows from the second since tensoring over $K$ is exact and Theorem~\ref{t:to.triv} implies that $P\otimes_K V$ is projective whenever $P$ is projective.  Thus we prove the first statement.

 Since $M/{\R}$ has maximum element $M$, the category associated to $M/{\R}$ has a terminal object and hence $\Delta(M/{\R})$ is contractible.  Since $M$ acts on $M/{\R}$ via order-preserving maps, we have that $C_\bullet(\Delta(M/{\R}),K)\to K$ is a resolution of the trivial $KM$-module.  It remains to show that each module $C_q(\Delta(M/{\R}),K)$ is  filtered by induced modules.

Fix a principal series for $M$, that is, an unrefinable chain of two-sided ideals
\[\emptyset=I_0\subsetneq I_1\subsetneq\cdots\subsetneq I_n=M.\]
The differences $I_k\setminus I_{k-1}$, with $1\leq k\leq n$, are $\mathscr J$-classes and each $\mathscr J$-class of $M$ appears exactly once in this way; see~\cite[Proposition~1.20]{repbook}.  By regularity, there is an idempotent $e_k\in I_k\setminus I_{k-1}$ for each $1\leq k\leq n$ (cf.~\cite[Proposition~1.22]{repbook}).   Let $P_k = \{mM\mid m\in I_k\}$ be the set of principal right ideals contained in $I_k$.  Then $P_k$ is an $M$-invariant subposet of $P_n=M/{\R}$, which is a lower set in the ordering.  Thus we have a filtration
\[\emptyset = \Delta(P_0)\subsetneq \Delta(P_1)\subsetneq \cdots \subsetneq \Delta(P_n)=\Delta(M/{\R})\] by $M$-invariant induced subcomplexes.  We then have a filtration
\[0=C_q(\Delta(P_0),K)\subseteq C_q(\Delta(P_1),K)\subseteq\cdots\subseteq C_q(\Delta(P_n),K)=C_q(\Delta(M/{\R}),K)\] with subquotients the relative chain spaces $C_q(\Delta(P_k),\Delta(P_{k-1}),K)$, for each $q\geq 0$.

First note that a $q$-simplex $\sigma$ of $\Delta(P_k)$ is of the form
\begin{equation}\label{eq:sigma.form}
m_0M\subsetneq \cdots \subsetneq m_qM
\end{equation}
 with $m_q\in I_k$. Put $m_qM=\max \sigma$.  Let $X$ be the $M$-set of all simplices of $\Delta(P_k)$ of dimension at most $q$ and let $Y$ be the $M$-invariant subset consisting of simplices either belonging to $\Delta(P_{k-1})$ or having dimension less than $q$.  We claim that $C_q(\Delta(P_k),\Delta(P_{k-1}),K)\cong KX/KY$.  Both sets have $K$-bases consisting of the cosets of those $q$-simplices $\sigma$  (oriented in the former case) with $\max \sigma\subseteq I_k$, but $\max\sigma\nsubseteq I_{k-1}$.  Moreover, since $M$ acts by order-preserving maps, if $[\sigma]$ is an oriented $q$-simplex of $\Delta(P_k)$ with $\max \sigma\nsubseteq I_{k-1}$ and $m\in M$, then  \[m([\sigma]+C_q(\Delta(P_{k-1}),K)) = \begin{cases} [m\sigma]+C_q(\Delta(P_{k-1}),K), & \text{if}\ m\sigma\notin Y\\ 0, & \text{else.}\end{cases}\]     Thus $C_q(\Delta(P_k),\Delta(P_{k-1}),K)\cong KX/KY$.  We verify that $(X,Y)$ satisfies the hypothesis of Corollary~\ref{c:action.is.induced} with respect to $e_k$.  It will then follow that $C_q(\Delta(P_k),\Delta(P_{k-1}),K)\cong \Ind_{e_k}(e_kX\setminus e_kY)$.

Suppose that $\sigma\in X\setminus Y$ is as per \eqref{eq:sigma.form} (it must be a $q$-simplex).  Then $m_q\in I_k\setminus I_{k-1}=J_{e_k}$.  Thus we can write $m_q=xe_ky$ with $x,y\in M$. Since $m_qM=x(e_kyM)$ and the action of $x$ on $M/{\R}$ is cellular by Proposition~\ref{p:action.cellular}, we can find a chain $\tau = n_0M\subsetneq \cdots \subsetneq n_qM=e_kyM\subseteq I_k$  with $x\tau = \sigma$ by~\cite[Lemma~3.4]{ourmemoirs}.    Note that $e_k\tau=\tau$ since $n_iM\subseteq e_kyM$ for all $0\leq i\leq q$, whence $\tau\in X$ and $\sigma=xe_k\tau$.  Thus $X\setminus Y\subseteq Me_kX$.

Next we observe that $L(e_k)\subseteq I_{k-1}$.  Indeed, $Me_k\cap J_{e_k}=L_{e_k}$ by the stability of finite monoids~\cite[Theorem~1.13]{repbook}.  Thus $L(e_k)\subseteq I_k\setminus J_{e_k}=I_{k-1}$.   It now follows that $L(e_k)X\subseteq Y$ from the definition.   Suppose that $\sigma,\sigma'\in e_kX\setminus e_kY$ are $q$-simplices, and $m,m'\in L_{e_k}$ with $m\sigma=m'\sigma'$.  Since $e_k\sigma=\sigma$ and $e_k\sigma'=\sigma'$, we have that  $\max \sigma,\max\sigma'\subseteq e_kM$.  But $\max\sigma =m_qM$, $\max\sigma'=m_q'M$ with $m_q,m_q'\in I_k\setminus I_{k-1}=J_{e_k}$.  Since $e_kM\cap J_{e_k}=R_{e_k}$ by stability~\cite[Theorem~1.13]{repbook}, it follows that $\max\sigma=e_kM=\max\sigma'$.   Thus $mM=me_kM=\max m\sigma =\max m'\sigma' = m'e_kM=m'M$.  It follows that $mG_{e_k}=m'G_{e_k}$ by~\cite[Corollary~1.17]{repbook}.  This completes the proof.
\end{proof}

Our first application of Theorem~\ref{t:explicit.res.triv} is to give an explicit projective resolution of length $n$ for the trivial module for $\mathrm{Aff}(n,q)$ in good characteristic.

\begin{Thm}\label{t:explicit.res.triv.aff}
Let $K$ be a field a field whose characteristic does not divide the order of $\mathrm{AGL}(n,q)$.  Let $P(n,q)$ be the poset of nonempty affine subspaces of $\mathbb F_q$, on which $\mathrm{Aff}(n,q)$ acts via direct image.   Then the augmented simplicial chain complex $C_\bullet(\Delta(P(n,q)),K)\to K$ is a projective resolution of the trivial module $K$ of length $n$.
\end{Thm}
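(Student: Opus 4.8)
The plan is to obtain this statement as a direct application of Theorem~\ref{t:explicit.res.triv} to the monoid $M=\mathrm{Aff}(n,q)$, once one identifies the $M$-poset $M/{\R}$ of principal right ideals with $P(n,q)$. Three things must be checked: that $\mathrm{Aff}(n,q)$ meets the hypotheses of Theorem~\ref{t:explicit.res.triv}, that $M/{\R}\cong P(n,q)$ as $M$-posets, and that the resulting resolution has length exactly $n$. Essentially all of the substantive input is already present in the proof of Theorem~\ref{t:globdim}, so the argument is mostly a matter of assembly.

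For the hypotheses, I would recall from the proof of Theorem~\ref{t:globdim} that $\mathrm{Aff}(n,q)\cong \mathbb F_q^n\rtimes M_n(\mathbb F_q)$ is finite and regular (Proposition~\ref{p:basic.semidirect}(1), using that $M_n(\mathbb F_q)$ is regular), and that its maximal subgroups are the groups $\mathrm{AGL}(r,q)$ for $0\le r\le n$. Since $|\mathrm{AGL}(r,q)|=q^r|\mathrm{GL}_r(\mathbb F_q)|$ divides $q^n|\mathrm{GL}_n(\mathbb F_q)|=|\mathrm{AGL}(n,q)|$, the characteristic of $K$ divides the order of no maximal subgroup. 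Moreover $KM_n(\mathbb F_q)$ is semisimple by Kov\'acs's theorem, so each sandwich matrix of $M_n(\mathbb F_q)$ is invertible over the corresponding group algebra, and then by Proposition~\ref{p:semidirect.directed} each sandwich matrix of $\mathbb F_q^n\rtimes M_n(\mathbb F_q)$ is right invertible over the corresponding group algebra. Hence Theorem~\ref{t:explicit.res.triv} applies and $C_\bullet(\Delta(M/{\R}),K)\to K$ is a projective resolution of the trivial module.

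For the identification, I would use Proposition~\ref{p:basic.semidirect}(5) together with the elementary fact that $AM_n(\mathbb F_q)\subseteq A'M_n(\mathbb F_q)$ if and only if $\operatorname{im}A\subseteq \operatorname{im}A'$ (both already invoked in the proof of Theorem~\ref{t:globdim}): for affine maps $T,T'$ one has $TM\subseteq T'M$ if and only if $T(\mathbb F_q^n)\subseteq T'(\mathbb F_q^n)$, and the images of affine maps are precisely the nonempty affine subspaces of $\mathbb F_q^n$. Thus $TM\mapsto T(\mathbb F_q^n)$ is a poset isomorphism $M/{\R}\to P(n,q)$, and it is $M$-equivariant: the left action $m\cdot(aM)=maM$ corresponds under the isomorphism to $T'(\mathbb F_q^n)\mapsto (mT')(\mathbb F_q^n)=m(T'(\mathbb F_q^n))$, the direct image of the affine subspace $T'(\mathbb F_q^n)$ under the affine map $m$ (which remains a nonempty affine subspace). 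Therefore $C_\bullet(\Delta(M/{\R}),K)\to K$ is isomorphic, as a complex of $K\mathrm{Aff}(n,q)$-modules, to $C_\bullet(\Delta(P(n,q)),K)\to K$, which is consequently a projective resolution of $K$.

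For the length, observe that a strict chain $V_0\subsetneq\cdots\subsetneq V_k$ in $P(n,q)$ forces $\dim V_0<\cdots<\dim V_k$ with all dimensions lying in $\{0,1,\ldots,n\}$, so $k\le n$, while the chain of affine subspaces of dimensions $0,1,\ldots,n$ realizes $k=n$; hence $\Delta(P(n,q))$ has dimension exactly $n$, so $C_q(\Delta(P(n,q)),K)=0$ for $q>n$ and $C_n(\Delta(P(n,q)),K)\ne 0$. Thus the resolution has length exactly $n$, in agreement with $\pd K=n$ from Theorem~\ref{t:globdim}. I do not expect a genuine obstacle here: the only points requiring attention are the $M$-equivariance of the identification $M/{\R}\cong P(n,q)$ and the re-verification of the hypotheses of Theorem~\ref{t:explicit.res.triv}, both of which follow immediately from work already carried out for Theorem~\ref{t:globdim}.
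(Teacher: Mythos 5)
Your proposal is correct and follows essentially the same route as the paper: verify the hypotheses of Theorem~\ref{t:explicit.res.triv} via Proposition~\ref{p:semidirect.directed} and the analysis in the proof of Theorem~\ref{t:globdim}, identify $\mathrm{Aff}(n,q)/{\R}$ with $P(n,q)$ equivariantly using Proposition~\ref{p:basic.semidirect}(5), and conclude from $\dim\Delta(P(n,q))=n$. The extra detail you supply (divisibility of $|\mathrm{AGL}(r,q)|$ into $|\mathrm{AGL}(n,q)|$, the explicit chain of affine subspaces realizing length $n$) is consistent with what the paper leaves implicit.
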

\begin{proof}
Proposition~\ref{p:semidirect.directed} and the proof of Theorem~\ref{t:globdim} imply that the hypotheses of  the second statement of Theorem~\ref{t:explicit.res.triv} apply to $\mathrm{Aff}(n,q)$ and $K$.  We observed in the proof of Theorem~\ref{t:globdim}, that $\mathrm{Aff}(n,q)/{\R}\cong P(n,q)$ via the map sending $T\mathrm{Aff}(n,q)$ to the image of $T$, and this map is clearly $M$-equivariant.  The result now follows as $\Delta(P(n,q))$ has dimension $n$.
\end{proof}

Our next goal is to provide a proof of Corollary~\ref{c:main.fd.coind}(4) for finite regular monoids avoiding classifying spaces and spectral sequences.

\begin{Prop}\label{p:homologically.orthogonal}
Let $M$ be a finite regular monoid and $K$ a field whose characteristic does not divide the order of any maximal subgroup of $M$.  Let $V$ be a finite dimensional $KM$-module filtered by induced modules and let $e\in E(M)$.  Then $\Ext^n_{KM}(V,\Coind_e(W))=0$ for any finite dimensional $KG_e$-module $W$ and $n\geq 1$.
\end{Prop}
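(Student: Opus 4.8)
Since $V$ is finite dimensional, its filtration by induced modules is finite, so an induction on the length of the filtration together with the long exact sequence in $\Ext^\bullet_{KM}(-,\Coind_e(W))$ reduces us to the case $V=\Ind_f(U)$ for some $f\in E(M)$ and some finite dimensional $KG_f$-module $U$. Now $KG_e$ is semisimple (the characteristic of $K$ does not divide $|G_e|$), so $W$ is an injective $KG_e$-module; since $\Coind_e(W)=\Hom_{KG_e}(KR_e,W)$ and $KR_e$ is a $KG_e$-$KM$-bimodule with the same $K$-action on both sides, Theorem~\ref{t:ext.dual.tor} applies (its hypothesis $\Ext^i_{KG_e}(-,W)=0$ for $i\ge 1$ is automatic by semisimplicity) and yields, for every $n$, an isomorphism $\Ext^n_{KM}(\Ind_f(U),\Coind_e(W))\cong \Hom_{KG_e}(\Tor_n^{KM}(KR_e,\Ind_f(U)),W)$. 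Hence it suffices to prove $\Tor_n^{KM}(KR_e,\Ind_f(U))=0$ for $n\ge 1$. Since the characteristic of $K$ does not divide $|G_f|$ either, $U$ is a projective $KG_f$-module, so it is a direct summand of a finite direct sum of copies of $KG_f$; as $\Ind_f$ is additive with $\Ind_f(KG_f)=KL_f$, the module $\Ind_f(U)$ is a direct summand of a finite direct sum of copies of $KL_f$, and since $\Tor$ commutes with finite direct sums it is enough to show $\Tor_n^{KM}(KR_e,KL_f)=0$ for all $n\ge 1$. I would prove this last statement for an arbitrary finite regular monoid, with no restriction on $K$.

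For this I would induct on the position of $J_f$ in the $\mathscr J$-order. If $J_f$ is minimal then stability forces $L(f)=\emptyset$, so $KL_f=KMf$, which is a projective left $KM$-module (a direct summand of ${}_{KM}KM$ via right multiplication by $f$), and the claim is clear. In general $KMf$ is still projective, and stability gives $L(f)=\{x\in Mf: J_x<J_f\}$; by regularity this left ideal is a finite union of principal left ideals generated by idempotents strictly below $J_f$, so $KL(f)$ admits a finite filtration by left $KM$-submodules whose successive quotients are of the form $KL_h$ with $J_h<J_f$. The inductive hypothesis and the long exact sequence in $\Tor$ then give $\Tor_n^{KM}(KR_e,KL(f))=0$ for $n\ge 1$. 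Feeding the short exact sequence $0\to KL(f)\to KMf\to KL_f\to 0$ into the long exact sequence, and using that $KMf$ is projective, gives $\Tor_n^{KM}(KR_e,KL_f)=0$ for $n\ge 2$ and identifies $\Tor_1^{KM}(KR_e,KL_f)$ with the kernel of the map $\iota_\ast\colon KR_e\otimes_{KM}KL(f)\to KR_e\otimes_{KM}KMf$ induced by the inclusion $KL(f)\hookrightarrow KMf$.

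The crux is therefore to show $\iota_\ast$ is injective, and this is the step I expect to require real work (the $n\ge 2$ cases being pure dimension shifting). Since $KeM=eKM$ is a projective right $KM$-module, tensoring $0\to KR(e)\to KeM\to KR_e\to 0$ with $KL(f)$ and with $KMf$ and using $eKM\otimes_{KM}N\cong eN$ identifies $KR_e\otimes_{KM}KL(f)\cong eKL(f)/K[R(e)L(f)]$ and $KR_e\otimes_{KM}KMf\cong eKMf/K[R(e)f]$, with $\iota_\ast$ induced by the inclusion $eKL(f)\hookrightarrow eKMf$. The key combinatorial facts, which is where von Neumann regularity enters, are the set-theoretic identities $R(e)L(f)=eL(f)\cap R(e)$ and $R(e)f=eMf\cap R(e)$: the inclusions $\subseteq$ are immediate from $R(e)$, $L(f)$, $Mf$ being ideals on the appropriate sides, and for the reverse inclusions one writes $x=xtx$ for $x$ in the intersection and notes that $xt\in R(e)$ because $xt\in eM$ and $J_{xt}\le J_x<J_e$. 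These identities give $KR_e\otimes_{KM}KL(f)\cong K[eL(f)\cap R_e]$ and $KR_e\otimes_{KM}KMf\cong K[eMf\cap R_e]$, and under these identifications $\iota_\ast$ is the linear map attached to the inclusion of the index sets $eL(f)\cap R_e\hookrightarrow eMf\cap R_e$, which is injective. Hence $\Tor_1^{KM}(KR_e,KL_f)=0$, completing the induction and the proof.
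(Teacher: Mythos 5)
Your argument is correct, and it takes a genuinely different route from the paper's. After the same initial reduction (induction on the filtration length plus the long exact sequence in the first variable) to the case $V=\Ind_f(U)$, the paper splits into cases according to whether $MeM\subseteq MfM$, excises to a contracted algebra $KM/KI$ via \cite[Lemma~16.6]{repbook}, and then invokes \cite[Theorem~16.3]{repbook} to identify the $\Ext$-group with an $\Ext$ over the semisimple algebra $KG_e$ (respectively $KG_f$), which vanishes. You instead apply Theorem~\ref{t:ext.dual.tor} (legitimate, since $W$ is injective over the semisimple algebra $KG_e$) to rewrite $\Ext^n_{KM}(\Ind_f(U),\Coind_e(W))\cong\Hom_{KG_e}(\Tor^{KM}_n(KR_e,\Ind_f(U)),W)$, reduce to $\Tor^{KM}_n(KR_e,KL_f)=0$ for $n\geq 1$ using that $U$ is a summand of a finite free $KG_f$-module, and prove that Tor-vanishing for an arbitrary finite regular monoid in any characteristic by induction on the $\mathscr J$-order: $KMf$ is projective, $KL(f)$ is filtered with subquotients $KL_h$ with $J_h<J_f$, and $\Tor_1$ is killed by the identifications $KR_e\otimes_{KM}N\cong eN/K[R(e)]N$ together with the set identities $R(e)L(f)=eL(f)\cap R(e)$ and $R(e)f=eMf\cap R(e)$; these check out (your step ``$xt\in eM$ and $J_{xt}<J_e$ forces $xt\in R(e)$'' implicitly uses stability, which is fine since $M$ is finite). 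One expository point: the filtration of $KL(f)$ is most cleanly obtained by peeling off maximal $\mathscr L$-classes of the left ideal $L(f)$ one at a time (each contains an idempotent by regularity); merely writing $L(f)$ as a union of idempotent-generated principal left ideals does not by itself produce it, though the fact is standard. As for what each approach buys: the paper's proof is shorter given the quasi-hereditary/excision machinery of \cite{repbook}, whereas yours is self-contained relative to the tools developed in this paper and yields as a byproduct the characteristic-free statement that $\Tor^{KM}_n(KR_e,KL_f)=0$ for all $n\geq 1$ over any finite regular monoid (compare \cite{rrbg}), with the semisimplicity hypotheses entering only through the injectivity of $W$ over $KG_e$ and the projectivity of $U$ over $KG_f$.
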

\begin{proof}
Let $n\geq 1$.
Suppose that $U$ is a $KG_f$-module with $f\in E(M)$. If $MeM\subseteq MfM$, put $I=MeM\setminus J_e$ and note that $I$ is an ideal, and $\Ind_f(U)$ and $\Coind_e(W)$ are $KM/KI$-modules.  Then~\cite[Lemma~16.6]{repbook} yields \[\Ext^n_{KM}(\Ind_f(U),\Coind_e(W))\cong \Ext^n_{KM/kI}(\Ind_f(U),\Coind_e(W)).\]  However, by~\cite[Theorem~16.3]{repbook}, we have $\Ext^n_{KM/KI}(\Ind_f(U),\Coind_e(W))\cong \Ext^n_{KG_e}(e\Ind_f(U),W)=0$, where the last equality holds because $KG_e$ is semisimple.   Suppose that $MeM\nsubseteq MfM$.  Let $I' = MfM\setminus J_f$, which is an ideal, and note that  $\Ind_f(U)$ and $\Coind_e(W)$ are $KM/KI'$-modules because $MeM\nsubseteq MfM$.  Then  we have $\Ext^n_{KM}(\Ind_f(U),\Coind_e(W))\cong \Ext^n_{KM/kI'}(\Ind_f(U),\Coind_e(W))\cong \Ext^n_{KG_f}(U,f\Coind_e(W))=0$ by~\cite[Lemma~16.6]{repbook} and~\cite[Theorem~16.3]{repbook}, as $KG_f$ is semisimple.

We now proceed by induction on the length of a filtration \[0=V_0\subsetneq V_1\subsetneq \cdots\subsetneq V_k=V\] with $V_i/V_{i-1}$ an induced module, for $1\leq i\leq k$.  If $k=0$, there is nothing to prove.    Else, by induction, $\Ext^n_{KM}(V_{k-1},\Coind_e(W))=0$.   Since $V/V_{k-1}$ is induced, we know from the paragraph above $\Ext^n_{KM}(V/V_{k-1}, \Coind_e(W))=0$.   The long exact sequence for $\Ext$ associated to the exact sequence \[0\to V_{k-1}\to V\to V/V_{k-1}\to 0\] yields that $0=\Ext^n_{KM}(V/V_{k-1}, \Coind_e(W))\to \Ext^n_{KM}(V,\Coind_e(W))\to \Ext^n_{KM}(V_{k-1},\Coind_e(W))=0$ is exact for $n\geq 1$.  The result follows.
\end{proof}

The following lemma is elementary.

\begin{Prop}\label{p:tensor.eR/k}
Let $R$ be a ring, $e\in R$ an idempotent and $I$ a right ideal contained in $eR$ with $eReI=I$.  Then for any left $R$-module $M$, there is an isomorphism $(eR/I)\otimes_R M\to eM/IM$ of $eRe$-modules.
\end{Prop}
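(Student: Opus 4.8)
The plan is to deduce this from the standard natural isomorphism $eR\otimes_R M\cong eM$ (the functor $eR\otimes_R-$ is isomorphic to $e(-)$, as already used in the proof of Corollary~\ref{c:rings.eRe}) together with right-exactness of the tensor product. First I would note that, since $I\subseteq eR$ is a right ideal of $R$ with $eReI=I$, it is a sub-$eRe$-$R$-bimodule of $eR$, so that
\[0\longrightarrow I\longrightarrow eR\longrightarrow eR/I\longrightarrow 0\]
is a short exact sequence of $eRe$-$R$-bimodules. The hypothesis $eReI=I$ also makes $IM$ an $eRe$-submodule of $eM$, since $ere\cdot xm=(erex)m$ with $erex\in eReI=I$ for $ere\in eRe$, $x\in I$, $m\in M$; so both sides of the asserted isomorphism are genuinely $eRe$-modules.

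Next I would apply $-\otimes_R M$ to this sequence. Right-exactness yields an exact sequence of $eRe$-modules
\[I\otimes_R M\longrightarrow eR\otimes_R M\longrightarrow (eR/I)\otimes_R M\longrightarrow 0,\]
so $(eR/I)\otimes_R M$ is the cokernel of the first map. I would then transport the term $eR\otimes_R M$ along the natural isomorphism $\phi_M\colon eR\otimes_R M\xrightarrow{\sim}eM$, $er\otimes m\mapsto erm$. The key step is to identify the image of $I\otimes_R M$ inside $eM$ under $\phi_M$: by naturality of $\phi$, the composite $I\otimes_R M\to eR\otimes_R M\xrightarrow{\phi_M}eM$ sends $x\otimes m$ to $xm$, so its image is precisely $IM$. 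Therefore $\phi_M$ descends to an isomorphism of $eRe$-modules
\[(eR/I)\otimes_R M=\operatorname{coker}\bigl(I\otimes_R M\to eR\otimes_R M\bigr)\xrightarrow{\sim}eM/IM,\]
given explicitly by $(er+I)\otimes m\mapsto erm+IM$.

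The only point that needs care — and it is not really an obstacle — is that the canonical map $I\otimes_R M\to eR\otimes_R M$ need not be injective, so one should not attempt to prove $I\otimes_R M\cong IM$ directly; instead one works throughout with the image of $I\otimes_R M$, equivalently with the cokernel presentation above, for which right-exactness of $-\otimes_R M$ is exactly what is required. Since $\phi_M$ and the two quotient maps involved are all $eRe$-linear, the induced map is automatically an isomorphism of $eRe$-modules, as claimed.
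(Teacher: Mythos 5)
Your argument is correct, and it reaches the same explicit isomorphism $(er+I)\otimes m\mapsto erm+IM$ as the paper, but by a different mechanism. The paper's proof is a one-line direct verification: it writes down the map $(r+I)\otimes m\mapsto rm+IM$ together with an explicit inverse $m+IM\mapsto (e+I)\otimes m$ (whose well-definedness uses $I\subseteq eR$, so that $x=ex$ for $x\in I$ and hence $(e+I)\otimes xm'=(x+I)\otimes m'=0$), leaving the routine checks to the reader. You instead tensor the short exact sequence of $eRe$-$R$-bimodules $0\to I\to eR\to eR/I\to 0$ with $M$, use right-exactness and the natural isomorphism $eR\otimes_R M\cong eM$ (the same identification of $eR\otimes_R-$ with $e(-)$ used in Corollary~\ref{c:rings.eRe}), and identify $(eR/I)\otimes_R M$ with the cokernel $eM/IM$; your remark that $I\otimes_R M\to eR\otimes_R M$ need not be injective, so one must work with its image rather than with $I\otimes_R M$ itself, is exactly the right precaution and is what makes the argument airtight. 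What your route buys is that well-definedness, surjectivity and injectivity all come for free from functoriality, with no inverse map to construct; what the paper's route buys is brevity and a completely elementary, self-contained formula-level proof. Note also that, like the paper, you only need $eReI\subseteq I$ (to get the $eRe$-structures); the stated equality is automatic since $e\in eRe$ and $eI=I$.
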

\begin{proof}
There is a well-defined $eRe$-module homomorphism $(eR/I)\otimes_R M\to eM/IM$ given by $(r+I)\otimes m\mapsto rm+IM$ for $r\in eR$ with inverse given by $m+IM\mapsto (e+I)\otimes m$ for $m\in eM$.
\end{proof}

As a corollary, we deduce the following.

\begin{Cor}\label{c:coind.rep.funct}
Let $M$ be a finite monoid, $e\in E(M)$ and $K$ a commutative ring.  Let $V$ be a $KM$-module and $W$ a $KG_e$-module.  Then there is a natural isomorphism $\Hom_{KM}(V,\Coind_e(W))\cong \Hom_{KG_e}(eV/R(e)V,W)$.
\end{Cor}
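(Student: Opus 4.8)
The plan is to combine the hom--tensor adjunction with Proposition~\ref{p:tensor.eR/k}. Recall that $\Coind_e(W)=\Hom_{KG_e}(KR_e,W)$, where $KR_e=KeM/KR(e)$ is regarded as a $KG_e$-$KM$-bimodule. Since $KR_e\otimes_{KM}-$ is left adjoint to $\Hom_{KG_e}(KR_e,-)$, the hom--tensor adjunction yields an isomorphism
\[\Hom_{KM}(V,\Coind_e(W))=\Hom_{KM}(V,\Hom_{KG_e}(KR_e,W))\cong \Hom_{KG_e}(KR_e\otimes_{KM}V,W),\]
natural in $V$ and $W$. It therefore remains to produce a natural isomorphism of $KG_e$-modules $KR_e\otimes_{KM}V\cong eV/R(e)V$.

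For this I would apply Proposition~\ref{p:tensor.eR/k} with $R=KM$, the idempotent $e$ (viewed as an idempotent of $KM$), and $I=KR(e)$. Since $R(e)$ is a right ideal of $M$ contained in $eM$, the $K$-submodule $I=KR(e)$ is a right ideal of $KM$ contained in $e(KM)=K[eM]$, and one has $e(KM)/I=K[eM]/KR(e)=KR_e$ while $IV=R(e)V$. The single hypothesis of Proposition~\ref{p:tensor.eR/k} to be checked is $eReI=I$, i.e. $K[eMe]\cdot KR(e)=KR(e)$. The inclusion $\supseteq$ is immediate because $e$ acts as the identity on $eM\supseteq R(e)$; for $\subseteq$ it suffices to observe that $eMe\cdot R(e)\subseteq R(e)$, which is the routine semigroup computation: if $f\in eMe$ and $x\in R(e)$, then $fx=e(fx)\in eM$ and $fxM\subseteq xM\subsetneq eM$ (the strict inclusion because $x\notin R_e$), so $fxM\neq eM$ and hence $fx\in R(e)$. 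Thus Proposition~\ref{p:tensor.eR/k} gives a natural isomorphism $KR_e\otimes_{KM}V\cong eV/R(e)V$ of $e(KM)e=K[eMe]$-modules.

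Restricting scalars along the inclusion $KG_e\hookrightarrow K[eMe]$ then turns this into an isomorphism of $KG_e$-modules, and one checks (again routinely, tracking the various module structures) that the resulting $KG_e$-structures are exactly those used above: the left $KG_e$-action on $KR_e\otimes_{KM}V$ is the restriction of the $K[eMe]$-action coming from the left $K[eMe]$-module structure on $KR_e$, and $eV/R(e)V$ is a $K[eMe]$-module whose restriction to $KG_e$ is the intended one. Composing the two displayed isomorphisms yields the desired natural isomorphism $\Hom_{KM}(V,\Coind_e(W))\cong\Hom_{KG_e}(eV/R(e)V,W)$. There is no real obstacle: the argument is entirely formal apart from the small verification $eMe\cdot R(e)\subseteq R(e)$, and the only thing requiring care is bookkeeping of the three module structures (over $KM$, $K[eMe]$, and $KG_e$) so that the adjunction is applied over $KG_e$ while the identification of the tensor product is carried out over $K[eMe]$ and then restricted.
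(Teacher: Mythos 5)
Your overall strategy is exactly the paper's: apply the hom--tensor adjunction to rewrite $\Hom_{KM}(V,\Hom_{KG_e}(KR_e,W))$ as $\Hom_{KG_e}(KR_e\otimes_{KM}V,W)$ and then identify $KR_e\otimes_{KM}V$ with $eV/R(e)V$ via Proposition~\ref{p:tensor.eR/k}, the whole burden being the hypothesis $eMe\cdot R(e)=R(e)$. The bookkeeping of the three module structures is fine. The problem is your verification of that hypothesis: the step ``$fxM\subseteq xM$'' is false. Left multiplication does not shrink the right ideal generated by an element; in general $fxM=\{fxm\mid m\in M\}$ need not be contained in $xM$. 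For instance in $T_2$ (maps acting on the left of $\{1,2\}$), with $\tau$ the transposition and $c_1,c_2$ the constant maps, one has $\tau c_1=c_2$ and $\tau c_1 M=\{c_2\}\not\subseteq\{c_1\}=c_1M$. So your ``routine semigroup computation'' does not go through as written.

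The inclusion $eMe\cdot R(e)\subseteq R(e)$ is nonetheless true, but it genuinely uses finiteness of $M$ (via stability, as the paper notes by citing \cite[Theorem~1.13]{repbook}); any argument that never invokes finiteness must be wrong, since the statement fails for the bicyclic monoid $B=\langle p,q\mid pq=1\rangle$ with $e=1$: there $q\in R(1)$ but $p\cdot q=1\in R_1$. The correct argument: suppose $f\in eMe$, $x\in R(e)$ and, for contradiction, $fxM=eM$. Then $e\in fxM\subseteq MxM$, so $MeM\subseteq MxM$, while $x\in eM$ gives $MxM\subseteq MeM$; hence $x\mathrel{\mathscr J}e$. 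Since $x=ex$ and $x\mathrel{\mathscr J}e$, stability of the finite monoid $M$ forces $xM=eM$, contradicting $x\notin R_e$. Thus $fx\in R(e)$. With this repair (the reverse inclusion $R(e)\subseteq eMe\cdot R(e)$ being immediate as you say), the rest of your argument is correct and coincides with the paper's proof.
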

\begin{proof}
First note that $eMeR(e)=R(e)$ by the stability of finite monoids~\cite[Theorem~1.13]{repbook}.
Recall that $\Coind_e(W) =\Hom_{KG_e}(eKM/KR(e),W)$.  Thus, by the hom-tensor adjunction and  Proposition~\ref{p:tensor.eR/k}, $\Hom_{KM}(V,\Coind_e(W))\cong \Hom_{KG_e}((eKM/KR(e))\otimes_{KM} V,W)\cong \Hom_{KG_e}(eV/R(e)V,W)$.
\end{proof}

We now give our alternate proof of Corollary~\ref{c:main.fd.coind}(4) for regular monoids.

\begin{Thm}\label{t:main.fd.coind.reg}
Let $M$ be a regular finite monoid with group completion $G(M)$ and $e\in E(M)$.  Let $K$ be a field whose characteristic does not divide the order of any maximal subgroup of $M$,  $V$ a finite dimensional $KG(M)$-module and $W$ a finite dimensional $KG_e$-module.
Then \[\Ext^n_{KM}(V,\Coind_e(W))\cong \Hom_{KG_e}(\til H_{n-1}(\Delta(\Omega(R(e))),K),V^\ast\otimes_K W)\] for all $n\geq 0$.
\end{Thm}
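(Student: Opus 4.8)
The plan is to feed the explicit projective resolution of Theorem~\ref{t:explicit.res.triv} into the functor $\Hom_{KM}(-,\Coind_e(W))$ and translate the resulting cochain complex into the relative simplicial chains of the $G_e$-poset pair $(\Omega(eM),\Omega(R(e)))$. Under the hypotheses here the characteristic of $K$ divides the order of no maximal subgroup of $M$, so Theorem~\ref{t:explicit.res.triv} tells us that $C_\bullet(\Delta(M/{\R}),V)\to V$ is a projective resolution of $V$ (alternatively, its terms are filtered by induced modules, hence $\Coind_e(W)$-acyclic by Proposition~\ref{p:homologically.orthogonal}); in either case $\Ext^n_{KM}(V,\Coind_e(W))$ is the $n$-th cohomology of the cochain complex $\Hom_{KM}(C_\bullet(\Delta(M/{\R}),V),\Coind_e(W))$.

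First I would invoke Corollary~\ref{c:coind.rep.funct} with the $KM$-module $C_q(\Delta(M/{\R}),V)$ to obtain, naturally in $q$,
\[\Hom_{KM}(C_q(\Delta(M/{\R}),V),\Coind_e(W))\cong \Hom_{KG_e}\big(eC_q(\Delta(M/{\R}),V)/R(e)C_q(\Delta(M/{\R}),V),\,W\big),\]
and the heart of the argument is to identify the $KG_e$-module on the right. Because $\psi(e)$ is an idempotent of the group $G(M)$ it equals $1$, so $e$ acts as the identity on the inflated module $V$; hence $eC_q(\Delta(M/{\R}),V)=\big(eC_q(\Delta(M/{\R}),K)\big)\otimes_K V$. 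A $q$-chain of $\Delta(M/{\R})$ is fixed by $e$ precisely when all of its vertices lie in $\Omega(eM)$, so $eC_q(\Delta(M/{\R}),K)=C_q(\Delta(\Omega(eM)),K)$, with the $G_e$-action coming from left multiplication of $G_e$ on $\Omega(eM)$. For the part generated by $R(e)$: for $a\in R(e)$ the operator $\psi(a)$ is invertible, so $a(\sigma\otimes v)=a\sigma\otimes\psi(a)v$ exhausts $(a\sigma)\otimes V$ as $v$ varies, while the chains $a\sigma$ span $C_q(\Delta(\Omega(aM)),K)$ --- this uses that $M$ acts cellularly on $M/{\R}$ (Proposition~\ref{p:action.cellular}) together with the chain-lifting property of cellular maps (\cite[Lemma~3.4]{ourmemoirs}), which is what upgrades the obvious inclusion $R(e)C_q(\Delta(M/{\R}),K)\subseteq C_q(\Delta(\Omega(R(e))),K)$ to an equality. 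Summing over $a\in R(e)$ and using that $V$ is a flat $K$-module, the quotient becomes $C_q(\Delta(\Omega(eM)),\Delta(\Omega(R(e))),K)\otimes_K V$ with the diagonal $G_e$-action; since all the identifications are natural, they assemble into an isomorphism of cochain complexes.

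Next I would apply the hom-tensor adjunction of Theorem~\ref{t:to.triv} to the group $G_e$ (with $V$ viewed as a $K$-projective $KG_e$-module via $\psi\colon G_e\to G(M)$) to rewrite
\[\Hom_{KG_e}\big(C_q(\Delta(\Omega(eM)),\Delta(\Omega(R(e))),K)\otimes_K V,\,W\big)\cong \Hom_{KG_e}\big(C_q(\Delta(\Omega(eM)),\Delta(\Omega(R(e))),K),\,\Hom_K(V,W)\big),\]
and then replace $\Hom_K(V,W)$ by $V^\ast\otimes_K W$, an isomorphism of $KG_e$-modules as $V$ is finite dimensional. Thus $\Ext^n_{KM}(V,\Coind_e(W))$ is the $n$-th cohomology of $\Hom_{KG_e}(C_\bullet(\Delta(\Omega(eM)),\Delta(\Omega(R(e))),K),V^\ast\otimes_K W)$. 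As the characteristic of $K$ does not divide $|G_e|$, the algebra $KG_e$ is semisimple, so $\Hom_{KG_e}(-,V^\ast\otimes_K W)$ is exact and this cohomology equals $\Hom_{KG_e}\big(H_n(\Delta(\Omega(eM)),\Delta(\Omega(R(e))),K),\,V^\ast\otimes_K W\big)$. Finally $\Omega(eM)$ has maximum $eM$, so $\Delta(\Omega(eM))$ is contractible, and the long exact sequence of the pair (exactly as in Proposition~\ref{p:go.to.sing} and the proof of Theorem~\ref{t:cohomology.coinduced}) furnishes a $KG_e$-module isomorphism $H_n(\Delta(\Omega(eM)),\Delta(\Omega(R(e))),K)\cong\til H_{n-1}(\Delta(\Omega(R(e))),K)$, giving the asserted formula.

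The step I expect to be the main obstacle is the middle one: establishing the equality $R(e)C_q(\Delta(M/{\R}),K)=C_q(\Delta(\Omega(R(e))),K)$ (rather than mere containment), which genuinely needs cellularity of the $M$-action together with the chain-lifting lemma, and the bookkeeping needed to confirm that every adjunction and every identification is $G_e$-equivariant, so that the final $\Ext$-group is recorded as the correct $KG_e$-module.
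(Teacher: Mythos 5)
Your argument reproduces the paper's core computation, but the step you lean on to get started has a hypothesis mismatch. The statement of Theorem~\ref{t:explicit.res.triv} that $C_\bullet(\Delta(M/{\R}),V)\to V$ is a \emph{projective} resolution requires, in addition to good characteristic, that the sandwich matrices of the $\mathscr J$-classes be right invertible over the corresponding group algebras; that hypothesis is not part of Theorem~\ref{t:main.fd.coind.reg}, so your primary route is not available. Your parenthetical fallback is the right idea but is also not a direct citation: Theorem~\ref{t:explicit.res.triv} asserts that the terms of $C_\bullet(\Delta(M/{\R}),K)$ are filtered by induced modules, not the terms of $C_\bullet(\Delta(M/{\R}),V)=C_\bullet(\Delta(M/{\R}),K)\otimes_K V$, so you cannot invoke Proposition~\ref{p:homologically.orthogonal} for the twisted complex without an extra argument. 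Two repairs work: (i) reduce to $V=K$ at the outset via Corollary~\ref{c:to.homology} and Proposition~\ref{p:hom.commute}, i.e.\ $\Ext^n_{KM}(V,\Coind_e(W))\cong \Ext^n_{KM}(K,\Coind_e(\Hom_K(V,W)))$, which is exactly what the paper does and makes the rest of your computation literally the paper's; or (ii) keep $V$ in the resolution but first prove a projection formula $\Ind_f(U)\otimes_K V\cong \Ind_f(U\otimes_K V)$ (the map $(\ell\otimes u)\otimes v\mapsto \ell\otimes(u\otimes \psi(\ell)\inv v)$ works because $\psi(\ell)$ acts invertibly on $V$), so that the filtration by induced modules in the $K$-coefficient case tensors to one in the $V$-coefficient case; alternatively deduce acyclicity of the twisted terms from Theorem~\ref{t:to.triv} together with Proposition~\ref{p:hom.commute} and Proposition~\ref{p:homologically.orthogonal}.

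Once past that point, your proof agrees with the paper's: Corollary~\ref{c:coind.rep.funct}, the identification of $e(\cdot)/R(e)(\cdot)$ with the relative chains $C_\bullet(\Delta(\Omega(eM)),\Delta(\Omega(R(e))),K)$ using cellularity of the $M$-action (Proposition~\ref{p:action.cellular}) and the chain-lifting lemma to upgrade the containment $R(e)C_q\subseteq C_q(\Delta(\Omega(R(e))),K)$ to equality, exactness of $\Hom_{KG_e}(-,V^\ast\otimes_K W)$ by semisimplicity, and the long exact sequence of the pair with $\Delta(\Omega(eM))$ contractible. You also correctly singled out the chain-lifting equality as the crux, and your bookkeeping of the $G_e$-equivariance (diagonal action, $\psi(e)=1$ so $e$ acts trivially on $V$, invertibility of $\psi(a)$ used to absorb the twist) is sound; the only genuine gap is the justification of the resolution/acyclicity at the first step.
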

\begin{proof}
Using Corollary~\ref{c:to.homology} and Proposition~\ref{p:hom.commute}, it is enough to prove this when $V$ is the trivial module, that is, to prove $\Ext^n_{KM}(K,\Coind_e(W))\cong  \Hom_{KG_e}(\til H_{n-1}(\Delta(\Omega(R(e))),K),W)$ for all $n\geq 0$.  By Proposition~\ref{p:homologically.orthogonal}, any finite dimensional module that is filtered by induced modules is $F$-acyclic for the functor $F(V) = \Hom_{KM}(-,\Coind_e(W))$.  Hence we can use the resolution from Theorem~\ref{t:explicit.res.triv} to compute $\Ext^n_{KM}(K,\Coind_e(V))$, that is, $\Ext^n_{KM}(K,\Coind_e(V)) \cong H^n(\Hom_{KM}(C_\bullet(\Delta(M/{\R}),K),\Coind_e(W))$.  We have that \[\Hom_{KM}(C_q(\Delta(M/{\R}),K),\Coind_e(W))\cong \Hom_{KG_e}\left(\frac{eC_q(\Delta(M/{\R}),K)}{R(e)C_q(\Delta(M/{\R}),K)},W\right)\] by Corollary~\ref{c:coind.rep.funct}.  We claim that \[eC_q(\Delta(M/{\R}),K)/R(e)C_q(\Delta(M/{\R}),K)\cong C_q(\Delta(\Omega(eM)),\Delta(\Omega(R(e))),K)\] as $KG_e$-modules.  Indeed, trivially $eC_q(\Delta(M/{\R}),K)= C_q(\Delta(\Omega(eM)),K)$ since $e$ retracts $\Delta(M/{\R})$ onto $\Delta(\Omega(eM))$.  It is also clear that if $m\in R(e)$, then $m\sigma$ is a simplex of $\Delta(\Omega(R(e))$ for any simplex $\sigma$.  Conversely, if $\sigma$ is a $q$-simplex of $\Omega(R(e))$ with maximum element $m\in R(e)$, then there is a $q$-simplex $\tau$ with maximum element $1$ with $m\tau=\sigma$ by~\cite[Lemma~3.4]{ourmemoirs} since the action of $m$ is cellular by Proposition~\ref{p:action.cellular}.  Thus $R(e)C_q(\Delta(M/{\R}),K) = C_q(\Delta(\Omega(R(e))),K)$, and so we have isomorphisms commuting with the boundary maps \[eC_q(\Delta(M/{\R}),K)/R(e)C_q(\Delta(M/{\R}),K)\cong C_q(\Delta(\Omega(eM)),\Delta(\Omega(R(e))),K).\]

Since $KG_e$ is semisimple, $W$ is an injective $KG_e$-module.  Therefore,
\begin{align*}
\Ext^n_{KM}(K,\Coind_e(V)) &\cong H^n(\Hom_{KM}(C_\bullet(\Delta(M/{\R}),K),\Coind_e(W))\\
&\cong H^n(\Hom_{KG_e}(C_\bullet(\Delta(\Omega(eM)),\Delta(\Omega(R(e))),K),W)\\
&\cong \Hom_{KG_e}(H_n(\Delta(\Omega(eM)),\Delta(\Omega(R(e))),K),W)\\ &\cong  \Hom_{KG_e}(\til H_{n-1}(\Delta(\Omega(R(e))),K),W)
\end{align*}
with the last isomorphism coming via the connecting homomorphism from the long exact sequence in relative homology (which is natural, and hence a $KG_e$-module homomorphism), as $\Delta(\Omega(eM))$ is contractible because $\Omega(eM)$ has maximum element $eM$.  This completes the proof.
\end{proof}
\def\malce{\mathbin{\hbox{$\bigcirc$\rlap{\kern-7.75pt\raise0,50pt\hbox{${\tt
  m}$}}}}}\def\cprime{$'$} \def\cprime{$'$} \def\cprime{$'$} \def\cprime{$'$}
  \def\cprime{$'$} \def\cprime{$'$} \def\cprime{$'$} \def\cprime{$'$}
  \def\cprime{$'$} \def\cprime{$'$}


\end{document}